 \newtheorem{theorem}{Theorem}[section]
 \newtheorem{corollary}[theorem]{Corollary}
 \newtheorem{lemma}[theorem]{Lemma}
 \newtheorem{problem}[theorem]{Problem}
 \newtheorem{remark}[theorem]{Remark}
 \newtheorem{example}[theorem]{Example}
 \numberwithin{equation}{section}
\begin{document}

\title[Partial linearity and log-convexity]
 {concavity property of minimal $L^{2}$ integrals with Lebesgue measurable gain \uppercase\expandafter{\romannumeral8}: Partial linearity and log-convexity}

\author{Shijie Bao}
\address{Shijie Bao: Institute of Mathematics, Academy of Mathematics and Systems Science, Chinese Academy of Sciences, Beijing 100190, China}
\email{bsjie@amss.ac.cn}

\author{Qi'an Guan}
\address{Qi'an Guan: School of
Mathematical Sciences, Peking University, Beijing 100871, China.}
\email{guanqian@math.pku.edu.cn}

\author{Zheng Yuan}
\address{Zheng Yuan: Institute of Mathematics, Academy of Mathematics and Systems
	Science, Chinese Academy of Sciences, Beijing 100190, China.}
\email{yuanzheng@amss.ac.cn}

\thanks{}

\subjclass[2020]{32D15, 32U05, 26A51, 32L10, 32W05}

\keywords{minimal $L^2$ integral, plurisubharmonic function, concavity, partial linearity}

\date{\today}

\dedicatory{}

\commby{}


\begin{abstract}
In this article, we give some necessary conditions for the concavity property of minimal $L^2$ integrals degenerating to partial linearity, a characterization for the concavity degenerating to partial linearity for open Riemann surfaces, and some relations between the concavity property for minimal $L^2$ integrals and the log-convexity for Bergman kernels.
\end{abstract}

\maketitle

\section{Introduction}

Let $D\subset\mathbb{C}^n$ be a pseudoconvex domain containing the origin $o\in \mathbb{C}^n$, and let $\psi<0$ and $\varphi+\psi$ be  plurisubharmonic functions on $D$. Let $f_0$ be a holomorphic function near $o$. Let us recall  the minimal $L^2$ integrals (see \cite{guan_sharp})
$$G(t):=\inf\left\{\int_{\{\psi<-t\}}|F|^2e^{-\varphi}:F\in\mathcal{O}(\{\psi<-t\})\,\&\,(F-f_0,o)\in\mathcal{I}(\varphi+\psi)_o\right\},$$
where $t\ge0$ and $\mathcal{I}(\varphi+\psi)$ is the multiplier ideal sheaf, which was defined as the sheaf of germs of holomorphic functions $f$ such that $|f|^2e^{-\varphi-\psi}$ is locally integrable (see e.g. \cite{Tian,Nadel,Siu96,DEL,DK01,DemaillySoc,DP03,Lazarsfeld,Siu05,Siu09,DemaillyAG,Guenancia}).
 
In \cite{guan_sharp}, Guan established a concavity property for the minimal $L^{2}$ integrals $G(t)$, and a sharp version of Guan-Zhou's effectiveness result \cite{GZeff} of the strong openness property \cite{GZSOC}.

\begin{theorem}[\cite{guan_sharp}]
	\label{thm:G-concavity}If $G(0)<+\infty$, then $G(-\log r)$ is concave with respect to $r\in(0,1]$.
\end{theorem}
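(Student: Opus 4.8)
The plan is to reduce the concavity of $r\mapsto G(-\log r)$ on $(0,1]$ to a single interpolation inequality between three levels, and then to establish that inequality by an optimal $L^2$ extension. Writing $t=-\log r$, the standard characterization of concavity through monotonicity of difference quotients shows that concavity in $r$ is equivalent to the statement that for all $0\le t_1<t_2<t_3$ in the range where $G$ is finite one has $\frac{G(t_1)-G(t_2)}{e^{-t_1}-e^{-t_2}}\le\frac{G(t_2)-G(t_3)}{e^{-t_2}-e^{-t_3}}$. Clearing the (positive) denominators, this rearranges to
\[
G(t_1)\bigl(e^{-t_2}-e^{-t_3}\bigr)+G(t_3)\bigl(e^{-t_1}-e^{-t_2}\bigr)\le G(t_2)\bigl(e^{-t_1}-e^{-t_3}\bigr),
\]
that is, the value of $G$ on the \emph{largest} sublevel set $\{\psi<-t_1\}$ must lie on or below the line through the two points coming from the smaller levels $t_2,t_3$ (a concave function lies below the extension of each chord beyond its endpoints). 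So I would aim to bound $G(t_1)$ from above by this explicit chord expression in $G(t_2)$ and $G(t_3)$.

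First I would record the Hilbert-space structure. For each $t$ with $G(t)<+\infty$, the admissible set $\{F:(F-f_0,o)\in\mathcal{I}(\varphi+\psi)_o\}$ is a closed affine subspace of the weighted Bergman space $A^2(\{\psi<-t\},e^{-\varphi})$, so the infimum defining $G(t)$ is attained by a unique extremal function $F_t$, obtained as the orthogonal projection onto that affine subspace. This makes ``the minimizer'' meaningful and lets me feed concrete candidates into the variational problem at the larger level.

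The core step, and the main obstacle, is the extension estimate with the \emph{sharp} constant. I would build a candidate on the large set $\{\psi<-t_1\}$ out of the extremal data at the smaller levels by multiplying $F_{t_2}$ (or a suitable combination of $F_{t_2}$ and $F_{t_3}$) by a cutoff in the level variable $\psi$ and correcting it to a holomorphic function through H\"ormander--Demailly $L^2$ estimates for $\bar\partial$, using a plurisubharmonic weight assembled from $\varphi$, $\psi$, and auxiliary convex functions $b(\psi),v(\psi)$ chosen as solutions of suitable ODEs in the level parameter. The whole difficulty is that only the optimal constant --- the sharp ``$1$'' of the B\l ocki and Guan--Zhou optimal Ohsawa--Takegoshi extension theorem, rather than any larger universal constant --- produces exactly the chord bound above and hence concavity; a lossy constant would give only coarse regularity. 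Achieving it requires choosing the cutoff and the functions $b,v$ so that the error terms from $\bar\partial(\text{cutoff})$ are absorbed precisely, and simultaneously forcing the $\bar\partial$-correction to vanish to the order dictated by $\mathcal{I}(\varphi+\psi)_o$ at $o$, so that the corrected function still meets the germ constraint; I would arrange the latter by inserting a sufficiently singular factor concentrated at $o$ in the weight.

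Finally, since $\psi$ is only plurisubharmonic and may equal $-\infty$, I would carry out the construction after regularizing $\psi$ and exhausting $D$ by smooth pseudoconvex subdomains, prove the estimate with constants independent of the approximation, and pass to the limit. Optimizing over the auxiliary parameters $b,v$ and letting the spacing of the levels tend to zero would then upgrade the discrete three-point inequality to the full concavity of $G(-\log r)$ on $(0,1]$.
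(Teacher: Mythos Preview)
This theorem is cited from \cite{guan_sharp} and is not proved in the present paper, so there is no in-paper proof to compare your proposal against. That said, the machinery the paper does record---Lemma~\ref{dbarequa} and its deployment in the proof of Theorem~\ref{partiallylinear}---is precisely the engine behind the original argument in \cite{guan_sharp}, and your outline matches it: take the minimizer $F_{t_0}$ on a small sublevel set, multiply by a cutoff $1-b_{t_0,B}(\psi)$, solve $\bar\partial$ with the twisted weight $e^{-\varphi-\psi+v_{t_0,B}(\psi)}$ (the auxiliary convex functions $b,v$ you anticipated), and the sharp constant in the resulting estimate delivers exactly the chord bound. One minor correction: you do not need to combine two minimizers $F_{t_2}$ and $F_{t_3}$. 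A single extension of $F_{t_0}$ to $\{\psi<-t_1\}$ already gives, after letting the auxiliary width $B\to 0$, the monotonicity of the difference quotient $\dfrac{G(t_1)-G(t_0)}{e^{-t_1}-e^{-t_0}}$ in $t_1$, which is concavity.
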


Applying the concavity  (Theorem \ref{thm:G-concavity}), Guan  obtained a proof of Saitoh's conjecture for conjugate Hardy $H^2$ kernels \cite{Guan2019} and a sufficient and necessary condition of the existence of decreasing equisingular approximations with analytic singularities for the multiplier ideal sheaves with weights $\log(|z_{1}|^{a_{1}}+\cdots+|z_{n}|^{a_{n}})$ \cite{guan-20}.

After that, Guan  \cite{G2018} (see also \cite{GM}) presented a concavity property on Stein manifolds with smooth gain (the weakly pseudoconvex K\"{a}hler case was obtained by Guan-Mi \cite{GM_Sci}),
which was applied by Guan-Yuan to obtain an optimal support function related to the strong openness property \cite{GY-support} and an effectiveness result of the strong openness property in $L^p$ \cite{GY-lp-effe} .

In \cite{GY-concavity}, Guan-Yuan obtained the concavity property on Stein manifolds with Lebesgue measurable gain (the weakly pseudoconvex K\"{a}hler case was obtained by Guan-Mi-Yuan \cite{GMY}, see also \cite{GMY-boundary2,GMY-boundary3}),
which was applied to deduce a twisted $L^p$ version of the strong openness property \cite{GY-twisted}.

Let us recall the concavity property of minimal $L^2$ integrals on weakly pseudoconvex K\"{a}hler manifolds with Lebesgue measurable gain (see \cite{GMY,GMY-boundary2,GMY-boundary3}). 
Let $M$ be an $n-$dimensional complex manifold. Let $X$ and $Z$ be closed subsets of $M$. A triple $(M,X,Z)$ satisfies condition $(A)$, if the following statements hold:

(1). $X$ is a closed subset of $M$ and $X$ is locally negligible with respect to $L^2$ holomorphic functions, i.e. for any coordinated neighborhood $U\subset M$ and for any $L^2$ holomorphic function $f$ on $M\setminus X$, there exists an $L^2$ holomorphic function $\tilde{f}$ on $U$ such that $\tilde{f}|_{U\setminus X}=f$ with the same $L^2$ norm;

(2). $Z$ is an analytic subset of $M$ and $M\setminus(X\cup Z)$ is a weakly pseudoconvex K\"{a}hler manifold.

\

Let $(M,X,Z)$ be a triple satisfying condition $(A)$. Let $K_M$ be the canonical line bundle on $M$. Let $\psi$ be a plurisubharmonic function on $M$, and let $\varphi$ be a Lebesgue measurable function on $M$ such that $\varphi+\psi$ is a plurisubharmonic function on $M$. Denote $T=-\sup_M\psi$.

Recall the concept of ``gain" in \cite{GMY}. A positive measurable function $c$ on $(T,+\infty)$ is in the class $\mathcal{P}_{T,M,\varphi,\psi}$ (so-called "gain") if the following two statements hold:

(1). $c(t)e^{-t}$ is decreasing with respect to $t$;

(2). there is a closed subset $E$ of $M$ such that $E\subset Z\cap\{\psi=-\infty\}$ and for any compact subset $K\subset M\setminus E$, $e^{-\varphi}c(-\psi)$ has a positive lower bound on $K$.

\

Let $Z_0$ be a subset $E$ of $M$ such that $Z_0\cap\text{Supp} (\mathcal{O}/\mathcal{I}(\varphi+\psi))\neq\emptyset$. Let $U\supset Z_0$ be an open subset of $M$, and let $f$ be a holomorphic $(n,0)$ form on $U$. Let $\mathcal{F}_{z_0}\supset\mathcal{I}(\varphi+\psi)_{z_0}$ be an ideal of $\mathcal{O}_{z_0}$ for any $z_0\in Z_0$.

Denote that
\begin{flalign}\nonumber
	\begin{split}
		G(t;\varphi,\psi,c):=\inf\bigg\{\int_{\{\psi<-t\}}|\tilde{f}|^2e^{-\varphi}c(-\psi) : \tilde{f}\in H^0(\{\psi<-t\}, \mathcal{O}(K_M))&\\
		\& (\tilde{f}-f)\in H^0(Z_0, (\mathcal{O}(K_M)\otimes\mathcal{F})|_{Z_0})\bigg\}&,
	\end{split}
\end{flalign}
and
\begin{flalign}\nonumber
	\begin{split}
		\mathcal{H}^2(c,t,\varphi,\psi):=\bigg\{\tilde{f} : \int_{\{\psi<-t\}}|\tilde{f}|^2e^{-\varphi}c(-\psi)<+\infty, \tilde{f}\in H^0(\{\psi<-t\}, \mathcal{O}(K_M))&\\
		\& (\tilde{f}-f)\in H^0(Z_0, (\mathcal{O}(K_M)\otimes\mathcal{F})|_{Z_0})\bigg\}&,
	\end{split}
\end{flalign}
where $t\in [T,+\infty)$, and $c$ is a nonnegative Lebesgue measurable function on $(T,+\infty)$. Here $|\tilde{f}|^2:=\sqrt{-1}^{n^2}\tilde{f}\wedge\bar{\tilde{f}}$ for any $(n,0)$ form $\tilde{f}$. And $(\tilde{f}-f)\in H^0(Z_0, (\mathcal{O}(K_M)\otimes\mathcal{F})|_{Z_0})$ means that $(\tilde{f}-f,z_0)\in (\mathcal{O}(K_M)\otimes\mathcal{F})_{z_0}$ for any $z_0\in Z_0$. If there is no holomorphic $(n,0)$ form $\tilde{f}$ on $\{\psi<-t\}$ satisfying $(\tilde{f}-f)\in H^0(Z_0, (\mathcal{O}(K_M)\otimes\mathcal{F})|_{Z_0})$, we set $G(t;c)=+\infty$. For simplicity, we may denote $G(t;\varphi,\psi,c)$ by $G(t)$, $G(t;\varphi,\psi)$, $G(t;\psi)$, or $G(t;c)$, denote $\mathcal{P}_{T,M,\varphi,\psi}$ by $\mathcal{P}_{T,M}$, and denote $\mathcal{H}^2(c,t,\varphi,\psi)$ by $\mathcal{H}^2(c,t)$ if there are no misunderstandings.

 The following concavity for $G(t)$ was obtained by Guan-Mi-Yuan \cite{GMY} (see also \cite{GMY-boundary2,GMY-boundary3}).

\begin{theorem}[\cite{GMY}, see also \cite{GMY-boundary2,GMY-boundary3}]
\label{Concave}
	Let $c\in\mathcal{P}_{T,M}$ satisfy $\int_{T_1}^{+\infty}c(t)e^{-t}dt<+\infty$, where $T_1>T$. If there exists $t\in [T,+\infty)$ satisfying that $G(t)<+\infty$, then $G(h^{-1}(r))$ is concave with respect to $r\in (\int_{T_1}^Tc(t)e^{-t}\mathrm{d}t,\int_{T_1}^{+\infty}c(t)e^{-t}\mathrm{d}t)$, $\lim\limits_{t\rightarrow T+0}G(t)=G(T)$ and $\lim\limits_{t\rightarrow +\infty}G(t)=0$, where $h(t)=\int_{t}^{+\infty}c(t_1)e^{-t_1}\mathrm{d}t_1$.
\end{theorem}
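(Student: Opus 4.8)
The plan is to follow the established route for the concavity of minimal $L^2$ integrals: reduce to a regular situation, prove a \emph{sharp} Ohsawa--Takegoshi/Donnelly--Fefferman-type $L^2$ estimate comparing the optimal integrals at two nearby levels of $\psi$, optimize an auxiliary cutoff to reach the best constant, and convert the resulting inequality into concavity through the change of variables $r=h(t)$; the two limit assertions are handled separately by soft arguments. For the reductions, I would use condition $(A)$ to discard $X$ (negligible for $L^2$ holomorphic forms) and the analytic set $Z$, work on the weakly pseudoconvex K\"ahler manifold $M\setminus(X\cup Z)$, exhaust it by relatively compact weakly pseudoconvex pieces, and replace $\varphi$, $\psi$, $c$ by decreasing regularizations. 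The clause in the definition of $\mathcal{P}_{T,M}$ that $e^{-\varphi}c(-\psi)$ has a positive lower bound on compacta away from $E$ keeps the weights nondegenerate, so an estimate proved in the smooth, relatively compact setting is recovered in general by monotone limits; the constraint $(\tilde f-f)\in H^0(Z_0,(\mathcal{O}(K_M)\otimes\mathcal{F})|_{Z_0})$ is preserved under local uniform limits of holomorphic forms, which legitimizes every limiting step.

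The core is the sharp two-level estimate. Fixing $T\le t'<t''$ and a near-minimizer $F$ for $G(t'')$ on $\{\psi<-t''\}$, I would cut $F$ off by a function $\chi(\psi)$ whose transition sits \emph{inside} $\{\psi<-t''\}$, extend $\chi(\psi)F$ by zero to the larger set $\{\psi<-t'\}$, and solve $\bar\partial u=\bar\partial(\chi(\psi)F)$ there; then $\tilde f:=\chi(\psi)F-u$ is a holomorphic competitor for $G(t')$ still matching $f$ along $Z_0$, because the $\bar\partial$-source is supported away from $Z_0$ and the weight forces $u$ into the ideal. The decisive point is a \emph{sharp} $L^2$ bound on $u$. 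Since only $\varphi+\psi$, and not $\varphi$, is plurisubharmonic, I would twist by $\psi$ in the Donnelly--Fefferman manner, so that the positivity of $i\partial\bar\partial(\varphi+\psi)$ together with the convexity supplied by a chosen function of $\psi$ drives the Bochner--Kodaira--Nakano inequality; the gain enters through the weight $c(-\psi)$ and, crucially, through the hypothesis that $c(t)e^{-t}$ is decreasing.

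Choosing $\chi$ and the auxiliary weight as functions of $\psi$ keyed to the chosen levels, the sharp estimate furnishes a tight comparison between $G(t')$ and $G(t'')$ whose correction term is governed by $\int_{t'}^{t''}c(t)e^{-t}\,dt$; optimizing over $\chi$ reduces to a one-dimensional ODE whose solution produces exactly these integral weights, i.e. the increments of $h$. Applied across three levels this is equivalent to the three-point inequality $\big(\int_{t_3}^{t_1}c(t)e^{-t}dt\big)G(t_2)\ge\big(\int_{t_3}^{t_2}c(t)e^{-t}dt\big)G(t_1)+\big(\int_{t_2}^{t_1}c(t)e^{-t}dt\big)G(t_3)$ for $t_3<t_2<t_1$, which is precisely the chord inequality for $G\circ h^{-1}$ on the stated interval (an affine normalization of $h$ merely shifts $r$ and leaves concavity untouched). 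For $\lim_{t\to+\infty}G(t)=0$ I would restrict a fixed finite-norm competitor and apply dominated convergence, using that $\{\psi=-\infty\}$ is null and that $\int_{T_1}^{+\infty}c(t)e^{-t}dt<+\infty$; for $\lim_{t\to T+0}G(t)=G(T)$, monotonicity of $G$ gives one inequality while a normal-families limit of near-minimizers on $\{\psi<-T\}=\bigcup_{t>T}\{\psi<-t\}$, combined with lower semicontinuity of the weighted $L^2$ norm, gives the other.

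The main obstacle is the sharp estimate itself: obtaining the \emph{optimal} constant---not mere boundedness---on a manifold that is only weakly pseudoconvex K\"ahler, with $\varphi$ and $c$ merely Lebesgue measurable and with singularities along $X\cup Z$, and doing so uniformly enough both to survive all the approximations and to allow the optimizing ODE to be solved exactly. This is where the twisted Bochner--Kodaira--Nakano technique and the precise choice of weights must carry the argument; the remaining steps are essentially bookkeeping around this estimate.
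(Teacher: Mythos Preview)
This theorem is not proved in the present paper; it is quoted from \cite{GMY} (see also \cite{GMY-boundary2,GMY-boundary3}), so there is no ``paper's own proof'' to compare against directly. The paper does, however, import from \cite{GMY} the three technical ingredients that constitute the proof there---Lemma~\ref{dbarequa} (the sharp twisted $\bar\partial$-estimate), Lemma~\ref{F_t} (existence and orthogonality of minimizers), and Lemma~\ref{l-cont} (monotonicity and the two limit statements)---and your outline matches that architecture closely. In particular, Lemma~\ref{dbarequa} is exactly the sharp Ohsawa--Takegoshi/Donnelly--Fefferman-type estimate you describe: the cutoff is $1-b_{t_0,B}(\psi)$, the auxiliary twist is $v_{t_0,B}(\psi)$, and the optimized constant $\int_{t_1}^{t_0+B}c(s)e^{-s}\,\mathrm{d}s$ (i.e.\ an increment of $h$) appears on the nose, just as you anticipate from the one-variable ODE optimization.

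The only minor divergence is in how concavity is extracted from the estimate. In \cite{GMY} one does not write a three-point chord inequality; rather, one combines Lemma~\ref{dbarequa} with Lemma~\ref{F_t} (the manipulation is visible in the proof of Theorem~\ref{partiallylinear} in this paper, lines \eqref{p-pl-1}--\eqref{p-pl-7}) to show that the difference quotient
\[
\frac{G(t_1)-G(t_2)}{\int_{t_1}^{t_2}c(s)e^{-s}\,\mathrm{d}s}
\]
is monotone in the appropriate sense, which is the derivative form of concavity of $G\circ h^{-1}$. This is of course equivalent to your three-point formulation, so the difference is purely cosmetic. Your treatment of the two limits ($t\to T+0$ via normal families plus lower semicontinuity, $t\to+\infty$ via dominated convergence) is likewise the content of Lemma~\ref{l-cont}.
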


Guan-Mi-Yuan also obtained the following corollary of Theorem \ref{Concave}, which is a necessary condition for the concavity degenerating to linearity (some related results can be referred to \cite{GM,GY-concavity,xuzhou}).

\begin{corollary}[\cite{GMY}]\label{linear}
	Let $c\in\mathcal{P}_{T,M}$ satisfy $\int_{T_1}^{+\infty}c(t)e^{-t}dt<+\infty$, where $T_1>T$. If $G(t)\in (0,+\infty)$ for some $t\geq T$ and $G({h}^{-1}(r))$ is linear with respect to $r\in (0,\int_T^{+\infty}c(s)e^{-s}\mathrm{d}s)$, then there exists a unique holomorphic $(n,0)$ form $F$ on $M$ satisfying $(F-f)\in H^0(Z_0,(\mathcal{O}(K_M)\otimes\mathcal{F})|_{Z_0})$, and $G(t;c)=\int_{\{\psi<-t\}}|F|^2e^{-\varphi}c(-\psi)$ for any $t\geq T$.
	
	Furthermore, we have
	\begin{equation}\nonumber
		\int_{\{-t_2\leq\psi<-t_1\}}|F|^2e^{-\varphi}a(-\psi)=\frac{G(T_1;c)}{\int_{T_1}^{+\infty}c(t)e^{-t}\mathrm{d}t}\int_{t_1}^{t_2}a(t)e^{-t}\mathrm{d}t,
	\end{equation}
	for any nonnegative measurable function $a$ on $(T,+\infty)$, where $T\leq t_1<t_2\leq +\infty$.
\end{corollary}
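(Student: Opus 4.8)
The plan is to read off an explicit formula for $G$ from the linearity, then build a single global minimizing form by an equality-case (rigidity) analysis of the estimate behind Theorem \ref{Concave}, and finally upgrade the resulting mass identity from the gain $c$ to an arbitrary weight $a$ by a measure-theoretic argument.

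First I would exploit the two pieces of boundary data in Theorem \ref{Concave}. Writing $g(r):=G(h^{-1}(r))$, the hypothesis says $g$ is affine on $(0,\int_T^{+\infty}c(s)e^{-s}\mathrm{d}s)$; since $h(t)\to 0$ as $t\to+\infty$ while $G(t)\to 0$, the affine function $g$ has vanishing constant term, so $g(r)=\alpha r$ and hence
\[ G(t;c)=\alpha\,h(t)=\alpha\int_t^{+\infty}c(s)e^{-s}\mathrm{d}s,\qquad \alpha:=\frac{G(T_1;c)}{\int_{T_1}^{+\infty}c(s)e^{-s}\mathrm{d}s}. \]
The assumption $G(t)\in(0,+\infty)$ for some $t$ forces $\alpha\in(0,+\infty)$, so $G(t;c)\in(0,+\infty)$ for every $t\ge T$. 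For each such $t$ the space $\mathcal{H}^2(c,t)$ is a Hilbert space and the admissible forms form a closed affine subspace, so $G(t;c)$ is attained by a unique minimizer $F_t$ (this attainment is part of the proof of Theorem \ref{Concave}).

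The crux is to show that the $F_t$ are mutually compatible and descend from one global form. For $T\le t_1<t_2$, restricting $F_{t_1}$ to $\{\psi<-t_2\}$ gives a competitor for $G(t_2;c)$, and splitting the integral over the shell $\{-t_2\le\psi<-t_1\}$ yields the upper bound $\int_{\{-t_2\le\psi<-t_1\}}|F_{t_1}|^2e^{-\varphi}c(-\psi)\le G(t_1;c)-G(t_2;c)=\alpha\int_{t_1}^{t_2}c(s)e^{-s}\mathrm{d}s$. The reverse inequality is exactly the content of the fundamental $\bar\partial$-estimate underlying Theorem \ref{Concave}: concavity is proved by extending $F_{t_2}$ to $\{\psi<-t_1\}$ with controlled norm, and the linearity hypothesis forces that estimate to be an equality, making the correction term vanish, so that $F_{t_1}|_{\{\psi<-t_2\}}$ has norm exactly $G(t_2;c)$ and is therefore itself a minimizer; by uniqueness $F_{t_1}|_{\{\psi<-t_2\}}=F_{t_2}$. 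The compatible family $\{F_t\}_{t>T}$ then glues to a holomorphic $(n,0)$ form $F$ on $\{\psi<-T\}=\bigcup_{t>T}\{\psi<-t\}$ with $\int_{\{\psi<-t\}}|F|^2e^{-\varphi}c(-\psi)=G(t;c)$, and condition $(A)$ (negligibility of $X$, analyticity of $Z$, and finiteness of the weighted $L^2$ norm) lets me extend $F$ across the remaining set to a holomorphic $(n,0)$ form on all of $M$ still satisfying $(F-f)\in H^0(Z_0,(\mathcal{O}(K_M)\otimes\mathcal{F})|_{Z_0})$; uniqueness of $F$ follows from uniqueness of the Hilbert-space minimizers. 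I expect this equality-case step --- extracting from linearity that the $\bar\partial$-correction vanishes, hence that minimizers restrict to minimizers --- to be the main obstacle.

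Finally, with the global $F$ in hand the shell identity $\int_{\{-t_2\le\psi<-t_1\}}|F|^2e^{-\varphi}c(-\psi)=\alpha\int_{t_1}^{t_2}c(s)e^{-s}\mathrm{d}s$ holds for all $T\le t_1<t_2\le+\infty$. To pass to an arbitrary nonnegative measurable $a$, I would push the measure $|F|^2e^{-\varphi}$ forward under $-\psi$ to a Borel measure $\nu$ on $(T,+\infty)$, so that $\int_{\{-\psi\in B\}}|F|^2e^{-\varphi}g(-\psi)=\int_B g\,\mathrm{d}\nu$ for every nonnegative Borel $g$. Taking $g=c\,\mathbf{1}_{(t_1,t_2]}$, the shell identity says the $\sigma$-finite measures $c\,\mathrm{d}\nu$ and $\alpha c(s)e^{-s}\mathrm{d}s$ agree on every interval $(t_1,t_2]$; since intervals form a generating $\pi$-system, uniqueness of measures gives $c\,\mathrm{d}\nu=\alpha c(s)e^{-s}\mathrm{d}s$, and because $c>0$ everywhere on $(T,+\infty)$ (as $c$ is a gain) this yields $\mathrm{d}\nu=\alpha e^{-s}\mathrm{d}s$. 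Integrating $a$ against $\nu$ then gives $\int_{\{-t_2\le\psi<-t_1\}}|F|^2e^{-\varphi}a(-\psi)=\alpha\int_{t_1}^{t_2}a(s)e^{-s}\mathrm{d}s$, which is the asserted formula.
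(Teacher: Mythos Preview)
Your overall strategy is correct and matches the paper's approach (more precisely, the paper does not reprove this cited corollary but proves the partial-linearity Theorem \ref{partiallylinear}, whose proof is the template). Two remarks are worth making.

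First, your rigidity step---``linearity forces the estimate to be an equality, making the correction term vanish''---is the right idea but hides the one genuinely delicate point. In the paper's argument (see the proof of Theorem \ref{partiallylinear}) one does not get vanishing of the $\bar\partial$-error directly; instead, after passing to the limit $B\to0$ in Lemma \ref{dbarequa} and chasing equalities through a chain of inequalities, what survives is
\[
\int_{\{\psi<-t\}}|\tilde F-F_t|^2e^{-\varphi-\psi}\bigl(e^{-t}c(t)-e^{\psi}c(-\psi)\bigr)=0,
\]
and one then uses that $c(s)e^{-s}$ is not eventually constant (because $\int c(s)e^{-s}\,\mathrm{d}s<\infty$) to find a deeper sublevel set on which the bracket is bounded below, forcing $\tilde F=F_t$ there and hence everywhere by analytic continuation. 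Your sketch should be read as a summary of exactly this mechanism.

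Second, your treatment of the final identity is genuinely cleaner than the paper's. The paper works by hand: it reduces to $a=\mathbb{I}_E$, then approximates $E$ from inside and outside by compacta and open sets, while separately handling the at-most-countable discontinuity set of $c$. Your pushforward argument---observing that $c\,\mathrm{d}\nu$ and $\alpha\,c(s)e^{-s}\,\mathrm{d}s$ agree on a $\pi$-system, invoking uniqueness of $\sigma$-finite measures, and dividing by $c>0$---accomplishes the same thing in a few lines. The only point to check (which you should make explicit) is $\sigma$-finiteness of $\nu$: this follows because $c(s)e^{-s}$ decreasing and $c>0$ give a positive lower bound for $c$ on every bounded interval $(t_1,t_2]$, so $\nu((t_1,t_2])$ is controlled by the finite quantity $\int_{\{-t_2\le\psi<-t_1\}}|F|^2e^{-\varphi}c(-\psi)$.
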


Thus, there is a natural problem, which was posed in \cite{GY-concavity3}:

\begin{problem}[\cite{GY-concavity3}]\label{Q:chara}
How to characterize the concavity property degenerating to linearity?
\end{problem}

For  open Riemann surfaces, Guan-Yuan \cite{GY-concavity} gave an answer to Problem \ref{Q:chara} for single point, i.e. a characterization for the concavity $G(h^{-1}(r);\varphi,\psi,c)$ degenerating to linearity, where the  weights $\varphi$ may not be subharmonic and the gain $c$ may not be smooth (the case of subharmonic weights and smooth gain was proved by Guan-Mi \cite{GM}). The characterization \cite{GY-concavity} was  used by Guan-Mi-Yuan to prove a weighted version of Suita conjecture for higher derivatives \cite{GMY}, and was  used by Guan-Yuan to prove a weighted version of Saitoh's conjecture \cite{GY-saitoh}.

After that, Guan-Yuan \cite{GY-concavity3} gave an answer to Problem \ref{Q:chara} for finite points on open Riemann surfaces, which was used to obtain a characterization of the holding of equality in optimal jets $L^2$ extension problem from analytic subsets to open Riemann surfaces. Some recent results on high-dimensional manifolds can be referred to \cite{GY-concavity4,BGY-concavity5,BGY-concavity6,BGMY-concavity7}.
 
In this article, we consider the case $G(h^{-1}(r))$ being partially linear, and give some necessary conditions for the concavity property of minimal $L^2$ integrals degenerating to partial linearity. For open Riemann surfaces, we give a characterization for the concavity degenerating to partial linearity. Finally, we discuss the relations between concavity property for minimal $L^2$ integrals and log-convexity for Bergman kernels.

\subsection{$G({h}^{-1}(r))$ being partially linear}

In this section, we discuss the case that the minimal $L^2$ integral $G({h}^{-1}(r))$ is partially linear.

The following theorem gives a necessary condition for $G({h}^{-1}(r))$ is linear with respect to $r\in [\int_{T_2}^{+\infty}c(s)e^{-s}\mathrm{d}s$, $\int_{T_1}^{+\infty}c(s)e^{-s}\mathrm{d}s]$.

\begin{theorem}\label{partiallylinear}
	Let $c\in\mathcal{P}_{T,M}$ satisfy $\int_{T_1}^{+\infty}c(t)e^{-t}dt<+\infty$, where $T_1>T$. Let $T_2\in (T_1,+\infty)$. If $G(t)\in (0,+\infty)$ for some $t\geq T$ and $G({h}^{-1}(r))$ is linear with respect to $r\in [\int_{T_2}^{+\infty}c(s)e^{-s}\mathrm{d}s$, $\int_{T_1}^{+\infty}c(s)e^{-s}\mathrm{d}s]$, then there exists a unique holomorphic $(n,0)$ form $F$ on $\{\psi<-T_1\}$ satisfying $(F-f)\in H^0(Z_0,(\mathcal{O}(K_M)\otimes\mathcal{F})|_{Z_0})$, and $G(t;c)=\int_{\{\psi<-t\}}|F|^2e^{-\varphi}c(-\psi)$ for any $t\in [T_1,T_2]$.
	
	Furthermore, we have
	\begin{equation}\label{a(t)}
		\int_{\{-t_2\leq\psi<-t_1\}}|F|^2e^{-\varphi}a(-\psi)=\frac{G(T_1;c)-G(T_2;c)}{\int_{T_1}^{T_2}c(t)e^{-t}\mathrm{d}t}\int_{t_1}^{t_2}a(t)e^{-t}\mathrm{d}t,
	\end{equation}
	for any nonnegative measurable function $a$ on $[T_1,T_2]$, where $T_1\leq t_1<t_2\leq T_2$.
\end{theorem}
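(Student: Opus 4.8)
The plan is to transplant the proof of Corollary \ref{linear} to the sublevel set $\{\psi<-T_1\}$, to localize its equality analysis to the bounded parameter range $[T_1,T_2]$, and then to read off the mass identity \eqref{a(t)} from a disintegration of $|F|^2e^{-\varphi}$ along the level sets of $-\psi$.

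First I would restrict everything to the open subset $M':=\{\psi<-T_1\}$. Since $Z_0\subset E\subset Z\cap\{\psi=-\infty\}\subset M'$, the data $f$, $Z_0$ and $(\mathcal{F}_{z_0})_{z_0\in Z_0}$ all survive on $M'$; the triple $(M',X\cap M',Z\cap M')$ again satisfies condition $(A)$ (the one point to check is that $\{\psi<-T_1\}$ is weakly pseudoconvex Kähler, for which the function $-\log(-T_1-\psi)$ is plurisubharmonic on $M'$ and blows up along $\{\psi=-T_1\}$, giving the required exhaustion together with the one on $M\setminus(X\cup Z)$), and $c|_{(T_1,+\infty)}\in\mathcal{P}_{T_1,M'}$. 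Because $\{\psi<-t\}\cap M'=\{\psi<-t\}$ for $t\ge T_1$, the minimal $L^2$ integral attached to $M'$ equals $G(t)$ for every $t\ge T_1$; thus I may assume $T_1$ is the top level, and the hypothesis reads that $G(h^{-1}(r))$ is affine on $[h(T_2),h(T_1)]$, where $h(T_1)=\int_{T_1}^{+\infty}c(s)e^{-s}\,\mathrm{d}s$.

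The core step is the equality discussion. For each $t\in[T_1,T_2]$ the competitor class $\mathcal{H}^2(c,t)$ is a nonempty closed affine subspace of a Hilbert space (nonempty since $G(T_1)<+\infty$), so $G(t)$ is realized by a unique minimizer $F_t$, exactly as in the proof of Theorem \ref{Concave}. Recall that in that proof the concavity of $G(h^{-1}(r))$ is obtained by manufacturing, for $T_1\le t'<t\le T_2$, a competitor at level $t'$ from $F_t$ by solving a $\bar\partial$-equation with the optimal $L^2$ estimate with Lebesgue-measurable gain after inserting a cut-off in the variable $\psi$. Affinity of $G(h^{-1}(r))$ on $[h(T_2),h(T_1)]$ forces all of these estimates to be equalities simultaneously for $t',t\in[T_1,T_2]$, and the equality case of the optimal estimate forces the $\bar\partial$-correction to vanish in the relevant sense, i.e. $F_t|_{\{\psi<-t'\}}=F_{t'}$ for all such pairs. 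Since $\bigcup_{t\in(T_1,T_2]}\{\psi<-t\}=\{\psi<-T_1\}$, these compatible minimizers glue to a single holomorphic $(n,0)$ form $F$ on $\{\psi<-T_1\}$ with $(F-f)\in H^0(Z_0,(\mathcal{O}(K_M)\otimes\mathcal{F})|_{Z_0})$ and $G(t;c)=\int_{\{\psi<-t\}}|F|^2e^{-\varphi}c(-\psi)$ for every $t\in[T_1,T_2]$; uniqueness of $F$ follows from the uniqueness of each $F_t$ (strict convexity of the $L^2$ norm). This is where I expect the main obstacle to lie: because $T_2<+\infty$, Corollary \ref{linear} cannot be quoted directly (its hypothesis demands affinity all the way up to $r=\int_{T}^{+\infty}c(s)e^{-s}\,\mathrm{d}s$), so the equality case of the optimal $L^2$ estimate must be re-extracted on the bounded range and the compatibility of the $F_t$ established by hand.

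Finally I would read off \eqref{a(t)}. Write $G(t)=\lambda h(t)+\mu$ on $[T_1,T_2]$ with $\lambda=(G(T_1;c)-G(T_2;c))/\int_{T_1}^{T_2}c(s)e^{-s}\,\mathrm{d}s$. For $T_1\le t_1<t_2\le T_2$, subtracting the integral representations of $G(t_1)$ and $G(t_2)$ gives
\[
\int_{\{-t_2\le\psi<-t_1\}}|F|^2e^{-\varphi}c(-\psi)=G(t_1)-G(t_2)=\lambda\int_{t_1}^{t_2}c(s)e^{-s}\,\mathrm{d}s .
\]
Let $\nu$ be the pushforward of the measure $|F|^2e^{-\varphi}$ on $\{\psi<-T_1\}$ under the map $-\psi$. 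The identity above says $\int_{[t_1,t_2)}c\,\mathrm{d}\nu=\lambda\int_{t_1}^{t_2}c(s)e^{-s}\,\mathrm{d}s$ for all subintervals, hence $c\,\mathrm{d}\nu=\lambda\,c(s)e^{-s}\,\mathrm{d}s$ on $[T_1,T_2]$; since $c>0$ this yields $\mathrm{d}\nu=\lambda\,e^{-s}\,\mathrm{d}s$. Integrating an arbitrary nonnegative measurable $a$ against $\nu$ then produces \eqref{a(t)}.
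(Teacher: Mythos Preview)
Your plan for the first half (existence and uniqueness of $F$) is the paper's strategy: apply Lemma~\ref{dbarequa} with cut-off width $B_j\to0$, and use affinity on $[h(T_2),h(T_1)]$ to force the resulting inequalities to be equalities. Two remarks. First, the detour through $M'=\{\psi<-T_1\}$ is unnecessary, since Lemma~\ref{dbarequa} already outputs an extension on the sublevel set. Second, your sentence ``the equality case of the optimal estimate forces the $\bar\partial$-correction to vanish'' hides the actual mechanism, which is slightly different from what you suggest. In the paper, one first shows the limit $\tilde F$ of the extensions satisfies $\int_{\{\psi<-T_1\}}|\tilde F|^2e^{-\varphi}c(-\psi)\le G(T_1)$, so $\tilde F=F_{T_1}$ by uniqueness of the minimizer. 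The identification $F_{T_1}|_{\{\psi<-t\}}=F_t$ then comes from a separate step: equality in the chain forces
\[
\int_{\{\psi<-t\}}|\tilde F-F_t|^2e^{-\varphi-\psi}\bigl(e^{-t}c(t)-e^{\psi}c(-\psi)\bigr)=0,
\]
and since $\int^{+\infty}c(s)e^{-s}\,ds<\infty$ guarantees $c(s)e^{-s}$ is \emph{strictly} smaller somewhere beyond $t$, the bracket is bounded below by a positive constant on some $\{\psi<-t'\}$, whence $\tilde F=F_t$ there and hence everywhere. This is the point you flagged as the obstacle; it is worth spelling out. (Also, your restriction should read $F_{t'}|_{\{\psi<-t\}}=F_t$ for $t'<t$, and once you have it no gluing is needed: take $F=F_{T_1}$.)

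Your argument for \eqref{a(t)} is genuinely different from the paper's and much cleaner. The paper reduces to $a=\mathbb{I}_E$, squeezes $E$ between compact and open sets, and compares Riemann-type sums over small intervals avoiding the countable discontinuity set of $c$; this takes about two pages. Your pushforward route---identify $c\,d\nu$ with $\lambda\,c(s)e^{-s}\,ds$ on $[T_1,T_2]$ (finite measures agreeing on the $\pi$-system of half-open intervals), then divide by the strictly positive $c$---yields $d\nu=\lambda e^{-s}\,ds$ in one line. The only thing to record is that $\nu|_{[T_1,T_2]}$ is finite: $c(s)e^{-s}$ decreasing gives $c(s)\ge c(T_2)e^{T_1-T_2}>0$ on $[T_1,T_2]$, so the known finiteness of $\int c\,d\nu=G(T_1)-G(T_2)$ controls $\nu$.
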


We give a remark on the $F$ in the above theorem.

\begin{remark}\label{tildec}
	Let $t\in [T_1,T_2)$, and let $\tilde{c}$ be a nonnegative measurable function on $[T,+\infty)$. If $\mathcal{H}^2(\tilde{c},t)\subset\mathcal{H}^2(c,t)$, and the holomorphic $(n,0)$ form $F$ in Theorem \ref{partiallylinear} satisfies
	\begin{equation}\nonumber
		\int_{\{\psi<-T_2\}}|F|^2e^{-\varphi}\tilde{c}(-\psi)=G(T_2;\tilde{c}),
	\end{equation}
	then
	\begin{equation}\nonumber
		G(t;\tilde{c})=\int_{\{\psi<-t\}}|F|^2e^{-\varphi}\tilde{c}(-\psi)=G(T_2;\tilde{c})+\frac{G(T_1;c)-G(T_2;c)}{\int_{T_1}^{T_2}c(s)e^{-s}\mathrm{d}s}\int_t^{T_2}\tilde{c}(s)e^{-s}\mathrm{d}s
	\end{equation}
	for $t\in[T_1,T_2]$.
\end{remark}

The following theorem is a sufficient condition for the minimal $L^2$ integral $G(h^{-1}(r))$ being partially linear.
\begin{theorem}\label{s-con}
Let $\psi<-T,\tilde{\psi}<-T$ be plurisubharmonic functions on $M$, and let $\varphi,\tilde{\varphi}$ be Lebesgue measurable functions on $M$ such that $\varphi+\psi,\tilde{\varphi}+\tilde{\psi}$ are plurisubharmonic functions on $M$. Let $Z_0$, $\mathcal{F}$, $\tilde{Z}_0$, $\tilde{\mathcal{F}}$ be as above with respect to $(\varphi,\psi),(\tilde{\varphi},\tilde{\psi})$.  Let $T_1,T_2\in [T,+\infty)$ with $T_1<T_2$. Let $c(t)\in\mathcal{P}_{T,M,\varphi,\psi}$ satisfying $\int_{T_2}^{+\infty}c(t)e^{-t}dt<+\infty$, $\tilde{c}(t)\in\mathcal{P}_{T,M,\tilde{\varphi},\tilde{\psi}}$ satisfying $\int_{T_2}^{+\infty}c(t)e^{-t}dt<+\infty$. Assume that

(1). $\tilde{\psi}<-T_1$ on $\{\psi<-T_1\}$, and $\tilde{\psi}<-T_2$ on $\{\psi<-T_2\}$;

(2). $\tilde{\psi}=\psi,\tilde{\varphi}=\varphi$ on $\{-T_2\leq\psi<-T_1\}$;

(3). $\tilde{Z}_0=Z_0$, $\mathcal{I}(\tilde{\psi}+\tilde{\varphi})|_{Z_0}$ $\subset\mathcal{I}(\psi+\varphi)|_{Z_0}$ and $\mathcal{F}|_{Z_0}\subset\tilde{\mathcal{F}}|_{Z_0}$;

(4). $\mathcal{H}^2(\tilde{c},t,\tilde{\varphi},\tilde{\psi})\supset\mathcal{H}^2(c,t,\varphi,\psi)$ for any $t\in [T_1,T_2]$.

If $G(\tilde{h}^{-1}(r);\tilde{\varphi},\tilde{\psi},\tilde{c})$ is linear with respect to $r\in [\int_{T_2}^{+\infty}\tilde{c}(s)e^{-s}\mathrm{d}s$, $\int_{T_1}^{+\infty}\tilde{c}(s)e^{-s}\mathrm{d}s]$, where $\tilde{h}(t)=\int_t^{+\infty}\tilde{c}(l)e^{-l}\mathrm{d}l$, and the holomorphic $(n,0)$ form $\tilde{F}$ in Theorem \ref{partiallylinear} which satisfies that $(\tilde{F}-f)\in H^0(\tilde{Z}_0,(\mathcal{O}(K_M)\otimes\tilde{\mathcal{F}})|_{\tilde{Z}_0})$ and
\begin{equation}\label{s-con1}
	\int_{\{\tilde{\psi}<-t\}}|\tilde{F}|^2e^{-\tilde{\varphi}}\tilde{c}(-\tilde{\psi})=G(t;\tilde{\varphi},\tilde{\psi},\tilde{c}), \ \forall t\in [T_1,T_2],
\end{equation}
also satisfies that
	\begin{equation}\nonumber
	\int_{\{\psi<-T_2\}}|F|^2e^{-\varphi}c(-\psi)=G(T_2;\varphi,\psi,c),
\end{equation}
Then $G(h^{-1}(r);\varphi,\psi,c)$ is linear with respect to $r\in [\int_{T_2}^{+\infty}c(s)e^{-s}\mathrm{d}s$, $\int_{T_1}^{+\infty}c(s)e^{-s}\mathrm{d}s]$, where $h(t)=\int_t^{+\infty}c(l)e^{-l}\mathrm{d}l$.
\end{theorem}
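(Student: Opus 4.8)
The plan is to prove that the form $\tilde F$ furnished by the linearity of the tilde-problem is \emph{simultaneously} the minimizer of the $(\varphi,\psi,c)$-problem at every level $t\in[T_1,T_2]$; once this is shown, the value $G(t;\varphi,\psi,c)=\int_{\{\psi<-t\}}|\tilde F|^2e^{-\varphi}c(-\psi)$ is affine in $r=h(t)$ and linearity follows. Concretely, I would first apply Theorem \ref{partiallylinear} to the tilde-data: since $G(\tilde h^{-1}(r);\tilde\varphi,\tilde\psi,\tilde c)$ is linear, it produces a holomorphic $(n,0)$ form $\tilde F$ on $\{\tilde\psi<-T_1\}$ realizing $G(t;\tilde\varphi,\tilde\psi,\tilde c)$ for all $t\in[T_1,T_2]$ and satisfying the distribution identity \eqref{a(t)} for the tilde-data. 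By hypothesis (1), $\{\psi<-T_1\}\subset\{\tilde\psi<-T_1\}$, so $\tilde F$ restricts to a holomorphic form on $\{\psi<-T_1\}$; the standing assumption $\int_{\{\psi<-T_2\}}|\tilde F|^2e^{-\varphi}c(-\psi)=G(T_2;\varphi,\psi,c)$ entails that $\tilde F$ lies in the feasible class of the $c$-problem, and since the side condition at $Z_0$ is level-independent (as $Z_0\subset\{\psi=-\infty\}\subset\{\psi<-t\}$), $\tilde F$ is admissible for the $(\varphi,\psi,c)$-problem on every $\{\psi<-t\}$, $t\in[T_1,T_2]$.

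Next I would use that a feasible form realizes $G(t)$ if and only if it is orthogonal, in $L^2(\{\cdot<-t\},e^{-(\text{weight})}(\text{gain}))$, to the linear space of admissible differences (holomorphic $g$ with $(g,z_0)$ in the relevant ideal and finite norm); here $\langle\cdot,\cdot\rangle$ denotes the polarization of $|\cdot|^2$. The crux is to promote the tilde-orthogonality to a \emph{gain-independent} annulus relation. Because $\tilde F$ realizes $G(t;\tilde\varphi,\tilde\psi,\tilde c)$ for all $t\in[T_1,T_2]$ at once, subtracting the orthogonality relations at two levels $t_1<t_2$ gives $\int_{\{-t_2\le\tilde\psi<-t_1\}}\langle\tilde F,\tilde g\rangle e^{-\tilde\varphi}\tilde c(-\tilde\psi)=0$ for every tilde-difference $\tilde g$ and every sub-annulus in $[T_1,T_2]$. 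Reading this as the vanishing on every subinterval of the push-forward under $-\tilde\psi$ of the complex measure $\langle\tilde F,\tilde g\rangle e^{-\tilde\varphi}\tilde c(-\tilde\psi)\,dV$, and using $\tilde c>0$ to strip $\tilde c$ and reinsert $c$, I obtain $\int_{\{-T_2\le\tilde\psi<-t\}}\langle\tilde F,\tilde g\rangle e^{-\tilde\varphi}c(-\tilde\psi)=0$ for all $t\in[T_1,T_2]$.

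Then I would transfer this to the $c$-problem. Given an admissible difference $g$ for the $c$-problem on $\{\psi<-t\}$, hypotheses (3)--(4) allow me to extend it to a tilde-difference $\tilde g$ on $\{\tilde\psi<-t\}$ coinciding with $g$ on $\{\psi<-t\}$; on the annulus hypothesis (2) gives $\psi=\tilde\psi$ and $\varphi=\tilde\varphi$, so the previous relation becomes $\int_{\{-T_2\le\psi<-t\}}\langle\tilde F,g\rangle e^{-\varphi}c(-\psi)=0$. Adding the orthogonality over $\{\psi<-T_2\}$ supplied by the hypothesis that $\tilde F$ realizes $G(T_2;\varphi,\psi,c)$, I get $\int_{\{\psi<-t\}}\langle\tilde F,g\rangle e^{-\varphi}c(-\psi)=0$ for every such $g$, i.e. $\tilde F$ realizes $G(t;\varphi,\psi,c)$ for all $t\in[T_1,T_2]$. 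Finally, splitting $\int_{\{\psi<-t\}}|\tilde F|^2e^{-\varphi}c(-\psi)$ at $T_2$ and evaluating the annulus part by \eqref{a(t)} for the tilde-data with $a=c$ (legitimate after the identification on the annulus) yields $G(t;\varphi,\psi,c)=G(T_2;\varphi,\psi,c)+\frac{G(T_1;\tilde c)-G(T_2;\tilde c)}{\int_{T_1}^{T_2}\tilde c(s)e^{-s}\mathrm{d}s}\int_t^{T_2}c(s)e^{-s}\mathrm{d}s$, which is affine in $r=h(t)$; hence $G(h^{-1}(r);\varphi,\psi,c)$ is linear on $[\int_{T_2}^{+\infty}c(s)e^{-s}\mathrm{d}s,\int_{T_1}^{+\infty}c(s)e^{-s}\mathrm{d}s]$.

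The main obstacle I anticipate is the gain-independence step: converting the vanishing of the $\tilde c$-weighted sub-annulus integrals at all levels into vanishing with the weight $c$. This needs a careful coarea/push-forward argument on the level sets of $\tilde\psi$ and is precisely where the simultaneous minimality of $\tilde F$ at \emph{all} $t\in[T_1,T_2]$ (rather than at a single level) is indispensable. Secondary points requiring care are the legitimacy of extending a $c$-difference to a $\tilde c$-difference via hypotheses (3)--(4), and the integrability/measurability needed to manipulate these integrals and to invoke the concavity of Theorem \ref{Concave} as a sanity check on the resulting affine profile.
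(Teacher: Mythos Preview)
Your strategy coincides with the paper's: both arguments first obtain $\tilde F$ from Theorem~\ref{partiallylinear}, then show that $\tilde F$ is simultaneously the minimizer of the $(\varphi,\psi,c)$-problem for every $t\in[T_1,T_2]$, and finally split the resulting integral at $T_2$ to read off affinity in $r=h(t)$. The difference lies only in how the ``annulus'' part is handled.

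The paper does not run your signed-measure/coarea argument directly. Instead it invokes Remark~\ref{tildec} (with hypothesis~(4)) once at the outset to replace $\tilde c$ by $c$ in the tilde-problem, obtaining that $\tilde F$ also realizes $G(t;\tilde\varphi,\tilde\psi,c)$ for all $t\in[T_1,T_2]$. From there it works purely with \emph{norms}: for any competitor $F_t$ of the $(\varphi,\psi,c)$-problem, hypothesis~(3) makes $F_t$ admissible for the tilde-problem, and the Pythagorean identity of Lemma~\ref{F_t} (applied at $t$ and at $T_2$ and subtracted) gives
\[
\int_{\{-T_2\le\tilde\psi<-t\}}|F_t|^2e^{-\tilde\varphi}c(-\tilde\psi)\ \ge\ \int_{\{-T_2\le\tilde\psi<-t\}}|\tilde F|^2e^{-\tilde\varphi}c(-\tilde\psi),
\]
which by hypothesis~(2) is exactly the annulus inequality you need. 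Adding the $\{\psi<-T_2\}$ hypothesis yields $\int_{\{\psi<-t\}}|F_t|^2e^{-\varphi}c(-\psi)\ge\int_{\{\psi<-t\}}|\tilde F|^2e^{-\varphi}c(-\psi)$, so $\tilde F$ is optimal; the final linearity computation via \eqref{a(t)} is identical to yours.

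Your orthogonality route is the polarization of this and is correct in principle, but the gain-stripping step you flag as the main obstacle is genuinely delicate with \emph{complex} measures: you need integrability of $\langle\tilde F,\tilde g\rangle e^{-\tilde\varphi}$ over sub-annuli without any gain factor to make the push-forward a finite measure before you can conclude from vanishing on all subintervals. The paper sidesteps this entirely by using Remark~\ref{tildec}, whose (rather lengthy) proof does the corresponding work once and for all with nonnegative integrands. If you keep your framing, the cleanest fix is to replace the inner-product identities by the Pythagorean identity of Lemma~\ref{F_t} at each level, which keeps everything nonnegative and makes the interval-to-gain conversion immediate.
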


Applying Theorem \ref{partiallylinear}, we obtain the following necessary condition for $\varphi+\psi$ when $G(h^{-1}(r))$ is partially linear.

\begin{theorem}\label{sp1}
	Let $(M,X,Z)$ be a triple satisfying condition $(A)$, and let $\psi$ be a plurisubharmonic function on $M$.  Let $\varphi$ be a Lebesgue measurable function on $M$ such that $\varphi+\psi$ is a plurisubharmonic function on $M$. Let $T_1,T_2\in [T,+\infty)$ with $T_1<T_2$. Let $c\in\mathcal{P}_{T,M}$ satisfy $\int_{T_2}^{+\infty}c(t)e^{-t}dt<+\infty$.  Assume that there exists $t\geq T$ such that $G(t)\in (0,+\infty)$.  If there exists a Lebesgue measurable function $\tilde{\varphi}$ on $M$ such that:
	
	(1). $\tilde{\varphi}+\psi$ is plurisubharmonic function on $M$;
	
	(2). $\tilde{\varphi}=\varphi$ on $\{\psi<-T_2\}$;
	
	(3). $\tilde{\varphi}\geq \varphi$ and $\tilde{\varphi}\not\equiv\varphi$ on the interior  of $\{-T_2\leq\psi<-T_1\}$ (assume that the interior is not empty);
	
	(4). $\lim\limits_{t\rightarrow T_1+0}\sup\limits_{\{-t\leq \psi<-T_1\}}(\tilde{\varphi}-\varphi)=0$;
	
	(5). $\tilde{\varphi}-\varphi$ is bounded on $\{\psi<-T_1\}$, \\
	then $G(h^{-1}(r))$ is not linear with respect to $r\in [\int_{T_2}^{+\infty}c(s)e^{-s}\mathrm{d}s,\int_{T_1}^{+\infty}c(s)e^{-s}\mathrm{d}s]$.
	
	Especially, if $\varphi+\psi$ is strictly plurisubharmonic at some $z_1$, where $z_1$ is a point in the interior of $\{-T_2\leq\psi<-T_1\}$, then $G(h^{-1}(r))$ is not linear with respect to $r\in [\int_{T_2}^{+\infty}c(s)e^{-s}\mathrm{d}s,\int_{T_1}^{+\infty}c(s)e^{-s}\mathrm{d}s]$.
\end{theorem}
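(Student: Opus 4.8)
The plan is to argue by contradiction, using Theorem \ref{partiallylinear} to extract a minimizer and then playing the modified weight $\tilde\varphi$ against the equidistribution of that minimizer. Write $h(t)=\int_t^{+\infty}c(s)e^{-s}\,\mathrm ds$ and $\tilde G(t):=G(t;\tilde\varphi,\psi,c)$, and suppose for contradiction that $G(h^{-1}(r))=G(h^{-1}(r);\varphi,\psi,c)$ is linear on $[h(T_2),h(T_1)]$. Since $G(t)\in(0,+\infty)$ for some $t$, Theorem \ref{partiallylinear} produces a holomorphic $(n,0)$ form $F$ on $\{\psi<-T_1\}$ with $(F-f)\in H^0(Z_0,(\mathcal O(K_M)\otimes\mathcal F)|_{Z_0})$, with $G(t;c)=\int_{\{\psi<-t\}}|F|^2e^{-\varphi}c(-\psi)$ for all $t\in[T_1,T_2]$, and with the equidistribution identity \eqref{a(t)}. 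Put $A:=G(T_1;c)$, $B:=G(T_2;c)$ and $C:=(A-B)/\int_{T_1}^{T_2}c(s)e^{-s}\,\mathrm ds$; by the positivity hypothesis $F\not\equiv0$, and since $G$ is decreasing in $t$ we have $A\ge B\ge0$.

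Next I would record the comparison between the two problems. As $\tilde\varphi\ge\varphi$, every admissible form has smaller $\tilde\varphi$-energy than $\varphi$-energy, so $\tilde G(t)\le G(t;c)$ for $t\in[T_1,T_2]$; and since $\tilde\varphi=\varphi$ on $\{\psi<-T_2\}$ and $Z_0\subset\{\psi=-\infty\}$, the two problems have the same admissible class and integrand for $t\ge T_2$, whence $\tilde G(T_2)=B$. Testing the $\tilde\varphi$-problem at level $T_1$ with $F$ gives
\[
\tilde G(T_1)\le \int_{\{\psi<-T_1\}}|F|^2e^{-\tilde\varphi}c(-\psi)=A-\int_{\{-T_2\le\psi<-T_1\}}|F|^2\bigl(e^{-\varphi}-e^{-\tilde\varphi}\bigr)c(-\psi).
\]
Because $\tilde\varphi+\psi$ and $\varphi+\psi$ are plurisubharmonic with $\tilde\varphi+\psi\ge\varphi+\psi$ and $\tilde\varphi+\psi\not\equiv\varphi+\psi$ on the interior, they differ on a set of positive measure there (two plurisubharmonic functions agreeing almost everywhere agree everywhere), so $e^{-\varphi}-e^{-\tilde\varphi}>0$ on a positive-measure set on which $F\neq0$; thus the subtracted integral is strictly positive and $\tilde G(T_1)<A$. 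In particular, if $A=B$ this already contradicts the monotonicity $\tilde G(T_1)\ge\tilde G(T_2)=B=A$, so from now on $A>B$ and $C>0$.

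The heart of the argument is then a lower bound for the one-sided derivative of $\tilde G$ at the right endpoint. Condition (5) makes $e^{\tilde\varphi-\varphi}$ bounded on $\{\psi<-T_1\}$, so $c\in\mathcal P_{T,M,\tilde\varphi,\psi}$ there and Theorem \ref{Concave} applies: $r\mapsto\tilde G(h^{-1}(r))$ is concave on $[h(T_2),h(T_1)]$. I claim that
\[
\liminf_{t\to T_1+0}\frac{\tilde G(T_1)-\tilde G(t)}{\int_{T_1}^{t}c(s)e^{-s}\,\mathrm ds}\ge C .
\]
Granting this, the left derivative of $\tilde G(h^{-1}(r))$ at $h(T_1)$ is $\ge C$, and for a concave function the chord slope dominates it, so $\tilde G(T_1)-B=\tilde G(T_1)-\tilde G(T_2)\ge C\int_{T_1}^{T_2}c(s)e^{-s}\,\mathrm ds=A-B$, i.e. $\tilde G(T_1)\ge A$, contradicting $\tilde G(T_1)<A$. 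To prove the claim I would take $\tilde F_1$ to be the minimizer of the $\tilde\varphi$-problem at level $T_1$, use it as a competitor at level $t>T_1$ to get $\tilde G(T_1)-\tilde G(t)\ge\int_{\{-t\le\psi<-T_1\}}|\tilde F_1|^2e^{-\tilde\varphi}c(-\psi)$, invoke condition (4) to replace $e^{-\tilde\varphi}$ by $e^{-\varphi}$ up to a factor $e^{-\varepsilon(t)}$ with $\varepsilon(t)=\sup_{\{-t\le\psi<-T_1\}}(\tilde\varphi-\varphi)\to0$, and finally compare the near-boundary $\varphi$-density of $\tilde F_1$ with that of $F$, for which \eqref{a(t)} (taken with the integrand-function equal to $c$) gives the exact ratio $C$ on every shell. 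I expect this last comparison to be the main obstacle: $\tilde F_1$ is not a priori equidistributed, so one must show that its $\varphi$-energy density on the shells $\{-t\le\psi<-T_1\}$ is asymptotically at least that of $F$ as $t\to T_1+0$. Heuristically this holds because the extra penalty $\tilde\varphi\ge\varphi$ in the bulk pushes the mass of $\tilde F_1$ toward the level $\{\psi=-T_1\}$, where the two weights coincide; making this rigorous is exactly where conditions (4) and (5) do the real work, the former supplying the boundary matching of the weights and the latter the uniform comparability of the two weighted norms needed to transfer competitors between the problems.

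Finally, for the last assertion I would construct $\tilde\varphi$ explicitly from the strict plurisubharmonicity at $z_1$. Choose a small coordinate ball $B\Subset$ interior of $\{-T_2\le\psi<-T_1\}$ centered at $z_1$ and a smooth function $\chi\ge0$ supported in $B$ with $\chi(z_1)>0$, and set $\tilde\varphi:=\varphi+\delta\chi$ for a small $\delta>0$. Strict plurisubharmonicity of $\varphi+\psi$ near $z_1$ gives a positive lower bound on its complex Hessian on a neighbourhood, which absorbs $\delta\,\sqrt{-1}\partial\bar\partial\chi$ for $\delta$ small, so $\tilde\varphi+\psi$ is plurisubharmonic, giving (1); since $\mathrm{supp}\,\chi$ is a compact subset of the interior of $\{-T_2\le\psi<-T_1\}$ and $\chi\ge0$ with $\chi\not\equiv0$, the remaining conditions (2)--(5) are immediate. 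Applying the first part of the theorem to this $\tilde\varphi$ yields the asserted non-linearity.
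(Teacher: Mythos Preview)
Your overall architecture matches the paper's exactly: assume linearity, extract the minimizer $F$ via Theorem \ref{partiallylinear}, show the chord slope of $\tilde G$ over $[T_1,T_2]$ is strictly smaller than $C$ (by testing with $F$), show the one-sided derivative of $\tilde G$ at $T_1$ is at least $C$, and contradict concavity. The construction of $\tilde\varphi$ in the strictly plurisubharmonic case is also the same.

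The one genuine gap is precisely the step you flag: comparing the $\varphi$-shell mass of your $\tilde F_1$ with that of $F$. Your heuristic (``the extra penalty pushes mass toward the boundary'') is not the mechanism the paper uses, and it is not clear that such a variational argument would go through. The paper instead closes this with a purely algebraic stroke, using the Pythagorean identity of Lemma \ref{F_t} applied to the \emph{$\varphi$-problem}. Since by Theorem \ref{partiallylinear} the same $F$ is the $\varphi$-minimizer on every sublevel $\{\psi<-s\}$ for $s\in[T_1,T_2]$, and since condition (5) makes $\int_{\{\psi<-T_1\}}|\tilde F_1|^2e^{-\varphi}c(-\psi)<+\infty$, Lemma \ref{F_t} gives
\[
\int_{\{\psi<-s\}}|\tilde F_1|^2e^{-\varphi}c(-\psi)=\int_{\{\psi<-s\}}|F|^2e^{-\varphi}c(-\psi)+\int_{\{\psi<-s\}}|\tilde F_1-F|^2e^{-\varphi}c(-\psi)
\]
for $s=T_1$ and $s=t$. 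Subtracting yields
\[
\int_{\{-t\le\psi<-T_1\}}|\tilde F_1|^2e^{-\varphi}c(-\psi)\ \ge\ \int_{\{-t\le\psi<-T_1\}}|F|^2e^{-\varphi}c(-\psi)=C\int_{T_1}^{t}c(s)e^{-s}\,\mathrm ds,
\]
which is exactly the shell comparison you need. Combined with your use of condition (4) to pass from $e^{-\tilde\varphi}$ to $e^{-\varphi}$ on the shrinking shells, this proves the claimed liminf bound and the contradiction follows as you wrote. So the missing idea is not analytic but orthogonality: use that $F$ minimizes the $\varphi$-integral on \emph{each} sublevel set in $[T_1,T_2]$, and apply the parallelogram law from Lemma \ref{F_t} twice.
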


Applying Theorem \ref{partiallylinear}, we obtain the following necessary condition for $\psi$ when $G(h^{-1}(r))$ is partially linear.

\begin{theorem}\label{sp2}
	Let $(M,X,Z)$ be a triple satisfying condition $(A)$, and let $\psi$ be a plurisubharmonic function on $M$. Let $\varphi$ be a Lebesgue measurable function on $M$ such that $\varphi+\psi$ is a plurisubharmonic function on $M$. Assume that there exists $t\geq T$ such that $G(t)\in (0,+\infty)$. Let $T_1,T_2\in [T,+\infty)$ with $T_1<T_2$. Let $c\in\mathcal{P}_{T,M}$ satisfy $\int_{T_2}^{+\infty}c(t)e^{-t}dt<+\infty$. If there exists a plurisubharmonic function $\tilde{\psi}$ on $M$ such that:
	
	(1). $\tilde{\psi}<-T_1$ on $\{\psi<-T_1\}$;
	
	(2). $\tilde{\psi}=\psi$ on $\{\psi<-T_2\}$;
	
	(3). $\tilde{\psi}\geq \psi$ and $\tilde{\psi}\not\equiv\psi$ on the interior  of $\{-T_2\leq\psi<-T_1\}$ (assume that the interior is not empty);
	
	(4). $\lim\limits_{t\rightarrow T_2-0}\sup\limits_{\{-T_2\leq \psi<-t\}}(\tilde{\psi}-\psi)=0$, \\
	then $G(h^{-1}(r))$ is not linear with respect to $r\in [\int_{T_2}^{+\infty}c(s)e^{-s}\mathrm{d}s,\int_{T_1}^{+\infty}c(s)e^{-s}\mathrm{d}s]$.
	
	Especially, if $\psi$ is strictly plurisubharmonic at some $z_1$, where $z_1$ is a point in the interior of $\{-T_2\leq\psi<-T_1\}$, then $G(h^{-1}(r))$ is not linear with respect to $r\in [\int_{T_2}^{+\infty}c(s)e^{-s}\mathrm{d}s,\int_{T_1}^{+\infty}c(s)e^{-s}\mathrm{d}s]$.
\end{theorem}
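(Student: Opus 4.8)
The plan is to argue by contradiction, invoking Theorem \ref{partiallylinear} to produce the minimizer together with its rigid mass distribution, and then to manufacture an auxiliary minimal $L^2$ problem on the sublevel sets of $\tilde{\psi}$ whose plurisubharmonicity is restored by a compensating change of weight. First I would dispose of the ``especially'' clause: if $\psi$ is strictly plurisubharmonic at a point $z_1$ in the interior of $\{-T_2\le\psi<-T_1\}$, choose a smooth $\chi\ge0$ supported in a small ball $B\Subset\{-T_2<\psi<-T_1\}$ with $\chi(z_1)>0$, and set $\tilde{\psi}:=\psi+\varepsilon\chi$; strict plurisubharmonicity of $\psi$ near $z_1$ absorbs $\varepsilon\,i\partial\bar\partial\chi$ for small $\varepsilon$, so $\tilde{\psi}$ is plurisubharmonic, and the location of $B$ makes hypotheses (1)--(4) hold. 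Thus the special case reduces to the general one.

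For the general statement, suppose toward a contradiction that $G(h^{-1}(r))$ is linear on $[\int_{T_2}^{+\infty}ce^{-s},\int_{T_1}^{+\infty}ce^{-s}]$. Theorem \ref{partiallylinear} then yields a unique holomorphic $(n,0)$ form $F$ on $\{\psi<-T_1\}$ with $(F-f)\in H^0(Z_0,(\mathcal{O}(K_M)\otimes\mathcal{F})|_{Z_0})$, with $G(t;c)=\int_{\{\psi<-t\}}|F|^2e^{-\varphi}c(-\psi)$ for $t\in[T_1,T_2]$, and with the distributional identity \eqref{a(t)}. Writing $g(s):=c(s)e^{-s}$ (which is nonincreasing) and $\nu:=|F|^2e^{-\varphi-\psi}$, identity \eqref{a(t)} with $a(s)=e^{s}$ says precisely that the push-forward $(-\psi)_\ast(\nu|_{\{-T_2\le\psi<-T_1\}})$ equals $C\,dt$ on $[T_1,T_2]$, where $C=(G(T_1)-G(T_2))/\int_{T_1}^{T_2}ce^{-s}$. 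Since $F\not\equiv0$ (as $G\in(0,+\infty)$) and both $\psi$ and $\varphi+\psi$ are finite almost everywhere on the annulus, $\nu$ is a.e.\ positive there.

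Next I would introduce the auxiliary weight $\hat\varphi:=\varphi+\psi-\tilde{\psi}$, chosen so that $\hat\varphi+\tilde{\psi}=\varphi+\psi$ is again plurisubharmonic (this is the device that legitimizes the modified problem even though $\varphi+\tilde{\psi}$ need not be plurisubharmonic), and consider $\hat G(t):=G(t;\hat\varphi,\tilde{\psi},c)$ with the same $c,Z_0,\mathcal{F}$. The key cancellation is $|F|^2e^{-\hat\varphi}c(-\tilde{\psi})=g(-\tilde{\psi})\,d\nu$; combined with $\tilde{\psi}\ge\psi$ and $g$ nonincreasing, it gives at the inner level $t=T_1$ (where $\{\tilde{\psi}<-T_1\}=\{\psi<-T_1\}$ by hypothesis (1)) the strict inequality $\int_{\{\psi<-T_1\}}|F|^2e^{-\hat\varphi}c(-\tilde{\psi})=\int g(-\tilde{\psi})\,d\nu>\int g(-\psi)\,d\nu=G(T_1)$, the strictness coming from $\tilde{\psi}>\psi$ on a set of positive $\nu$-measure (guaranteed by hypothesis (3) with the a.e.\ positivity of $\nu$ and strict monotonicity of $g$). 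On the other hand, hypothesis (2) gives $\hat\varphi=\varphi,\tilde{\psi}=\psi$ on $\{\psi<-T_2\}$, whence the $\hat\varphi,\tilde{\psi}$ problem coincides with the original one at level $T_2$, so $\hat G(T_2)=G(T_2)$ with $F$ optimal there; and since $e^{-\hat\varphi}\ge e^{-\varphi}$ forces an inclusion of admissible classes on the common domain $\{\psi<-T_1\}$, one gets $\hat G(T_1)\ge G(T_1)$.

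Finally I would combine these endpoint relations with the concavity of $\hat G(h^{-1}(r))$ from Theorem \ref{Concave}. Hypothesis (4), $\lim_{t\to T_2-0}\sup_{\{-T_2\le\psi<-t\}}(\tilde{\psi}-\psi)=0$, forces the two problems to agree to first order at the outer boundary, so that the one-sided derivative of the concave function $\hat G(h^{-1}(r))$ at $r=\int_{T_2}^{+\infty}ce^{-s}$ equals the constant slope $C$ of the linear $G(h^{-1}(r))$; as a concave function lies below its tangent at that endpoint and that tangent is exactly the graph of $G(h^{-1}(r))$, while $\hat G\ge G$ by the previous step, I conclude $\hat G(h^{-1}(r))\equiv G(h^{-1}(r))$, so $\hat G$ is itself linear. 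Theorem \ref{partiallylinear} then produces its unique optimal form $\hat F$, which, being optimal at $t=T_2$ where the data coincide, also realizes $G(T_2)$ for the original problem; by the uniqueness clause of Theorem \ref{partiallylinear} for the original problem, $\hat F=F$ on $\{\psi<-T_1\}$, whence $F$ would be optimal for $\hat G$ at $t=T_1$ — contradicting the strict inequality above. I expect the main obstacle to be the implication ``concave $+$ matching right-endpoint slope $\Rightarrow$ linear'': making the tangency rigorous demands a careful analysis of the one-sided derivative of $\hat G(h^{-1}(r))$ at the outer endpoint and a quantitative use of hypothesis (4) to control the thin-annulus term $\int_{\{-T_2\le\psi<-t\}}(g(-\psi)-g(-\tilde{\psi}))\,d\nu$ as $t\to T_2$; a secondary point is securing the strict inequality at $t=T_1$ when $g$ fails to be strictly decreasing on a subinterval, which I would handle by approximating $c$ or by transferring the strictness to an interior level through the same push-forward identity.
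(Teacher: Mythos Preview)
Your proposal is correct and follows essentially the same strategy as the paper: both introduce the compensated weight $\hat\varphi=\varphi+\psi-\tilde\psi$ so that $\hat\varphi+\tilde\psi=\varphi+\psi$ stays plurisubharmonic, invoke Theorem~\ref{Concave} for the concavity of $\hat G(h^{-1}(r))$, use hypothesis~(4) to control the slope at the $T_2$ endpoint, and extract the strictness from hypothesis~(3) via the minimizer $F$ furnished by Theorem~\ref{partiallylinear}. The ``especially'' clause is handled identically.

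The packaging of the contradiction differs slightly. The paper argues more directly: after replacing $c$ by a gain constant on $[T_1,T_2]$ (legitimate by Remark~\ref{tildec}, which simultaneously disposes of your ``secondary point'' about strict monotonicity of $g$), it shows the secant slope $\frac{\hat G(T_1)-\hat G(T_2)}{\int_{T_1}^{T_2}ce^{-s}}$ is strictly larger than $C$ while the one-sided slope at $T_2$ is at most $C$, violating concavity in one line. Your route---tangent line at $r_2$, $\hat G\le G$, then $\hat G(T_1)\ge G(T_1)$ forcing equality, then linearity, then $\hat F=F$ by uniqueness---reaches the same contradiction but through an extra loop (and you only need the right-derivative at $r_2$ to be $\le C$, not $=C$, for the tangent comparison). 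Both the derivative estimate you flag as the ``main obstacle'' and your inclusion argument $\hat G(T_1)\ge G(T_1)$ are exactly the content of the paper's two displayed inequalities (\ref{sp2-2}) and the strict part of (\ref{sp2-1}); the paper simply juxtaposes them without the intermediate uniqueness step.
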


We give an example of the partially linear case.
\begin{example}
	Let $M=\Delta$ be the unit disc in $\mathbb{C}$, and $Z_0=o$ be the origin. Let $g<0$ be a increasing convex function on $(-\infty,0)$. Take $\psi=g(\log|z|)$ and $\varphi=2\log|z|-g(\log|z|)$ on $\Delta$. Let $f\equiv dz$, $c\equiv1$ and $\mathcal{F}_{o}=(z)_o$. It is clear that 
	$$G(t)=\int_{\{\psi<-t\}}|dz|^2e^{-\varphi}=4\pi\int_{0}^{e^{g^{-1}(-t)}}se^{-2\log s+g(\log s)}ds$$
	for any $t\ge0$.
	Thus, for any $a,b\in[0,1]$ $(a<b)$,  $G(-\log r)$ is linear on $[a,b]$ if and only if $g'=const$ on $(\log a,\log b)$.
\end{example}

\subsection{A characterization for  the concavity degenerating to partial linearity}\label{sec-characterization}
In this section, we give a characterization for  the concavity degenerating to partial linearity on open Riemann surfaces.

Let $\Omega$ be an open Riemann surface, which admits a nontrivial Green function $G_{\Omega}$. 
Let $a$ be a negative number, and let $z_0\in \Omega$. Let $$\psi=G_{\Omega}(\cdot,z_0)+\max\{G_{\Omega}(\cdot,z_0),a\}$$
 on $\Omega$, and let 
 $$\varphi=\varphi_0+\min\{G_{\Omega}(\cdot,z_0),a\}$$
  on $\Omega$, where $\varphi_0$ is a subharmonic function on $\Omega$.  Let $c(t)$ be a Lebesgue measurable function on $[0,+\infty)$, such that $c(t)e^{-t}$ is decreasing on $[0,+\infty)$ and $\int_0^{+\infty}c(t)e^{-t}dt<+\infty$.

Let $w$ be a local coordinate on a neighborhood $V_{z_0}$ of $z_0$ satisfying $w(z_0)=0$. Let $f_0$ be a holomorphic $(1,0)$ form on $V_{z_0}$. Denote that 
\begin{displaymath}
	\begin{split}
		G(t):=\inf\Bigg\{\int_{\{\psi<-t\}}|\tilde F|^2e^{-\varphi}c(-\psi):&\tilde F\in H^0(\{\psi<-t\},\mathcal{O}(K_{\Omega}))\\
		&\&\,(\tilde F-f_0,z_0)\in(\mathcal{O}(K_{\Omega})\otimes\mathcal{I}(\varphi+\psi))_{z_0}\Bigg\}
	\end{split}
\end{displaymath}
Note that $\psi$ and $\varphi+\psi$ are subharmonic on $\Omega$. Then $G(h^{-1}(r))$ is concave on $[0,\int_0^{+\infty}c(t)e^{-t}dt]$ by Theorem \ref{Concave}, where $h(t)=\int_t^{+\infty}c(s)e^{-s}ds$ for $t\ge0$.

Let us recall some notations (see \cite{OF81,GY-concavity,GMY}).
Let $p:\Delta\rightarrow \Omega$ be the universal covering from unit disc $\Delta$ to $\Omega$.
For any function $u$ on $\Omega$ with value $[-\infty,+\infty)$ such that $e^u$ is locally the modulus of a holomorphic function, there exist a character $\chi_{u}$ (a representation of the fundamental group of $\Omega$) and a holomorphic function $f_u$ on $\Delta$,
such that $|f_u|=p^{*}\left(e^{u}\right)$ and  $g^{*}f=\chi(g)f$ for any element $g$ of the fundamental group of $\Omega$,
where $|\chi|=1$. If $u_1-u_2=\log|f|$, where $f$ is a holomorphic function on $\Omega$, then $\chi_{u_1}=\chi_{u_2}$.
For the Green function $G_{\Omega}(\cdot,z_0)$, denote that $\chi_{z_0}:=\chi_{G_{\Omega}(\cdot,z_0)}$ and $f_{z_0}:=f_{G_{\Omega}(\cdot,z_0)}$.

We give a characterization for  the concavity of $G(h^{-1}(r))$ degenerating to partial linearity.
\begin{theorem}
	\label{thm:char}
	Assume that $\{z\in\Omega:G_{\Omega}(z,z_0)=a\}\Subset\Omega$ and $G(0)\in(0,+\infty)$. Then $G(h^{-1}(r))$ is linear on $[0,\int_{-2a}^{+\infty}c(t)e^{-t}dt]$ and $[\int_{-2a}^{+\infty}c(t)e^{-t}dt,\int_0^{+\infty}c(t)e^{-t}dt]$ if and only if the following statements hold:
	
	$(1)$ $\varphi_0=2\log|g|+2u$ on $\Omega$, where $u$ is a harmonic function on $\Omega$ and $g$ is a holomorphic function on $\Omega$ satisfying that $ord_{z_0}(g)=ord_{z_0}(f_0)$;
	
	$(2)$ $\chi_{-u}=\chi_{z_0}$, where $\chi_{-u}$ and $\chi_{z_0}$ is the characters associated to the functions $-u$ and $G_{\Omega}(\cdot,z_0)$ respectively.
\end{theorem}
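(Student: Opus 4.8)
The plan is to prove the two implications separately, after recording the structure of the data. A direct computation gives $\varphi+\psi=\varphi_0+2G_{\Omega}(\cdot,z_0)+a$ on all of $\Omega$, shows that $T=-\sup_{\Omega}\psi=0$, and shows that the level set $\{G_{\Omega}(\cdot,z_0)=a\}=\{\psi=2a\}$ splits $\Omega$ into an inner region $\{G_{\Omega}<a\}=\{\psi<2a\}$, where $\psi=G_{\Omega}+a$ and $\varphi=\varphi_0+G_{\Omega}$, and an outer region $\{a\le G_{\Omega}<0\}$, where $\psi=2G_{\Omega}$ and $\varphi=\varphi_0+a$. Under $h$ these correspond respectively to $r\in[0,\int_{-2a}^{+\infty}c(s)e^{-s}\mathrm{d}s]$ (i.e. $t\in[-2a,+\infty)$) and to $r\in[\int_{-2a}^{+\infty}c(s)e^{-s}\mathrm{d}s,\int_{0}^{+\infty}c(s)e^{-s}\mathrm{d}s]$ (i.e. $t\in[0,-2a]$), and the integrand $|F|^2e^{-\varphi}c(-\psi)$ matches across the interface $\{G_{\Omega}=a\}$. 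The hypothesis $\{G_{\Omega}=a\}\Subset\Omega$ makes the inner region a relatively compact neighborhood of $z_0$, and $G(0)\in(0,+\infty)$ supplies the nondegeneracy needed to apply the linearity results below.

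For sufficiency I would, assuming $(1)$ and $(2)$, construct the extremal form explicitly on the universal covering $p\colon\Delta\to\Omega$. Writing $f_{z_0}$ and $f_{-u}$ for the holomorphic functions on $\Delta$ with $|f_{z_0}|=p^{*}e^{G_{\Omega}(\cdot,z_0)}$ and $|f_{-u}|=p^{*}e^{-u}$, the identity $\chi_{-u}=\chi_{z_0}$ makes the relevant combination of $f_{z_0}$, $f_{-u}$ and $g$ invariant under the deck group, so that it descends to a single-valued holomorphic $(1,0)$ form $F$ on $\Omega$; the order condition $\mathrm{ord}_{z_0}(g)=\mathrm{ord}_{z_0}(f_0)$ then guarantees $(F-f_0,z_0)\in(\mathcal{O}(K_{\Omega})\otimes\mathcal{I}(\varphi+\psi))_{z_0}$. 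I would then evaluate $\int_{\{\psi<-t\}}|F|^2e^{-\varphi}c(-\psi)$: under $(1)$ the density $|F|^2e^{-\varphi_0}$ reduces to $e^{-2u}$ times the modulus squared of a form with a single pole at $z_0$, whose integral over each level curve $\{G_{\Omega}=s\}$ is computed by the mean value property against harmonic measure; this yields on each of the two regions an expression affine in $h(t)$. By the extremal characterization this identifies $F$ as the minimizer, i.e. $G(t)=\int_{\{\psi<-t\}}|F|^2e^{-\varphi}c(-\psi)$, and shows $G(h^{-1}(r))$ is linear on each piece.

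For necessity I would first use Theorem \ref{sp1} (its $\psi$-counterpart Theorem \ref{sp2} being automatically consistent, since $G_{\Omega}$ is already harmonic off $z_0$): the impossibility of increasing $\varphi$ by a perturbation $\tilde{\varphi}\ge\varphi$, $\tilde{\varphi}\not\equiv\varphi$, with $\tilde{\varphi}+\psi$ still plurisubharmonic, forces the absolutely continuous part of the Riesz measure of $\varphi_0$ to vanish, giving $\varphi_0=2\log|g|+2u$ with $u$ harmonic and $g$ holomorphic whose divisor carries the atoms. To pin down the order and the character I would apply Theorem \ref{partiallylinear} on the outer interval ($T_1=0$, $T_2=-2a$) and the full-linearity characterization of \cite{GY-concavity} (equivalently Corollary \ref{linear}) to the inner problem on $\{\psi<2a\}$, where the supremum of $\psi$ equals $2a$. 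Each yields a unique extremal holomorphic $(1,0)$ form obeying the rigid identity \eqref{a(t)}, and because the interface is compact these two forms agree on $\{G_{\Omega}=a\}$ and glue to one global form $F$ on $\{\psi<0\}=\Omega$. Analyzing the local behavior of $F$ at $z_0$ against the jet condition gives $\mathrm{ord}_{z_0}(g)=\mathrm{ord}_{z_0}(f_0)$, while the single-valuedness of $F$ on $\Omega$ (rather than merely on $\Delta$) forces the characters associated to $-u$ and to $G_{\Omega}(\cdot,z_0)$ to cancel, which is exactly $\chi_{-u}=\chi_{z_0}$.

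I expect the necessity direction to be the main obstacle, through two coupled points. First, gluing the inner and outer extremal forms across $\{G_{\Omega}=a\}$ into a single global $F$: here the compactness hypothesis $\{G_{\Omega}=a\}\Subset\Omega$ and the matching of the two instances of \eqref{a(t)} at $t=-2a$ are what make the two local minimizers coincide on the interface. Second, upgrading the level-curve rigidity of \eqref{a(t)} to the clean global splitting $\varphi_0=2\log|g|+2u$ together with $\chi_{-u}=\chi_{z_0}$: this requires a careful descent from $\Delta$ to $\Omega$, separating the modulus of the extremal form into a harmonic piece and the divisor of a holomorphic function and controlling the periods of the harmonic conjugate so that the resulting character is trivial.
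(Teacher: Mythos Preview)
Your sufficiency sketch is broadly aligned with the paper's Step~1, though the paper does not compute level-curve integrals via harmonic measure; instead it applies the existing full-linearity characterization (Theorem~\ref{thm:e2} and Remark~\ref{r:e2}) twice---once on the inner domain $\{G_{\Omega}<a\}$ with $\psi_1=G_{\Omega}(\cdot,z_0)-a$, and once on all of $\Omega$ with $\psi_2=2G_{\Omega}(\cdot,z_0)$, $\varphi_2=2\log|g|+2u+a$---and then uses the orthogonality in Lemma~\ref{F_t} to show the explicit form $F=b_0gp_*(f_udf_{z_0})$ is also the minimizer for the original $(\psi,\varphi)$-problem on $[0,-2a]$.

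Your necessity sketch, however, has a genuine gap. Invoking the contrapositive of Theorem~\ref{sp1} only tells you that no perturbation $\tilde\varphi\ge\varphi$ with the listed properties exists on the \emph{interior} of each annular region $\{-T_2\le\psi<-T_1\}$; this at best yields local harmonicity of $\varphi+\psi$ on those open pieces, not the global decomposition $\varphi_0=2\log|g|+2u$ on all of $\Omega$, and it says nothing about the interface $\{G_{\Omega}=a\}$ or about the order and character conditions. There is also no need for a gluing argument of two extremal forms: Theorem~\ref{partiallylinear} with $T_1=0$ already produces a single $F_1$ on all of $\Omega$.

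The paper's necessity proof takes a different route. From $F_1$ and Theorem~\ref{thm:e2} on the inner region one gets $2\log|F_1/p_*(df_{z_0})|=\varphi_0+b_1$ there, hence a harmonic $u_1$ and holomorphic $g_1$ on $\{G_{\Omega}<a_1\}$ for some $a_1>a$ (using Lemma~\ref{l:extra}, the Weierstrass theorem, and Lemmas~\ref{l:zero point}--\ref{l:harmonic subharmonic} to control zeros and Lelong numbers at the interface). The key idea you are missing is the construction of the auxiliary problem with $\psi_3:=2G_{\Omega}(\cdot,z_0)$ and $\tilde\varphi_3:=\varphi_0+a$ on $\Omega$: one shows, via Lemma~\ref{F_t} and a derivative-matching argument at $t=-2a$ exploiting $v_1=u_1$ on $\{G_{\Omega}\le a\}$ together with compactness of $\{G_{\Omega}=a\}$, that $G(h^{-1}(r);\psi_3,\tilde\varphi_3)$ is in fact linear on the \emph{entire} interval $[0,\int_0^{+\infty}c(t)e^{-t}dt]$. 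Then Theorem~\ref{thm:e2} applies directly and delivers $(1)$ and $(2)$ globally. Without this reduction to full linearity for an auxiliary problem of the form covered by Theorem~\ref{thm:e2}, your argument does not close.
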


\begin{remark}\label{r:not linear}
 By Theorem 1.17 in \cite{GY-concavity} (see also Theorem \ref{thm:e2}), $G(h^{-1}(r))$ is not linear on $[0,\int_0^{+\infty}c(t)e^{-t}dt]$.
\end{remark}

\subsection{Relations between the concavity property for minimal $L^2$ integrals and the log-convexity for Bergman kernels}

\

Recall the definition of the weighted Bergman kernel on a bounded planar domain $D\subset \mathbb{C}$. For any $z\in D$, the weighted Bergman kernel on $D$ is defined by
\[B_{D,\varphi}(z):=\left(\inf\left\{\int_D|f|^2e^{-\varphi} : f\in\mathcal{O}(D), \ f(z)=1\right\}\right)^{-1},\]
where $\varphi$ is a subharmonic function on $D$.
Let $\phi$ be a negative subharmonic function on $D$ such that $\phi(z_0)=-\infty$, where $z_0\in D$. Berndtsson's subharmonicity of the fiberwise Bergman kernels (\cite{Blogsub}) implies the log-convexity of the Bergman kernels on the sublevel sets of $\phi$, i.e., the function $\log B_{\{\phi<-t\}\cap D,\varphi}(z_0)$ is convex with respect to $t\in [0,+\infty)$ (see \cite{BL16}).  

Note that for $\phi=2G_D(\cdot,z_0)$, the reciprocal of the weighted Bergman kernel on the sublevel set equals to some specific minimal $L^2$ integral mentioned above. Actually, we have
\[\big(B_{\{\phi<-t\}\cap D,\varphi}(z_0)\big)^{-1}=\mathscr{G}(t),\]
where
\begin{align*}
\begin{split}
    \mathscr{G}(t):=\inf\bigg\{\int_{\{2G(\cdot,z_0)<-t\}}|f|^2e^{-\varphi} : &f\in \mathcal{O}(\{2G(\cdot,z_0)<-t\})\\
    &\& (f-1,z_0)\in\mathcal{I}(2G(\cdot,z_0))_{z_0}\bigg\}.
    \end{split}
\end{align*}
It follows that $-\log\mathscr{G}(t)$ is convex with respect to $t\in [0,+\infty)$. Besides, Theorem \ref{Concave} gives that $\mathscr{G}(-\log r)$ is concave with respect to $r\in (0,1]$. We can find that, if we note that $\log \mathscr{G}(t)+t$ is lower bounded on $(0,+\infty)$ (see \cite{BL16}), the log-convexity of the Bergman kernels can imply the concavity of $\mathscr{G}(-\log r)$,  which follows from a simple calculation (see Lemma \ref{l:concave}).

It is natural to ask:
\begin{problem}\label{q:2}
Is $-\log G(t)$ convex on $[0,+\infty)$ for general minimal $L^2$ integral $G(t)$?
\end{problem}

Note that, in the definition of $\mathscr{G}(t)$, the weight function $\varphi$ is subharmonic and the ideal $\mathcal{I}(2G(\cdot,z_0))_{z_0}$ is the maximal ideal of $\mathcal{O}_{z_0}$. If one of the two conditions does not hold, we can give negative answers to Problem \ref{q:2}, i.e., $-\log G(t)$ is not convex.

 Follow the notations and assumptions in Theorem \ref{thm:char} in the following theorem.

\begin{theorem}
	\label{counterexample2}Assume that statements $(1)$ and $(2)$ in Theorem \ref{thm:char} hold. Then $-\log G(t)$ is not convex on $[0,+\infty)$.
\end{theorem}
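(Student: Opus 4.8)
The plan is to combine the partial linearity characterization of Theorem \ref{thm:char} with the strict non-linearity observation of Remark \ref{r:not linear}. Under the hypotheses that statements $(1)$ and $(2)$ hold, Theorem \ref{thm:char} tells us that $G(h^{-1}(r))$ is linear on each of the two subintervals $[0,\int_{-2a}^{+\infty}c(t)e^{-t}\,dt]$ and $[\int_{-2a}^{+\infty}c(t)e^{-t}\,dt,\int_0^{+\infty}c(t)e^{-t}\,dt]$, while Remark \ref{r:not linear} guarantees it is \emph{not} linear on the whole interval $[0,\int_0^{+\infty}c(t)e^{-t}\,dt]$. Since a function that is linear on two adjacent intervals but not linear on their union must have a genuine slope change at the breakpoint $r_0:=\int_{-2a}^{+\infty}c(t)e^{-t}\,dt$, and since $G(h^{-1}(r))$ is concave by Theorem \ref{Concave}, the left slope must strictly exceed the right slope at $r_0$. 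This is the structural fact I will exploit.

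First I would specialize to the concrete gain $c\equiv 1$, or more generally keep $c$ arbitrary and work directly with the change of variables $r=h(t)=\int_t^{+\infty}c(s)e^{-s}\,ds$, so that $t=h^{-1}(r)$ and $\frac{dr}{dt}=-c(t)e^{-t}$. On each linear piece, $G(h^{-1}(r))=\alpha_i r+\beta_i$ for constants $\alpha_i,\beta_i$, hence $G(t)=\alpha_i h(t)+\beta_i$ on the corresponding $t$-interval, namely $t\in[-2a,+\infty)$ for one piece and $t\in[0,-2a]$ for the other. From this I can compute $-\log G(t)$ explicitly on each piece and, crucially, examine its second derivative (or the discrete convexity condition across $t=-2a$). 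The key computation is that on a piece where $G(t)=\alpha h(t)+\beta$ with $h'(t)=-c(t)e^{-t}$, one finds $\frac{d}{dt}(-\log G(t))=\frac{\alpha c(t)e^{-t}}{G(t)}$, and I would track how this quantity jumps or bends at the breakpoint.

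The decisive step is to show that the slope discontinuity of $G(h^{-1}(r))$ at $r_0$ forces $-\log G(t)$ to violate convexity at the corresponding point $t=-2a$. Concretely, convexity of $-\log G(t)$ requires its left derivative at $t=-2a$ to be $\le$ its right derivative. Using the formula above together with continuity of $G$ at $t=-2a$ (so the denominators agree), the one-sided derivatives of $-\log G$ at $t=-2a$ are proportional to the respective slopes $\alpha_{\mathrm{left}}$ and $\alpha_{\mathrm{right}}$ of $G(h^{-1}(r))$; because concavity of $G(h^{-1}(r))$ gives $\alpha_{\mathrm{left}}>\alpha_{\mathrm{right}}\ge 0$ with strict inequality (the non-linearity of Remark \ref{r:not linear}), the left derivative of $-\log G$ strictly exceeds the right derivative, contradicting convexity. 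I expect the main obstacle to be handling the signs and the positivity of the slopes carefully: one must verify $\alpha_{\mathrm{right}}\ge 0$ and that $G>0$ throughout (which follows from $G(0)\in(0,+\infty)$ and monotonicity), and one must confirm that the strict slope drop at $r_0$ is genuine rather than an artifact—this is exactly where Remark \ref{r:not linear} is invoked, since without strictness the two pieces could glue into a single line. Once strictness is secured, the violation of the convexity inequality at $t=-2a$ completes the argument.
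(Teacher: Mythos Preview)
Your breakpoint argument has the orientation reversed, and this is fatal. Write $r_0=\int_{-2a}^{+\infty}c(s)e^{-s}\,ds$ and $t_0=-2a=h^{-1}(r_0)$. The piece $r\in[0,r_0]$ corresponds to $t\in[t_0,+\infty)$ (because $h$ is decreasing), and carries the larger slope $\alpha_1$; the piece $r\in[r_0,R]$ corresponds to $t\in[0,t_0]$ and carries the smaller slope $\alpha_2$. Hence, with your own formula $\frac{d}{dt}\bigl(-\log G(t)\bigr)=\frac{\alpha_i\,c(t)e^{-t}}{G(t)}$, the \emph{left} derivative of $-\log G$ at $t_0$ is proportional to $\alpha_2$ and the \emph{right} derivative is proportional to $\alpha_1>\alpha_2$. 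So the one-sided derivatives satisfy $(-\log G)'_-(t_0)<(-\log G)'_+(t_0)$, which is exactly the inequality \emph{allowed} by convexity. The kink at the breakpoint points the convex way, not the concave way, and therefore gives no contradiction.

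The non-convexity lives elsewhere: on the outer piece $t\in[0,t_0]$ one has $G(t)=\alpha_2 h(t)+\beta_2$ with $\beta_2>0$ (since $g(r)=G(h^{-1}(r))$ is concave with $g(0)=0$, linearity on $[r_0,R]$ with a different slope than on $[0,r_0]$ forces a strictly positive intercept). For $c\equiv 1$ this gives $-\log G(t)=-\log(\alpha_2 e^{-t}+\beta_2)$, whose second derivative is $-\alpha_2\beta_2 e^{-t}/(\alpha_2 e^{-t}+\beta_2)^2<0$; that strict concavity on $[0,t_0]$ is what breaks convexity. This is precisely the content of Lemma~\ref{l:notconvex}, which the paper invokes after reducing to $c\equiv 1$. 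Your plan correctly identifies the ingredients (Theorem~\ref{thm:char} for piecewise linearity, Remark~\ref{r:not linear} for strictness), but you need to replace the breakpoint analysis with this second-derivative computation on the outer piece.
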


Theorem \ref{counterexample2} shows that $-\log G(t)$ may not be convex if the weight $\varphi$ is not subharmonic. 
In the following, we consider the case that the weight is trivial and the ideal $\mathcal{F}_{z_0}$ is not the maximal ideal of $\mathcal{O}_{z_0}$. 

 Let $\Omega\subset\mathbb{C}$ be a domain bounded by finite analytic curves, and let $z_0\in\Omega$. Denote that 
\begin{displaymath}
	\begin{split}
		G_k(t):=\inf\Bigg\{\int_{\{2(k+1)G_{\Omega}(\cdot,z_0)<-t\}}&|F|^2:F\in\mathcal{O}(\{2(k+1)G_{\Omega}(\cdot,z_0)<-t\}),\\
		&(F-1,z_0)\in\mathcal{I}(2(k+1)G_{\Omega}(\cdot,z_0))_{z_0}\Bigg\},	
	\end{split}
\end{displaymath}
where $t\ge0$.
$G_k(-\log r)$ is concave on $(0,1]$ by Theorem \ref{thm:G-concavity} and   $-\log G_0(t)$ is convex on $(0,+\infty)$ (see \cite{BL16}). 

The following theorem gives a negative answer to Problem \ref{q:2}.
\begin{theorem}
	\label{counterexample}If $\Omega$ is not a
	disc ($\Omega$ may be conformally equivalent to a disc), then there exists large enough $k>0$ such that $-\log G_k(t)$ is not convex on $[0,+\infty)$.
\end{theorem}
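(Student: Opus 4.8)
The plan is to locate a quantitative obstruction to convexity by comparing the right-hand derivative of $-\log G_k$ at the endpoint $t=0$ with its slope at infinity, and to show that the gap between the two is governed by the isoperimetric deficit of $\Omega$. First I would pass to the rescaled profile $m_k(\tau):=G_k\big(2(k+1)\tau\big)$, the minimal integral of $|F|^2$ over $\Omega_\tau:=\{G_\Omega(\cdot,z_0)<-\tau\}$ subject to $F-1\in\mathfrak m_{z_0}^{k+1}$; since $t\mapsto 2(k+1)\tau$ is affine, $-\log G_k$ is convex on $[0,+\infty)$ if and only if $f_k:=-\log m_k$ is convex on $[0,+\infty)$. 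The identification of the constraint is elementary: near $z_0$ one has $G_\Omega(\cdot,z_0)=\log|z-z_0|+\gamma(z)$ with $\gamma$ harmonic, so $\mathcal I\big(2(k+1)G_\Omega(\cdot,z_0)\big)_{z_0}=\mathfrak m_{z_0}^{k+1}$ and the competitors are exactly the holomorphic functions with prescribed $k$-jet $(1,0,\dots,0)$ at $z_0$. To compute the slope at infinity I would sandwich $m_k(\tau)$: the constant $F\equiv1$ is always admissible, giving $m_k(\tau)\le\mathrm{Area}(\Omega_\tau)$, while the sub-mean value inequality applied to the largest disc inscribed in $\Omega_\tau$ together with $F(z_0)=1$ gives a matching lower bound. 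With $\gamma(z_0)=\gamma_0$ both bounds equal $\pi e^{-2\gamma_0}e^{-2\tau}(1+o(1))$, so $f_k(\tau)=2\tau+O(1)$ and $\lim_{\tau\to\infty}f_k'(\tau)=2$ for every $k$. Consequently, \emph{if} $f_k$ were convex its right derivative would satisfy $f_k'(0^+)\le2$.

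For the slope at $0$, restricting the extremal $F_k$ on $\Omega$ to $\Omega_\tau$ keeps it admissible, so $m_k(\tau)\le\int_{\Omega_\tau}|F_k|^2\,dA$ for $\tau>0$ while $m_k(0)=\int_\Omega|F_k|^2\,dA$. Dividing by $\tau$ and letting $\tau\to0^+$, the coarea formula (using $\{G_\Omega=0\}=\partial\Omega$ and $|\nabla G_\Omega|=\partial G_\Omega/\partial n>0$ there) yields
\begin{equation}\nonumber
f_k'(0^+)\ \ge\ \frac{\int_{\partial\Omega}|F_k|^2\,|\nabla G_\Omega|^{-1}\,dS}{\int_\Omega|F_k|^2\,dA};
\end{equation}
the one-sided derivative $m_k'(0^+)$ exists unconditionally because $G_k(-\log r)$ is concave by Theorem \ref{thm:G-concavity}. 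Here $F_k$ is the orthogonal projection of the constant $1$ onto $V_k^\perp$, where $V_k=\{F\in A^2(\Omega):F\in\mathfrak m_{z_0}^{k+1}\}$, since the minimal-norm element of the affine set $\{c_0=1,\ c_1=\dots=c_k=0\}$ is $1-P_{V_k}(1)$. As $\bigcap_kV_k=\{0\}$ we get $P_{V_k}(1)\to0$, hence $F_k\to1$ in $A^2(\Omega)$, and because $\Omega$ is bounded by analytic curves this upgrades to convergence of the boundary traces. Thus the right-hand side above tends, as $k\to\infty$, to $L(\Omega):=\big(\int_{\partial\Omega}|\nabla G_\Omega|^{-1}\,dS\big)/\mathrm{Area}(\Omega)$.

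It remains to prove the rigidity $L(\Omega)\ge2$, strict unless $\Omega$ is a disc. Green's identity gives the flux normalization $\int_{\partial\Omega}|\nabla G_\Omega|\,dS=\int_{\partial\Omega}\tfrac{\partial G_\Omega}{\partial n}\,dS=2\pi$; Cauchy--Schwarz then yields $\int_{\partial\Omega}|\nabla G_\Omega|^{-1}\,dS\ge\mathrm{Length}(\partial\Omega)^2/(2\pi)$, and the isoperimetric inequality $\mathrm{Length}(\partial\Omega)^2\ge4\pi\,\mathrm{Area}(\Omega)$ gives $\int_{\partial\Omega}|\nabla G_\Omega|^{-1}\,dS\ge2\,\mathrm{Area}(\Omega)$, i.e. $L(\Omega)\ge2$, the isoperimetric step being strict unless $\Omega$ is a disc. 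Hence for $\Omega$ not a disc we have $L(\Omega)>2$, so there is $k_0$ such that for all $k\ge k_0$ the displayed lower bound exceeds $2$; combined with $f_k'(0^+)\le2$ this contradicts convexity of $f_k$, and therefore $-\log G_k$ is not convex on $[0,+\infty)$. (For the disc with $z_0$ at the center one computes $L(\Omega)=2$ and indeed $-\log G_k(t)=t/(k+1)-\log\pi$ is affine, consistent with the theorem asserting nothing in that case.)

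I expect the main obstacle to be the two analytic lemmas underpinning the slope computations: rigorously justifying the one-sided coarea derivative of $m_k$ at $\tau=0$ (equivalently, that restriction of the $\Omega$-extremal controls the derivative from the right), and promoting the $A^2$-convergence $F_k\to1$ to convergence of the weighted boundary integral $\int_{\partial\Omega}|F_k|^2\,|\nabla G_\Omega|^{-1}\,dS$, which requires boundary regularity of the extremals (available since $\partial\Omega$ is analytic and $|\nabla G_\Omega|$ is bounded below there by the Hopf lemma). The conceptual core, by contrast, is elementary: the entire phenomenon is the \emph{strictness} of the isoperimetric inequality, which is exactly what separates a disc from every other domain.
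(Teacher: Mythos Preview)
Your strategy is sound in spirit and shares with the paper the essential observation that the obstruction is isoperimetric, but there is a genuine gap at the step you yourself flag: promoting $F_k\to 1$ in $A^2(\Omega)$ to convergence of the boundary integral $\int_{\partial\Omega}|F_k|^2|\nabla G_\Omega|^{-1}\,dS$. Each $F_k$ is a finite combination of derivatives of the Bergman kernel at $z_0$ and hence extends real-analytically past $\partial\Omega$, so the individual boundary integrals make sense; however, as $k\to\infty$ you are summing more and more such kernels, and $A^2$-convergence gives no control on boundary traces in general. Without this, your lower bound on $f_k'(0^+)$ does not pass to the limit, and the contradiction with $f_k'(0^+)\le 2$ is not reached. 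I do not see an easy fix that stays at the level of $\tau=0$.

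The paper avoids this difficulty by never touching the boundary. It takes the limit $k\to\infty$ at each \emph{interior} level $\tau>0$: since the extremals $F_{k,\tau}$ on $\Omega_\tau$ have uniformly bounded $L^2$ norm and $(F_{k,\tau}-1)\in\mathfrak m_{z_0}^{k+1}$, a normal-families plus Fatou argument gives $m_k(\tau)\to\lambda(\Omega_\tau)$ for every $\tau\ge 0$. Then the hypothesis ``all $-\log G_k$ convex'' forces $-\log\lambda(\Omega_\tau)$ to be convex; combined with the bound $\lambda(\Omega_\tau)\ge G_0(2\tau)\ge e^{-2\tau}G_0(0)$ (from the concavity in Theorem~\ref{thm:G-concavity}), the convex function $-2\tau-\log\lambda(\Omega_\tau)$ is bounded above on $[0,\infty)$, hence decreasing, so $s(\tau)=e^{2\tau}\lambda(\Omega_\tau)$ is increasing. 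Lemma~\ref{l:BZ} says $s$ is decreasing, so $s$ is constant, and Lemma~\ref{l:BZ2} (whose proof is exactly your Cauchy--Schwarz plus isoperimetric computation) forces $\Omega$ to be a disc. In other words, the paper compares $f_k$ at two \emph{interior} times rather than computing the one-sided derivative at $\tau=0$; your argument would be rescued by the same move, since $m_k(0)/m_k(\tau_0)\to\lambda(\Omega)/\lambda(\Omega_{\tau_0})$ already follows from your $A^2$-convergence, and strict decrease of $s$ at some $\tau_0>0$ replaces the boundary limit.
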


We give an example on the disc as follows:
\begin{example}\label{e:counterexample3}
	Let $k\ge1$ be an integer. Let $\psi:=2(k+1)\log|z|$ on $\Delta$, and let $a_j\in\mathbb{C}$ for any $0\le j\le k$. Denote that $G(t):=\inf\{\int_{\{\psi<-t\}}|F|^2:\mathcal{O}(\{\psi<-t\})\,\&\,f^{(j)}(o)=j!a_j$ for any $0\le j\le k\}$ for $t\ge0$. Note that $G(-\log r)$ is concave on $(0,1]$ by Theorem \ref{thm:G-concavity}. If there exist $j_1$ and $j_2$ satisfying $j_1\not=j_2$, $a_{j_1}\not=0$ and $a_{j_2}\not=0$, then $-\log G(t)$ is not convex on $[0,+\infty)$ (the proof is given in Section \ref{sec:p4}).
\end{example}

\section{Preparations}
In this section, we do some preparations.

\subsection{Some lemmas about minimal $L^2$ integrals}

We recall some lemmas, which will be used in the discussion of minimal $L^2$ integrals.

\begin{lemma}[see \cite{GMY}]\label{dbarequa}
	Let $(M,X,Z)$ be a triple satisfying condition $(A)$ and $c(t)\in\mathcal{P}_{T,M}$. Let $B\in (0,+\infty)$ and $t_0>t_1>T$ be arbitrary given. Let $\psi<-T$ be a plurisubharmonic function on $M$. Let $\varphi$ be a plurisubharmonic function on $M$. Let $F$ be a holomorphic $(n,0)$ form on $\{\psi<-t_0\}$ such that
	\begin{equation}\nonumber
		\int_{\{\psi<-t_0\}}|F|^2e^{-\varphi}c(-\psi)<+\infty.
	\end{equation}
Then there exists a holomorphic $(n,0)$ form $\tilde{F}$ on $\{\psi<-t_1\}$ such that
\begin{equation}\nonumber
	\int_{\{\psi<-t_1\}}|\tilde{F}-(1-b_{t_0,B}(\psi))F|^2e^{-\varphi-\psi+v_{t_0,B}(\psi)}c(-v_{t_0,B}(\psi))\leq C\int_{t_1}^{t_0+B}c(t)e^{-t}\mathrm{d}t,
\end{equation}
where
\begin{equation}\nonumber
	C=\int_M\frac{1}{B}\mathbb{I}_{\{-t_0-B<\psi<-t_0\}}|F|^2e^{-\varphi-\psi},
\end{equation}
$b_{t_0,B}(t)=\int_{-\infty}^t\frac{1}{B}\mathbb{I}_{\{-t_0-B<s<-t_0\}}\mathrm{d}s$, and $v_{t_0,B}(t)=\int_{-t_0}^tb_{t_0,B}\mathrm{d}s-t_0$.
\end{lemma}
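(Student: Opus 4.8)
The plan is to obtain $\tilde F$ as the holomorphic form $(1-b_{t_0,B}(\psi))F$ corrected by a $\bar\partial$-solution, and to bound the correction by the sharp weighted $L^2$ estimate that underlies the whole theory. First I would record the elementary shape of the auxiliary functions: $b_{t_0,B}$ is the Lipschitz primitive of $\frac1B\mathbb{I}_{\{-t_0-B<s<-t_0\}}$, so it increases from $0$ on $\{\,\cdot\le-t_0-B\}$ to $1$ on $\{\,\cdot\ge-t_0\}$, whence $v_{t_0,B}$ is convex and increasing with $v_{t_0,B}'=b_{t_0,B}$ and $v_{t_0,B}(t)=t$ for $t\ge-t_0$. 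In particular $-v_{t_0,B}(\psi)>t_1>T$ on $\{\psi<-t_1\}$, so $c(-v_{t_0,B}(\psi))$ is well defined there, and $(1-b_{t_0,B}(\psi))F$ (extended by $0$ to $\{-t_0\le\psi<-t_1\}$) is a locally Lipschitz $(n,0)$ form on $\{\psi<-t_1\}$ with $\bar\partial\big((1-b_{t_0,B}(\psi))F\big)=-\tfrac1B\mathbb{I}_{\{-t_0-B<\psi<-t_0\}}\,\bar\partial\psi\wedge F$, a datum supported in the annulus $\{-t_0-B<\psi<-t_0\}$.

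Next I would reduce to a tractable setting. By condition $(A)$ it suffices to work on $M\setminus(X\cup Z)$, a weakly pseudoconvex K\"ahler manifold; I would exhaust it by relatively compact weakly pseudoconvex K\"ahler subdomains and replace $\psi,\varphi$ by decreasing sequences of smooth (strictly) plurisubharmonic functions, solve the problem there, and then pass to the limit, removing the regularizations, extending across the $L^2$-negligible set $X$, and crossing the analytic set $Z$ by the usual removable-singularity argument. These reductions are routine and parallel to the earlier papers in the series.

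The core step is to solve $\bar\partial u=\lambda$ on $\{\psi<-t_1\}$, where $\lambda:=-\tfrac1B\mathbb{I}_{\{-t_0-B<\psi<-t_0\}}\bar\partial\psi\wedge F$, with the prescribed weight, and then to set $\tilde F:=(1-b_{t_0,B}(\psi))F-u$, which is $\bar\partial$-closed, hence holomorphic, and satisfies $\tilde F-(1-b_{t_0,B}(\psi))F=-u$. For this I would invoke the optimal twisted $L^2$ estimate of Guan--Zhou with base weight $\Phi=\varphi+\psi$ (plurisubharmonic, since both $\varphi$ and $\psi$ are) and positive twist functions $\eta=\eta(-v_{t_0,B}(\psi))$, $\phi=\phi(-v_{t_0,B}(\psi))$ chosen so that $(\eta+\phi)^{-1}e^{-\Phi}$ equals the target weight $e^{-\varphi-\psi+v_{t_0,B}(\psi)}c(-v_{t_0,B}(\psi))$, i.e.\ $\eta+\phi=e^{-v_{t_0,B}(\psi)}/c(-v_{t_0,B}(\psi))$. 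Writing every quantity as a function of $w:=-v_{t_0,B}(\psi)$ and using $v'=b$, $v''=b'$, the positivity hypothesis of the estimate, namely that $B:=\eta\,i\partial\bar\partial\Phi-i\partial\bar\partial\eta-\phi^{-1}i\partial\eta\wedge\bar\partial\eta$ is nonnegative and dominates $\lambda$ in the $\partial\psi$-direction, reduces --- after discarding the manifestly nonnegative term $\eta\,i\partial\bar\partial(\varphi+\psi)$ --- to a one-variable differential inequality for $\eta$ and $\phi$, which I would verify for the explicit choice forced by the matching condition, crucially using that $c(t)e^{-t}$ is decreasing and that $v_{t_0,B}$ is convex.

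Granting the estimate, I would compute its right-hand side $\int\langle B^{-1}\lambda,\lambda\rangle e^{-\Phi}$: on the support of $\lambda$ the inverse curvature cancels the $i\partial\psi\wedge\bar\partial\psi$ factor in $\lambda\otimes\bar\lambda$ and leaves $\tfrac1B\mathbb{I}_{\{-t_0-B<\psi<-t_0\}}|F|^2e^{-\varphi-\psi}$ multiplied by a constant coming from the twist functions. Because $\eta,\phi$ are built from a primitive of $c(t)e^{-t}$ with the relevant range beginning at $t_1$ and ending at $t_0+B$, that constant is exactly $\int_{t_1}^{t_0+B}c(t)e^{-t}\,dt$, so the bound reads $C\int_{t_1}^{t_0+B}c(t)e^{-t}\,dt$ with $C=\int_M\tfrac1B\mathbb{I}_{\{-t_0-B<\psi<-t_0\}}|F|^2e^{-\varphi-\psi}$, as claimed. \textbf{The main obstacle} is precisely this curvature/ODE verification with the \emph{sharp} constant: obtaining $\int_{t_1}^{t_0+B}c(t)e^{-t}\,dt$ rather than a larger factor forces the optimal (not the classical H\"ormander) estimate, and the delicate point is to check that the twist pair $(\eta,\phi)$ manufactured from $c$ and $v_{t_0,B}$ satisfies the differential inequality while pinning down exactly the right constant --- the gain hypothesis that $c(t)e^{-t}$ is decreasing and the convexity of $v_{t_0,B}$ are what make this possible.
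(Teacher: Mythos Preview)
The paper does not prove this lemma at all: it is quoted verbatim from \cite{GMY} (as indicated by the bracket ``see \cite{GMY}''), so there is no in-paper proof to compare against. Your outline is the standard Guan--Zhou argument that underlies \cite{GMY} and its predecessors \cite{guan_sharp,GM,GY-concavity}: reduce via condition $(A)$ and regularization to a smooth weakly pseudoconvex K\"ahler setting, set $\tilde F=(1-b_{t_0,B}(\psi))F-u$ where $u$ solves $\bar\partial u=-b'_{t_0,B}(\psi)\bar\partial\psi\wedge F$, and obtain the sharp bound via the twisted Bochner--Kodaira inequality with twist functions $\eta,\phi$ constructed from a primitive of $c(t)e^{-t}$. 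Your identification of the key points (convexity of $v_{t_0,B}$, the gain condition $c(t)e^{-t}$ decreasing, and the need for the \emph{optimal} rather than classical estimate to produce exactly $\int_{t_1}^{t_0+B}c(t)e^{-t}\,dt$) is accurate and matches the cited source.
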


Following the assumptions and notations in Theorem \ref{Concave}, we recall three properties about $G(t)$.
\begin{lemma}[see \cite{GMY}]\label{G(t)=0}
	The following three statements are equivalent:
	
	(1). $f\in H^0(Z_0,(\mathcal{O}(K_M)\otimes\mathcal{F})|_{Z_0})$;
	
	(2). $G(t)=0$ for some $t\geq T$;
	
	(3). $G(t)=0$ for any $t\geq T$.
\end{lemma}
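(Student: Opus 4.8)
The plan is to prove the cycle $(1)\Rightarrow(3)\Rightarrow(2)\Rightarrow(1)$, in which only the last implication carries content. For $(1)\Rightarrow(3)$ I would simply test the definition of $G(t)$ with the zero form $\tilde f\equiv 0$: for every $t\ge T$ it is a holomorphic $(n,0)$ form on $\{\psi<-t\}$, and since $\tilde f-f=-f$ lies in $(\mathcal{O}(K_M)\otimes\mathcal{F})_{z_0}$ at each $z_0\in Z_0$ precisely because $f$ does (the stalk is a module, hence closed under negation), the form $\tilde f\equiv 0$ is admissible and has vanishing weighted norm, forcing $G(t)=0$. The implication $(3)\Rightarrow(2)$ is trivial.

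For $(2)\Rightarrow(1)$, assume $G(t_0)=0$ for some $t_0\ge T$ and take a minimizing sequence $\{\tilde f_j\}$ of admissible forms on $\{\psi<-t_0\}$ with $\int_{\{\psi<-t_0\}}|\tilde f_j|^2e^{-\varphi}c(-\psi)\to 0$. Because $c\in\mathcal{P}_{T,M}$, the weight $e^{-\varphi}c(-\psi)$ has a positive lower bound on each compact subset of $\{\psi<-t_0\}\setminus E$, where $E\subset Z\cap\{\psi=-\infty\}$ is the closed set attached to the gain; hence $\int_K|\tilde f_j|^2\to 0$ on every such $K$, i.e. $\tilde f_j\to 0$ in $L^2_{\mathrm{loc}}(\{\psi<-t_0\}\setminus E)$, and by the interior sub-mean-value estimate for $|\tilde f_j|^2$ this upgrades to locally uniform convergence $\tilde f_j\to 0$ on $\{\psi<-t_0\}\setminus E$. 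Since $E$ is contained in the nowhere dense analytic set $Z$ and $X$ is locally negligible by condition $(A)$, the forms $\tilde f_j$ are genuinely holomorphic across $E$, so a maximum-principle/Riemann-extension argument promotes the convergence to all of $\{\psi<-t_0\}$; in particular $\tilde f_j\to 0$ locally uniformly near each $z_0\in Z_0$ (which lies in $\{\psi<-t_0\}$, as required for the germ condition defining $G$ to be meaningful). Finally, each admissible $\tilde f_j$ satisfies $(\tilde f_j-f,z_0)\in(\mathcal{O}(K_M)\otimes\mathcal{F})_{z_0}$, and this stalk is a finitely generated submodule of $\mathcal{O}(K_M)_{z_0}$, hence closed under locally uniform limits of holomorphic forms; passing to the limit gives $(-f,z_0)=\lim_j(\tilde f_j-f,z_0)\in(\mathcal{O}(K_M)\otimes\mathcal{F})_{z_0}$, so $(f,z_0)\in(\mathcal{O}(K_M)\otimes\mathcal{F})_{z_0}$ for every $z_0\in Z_0$, which is exactly $(1)$.

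The two limiting steps near the singularities of $\psi$ are the delicate points. The first is justifying that purely $L^2$ decay off $E$ forces locally uniform convergence through the analytic set $Z$ and the negligible set $E$: here the precise content of condition $(A)$ is essential, and the argument must treat $\{\psi<-t_0\}$ as a complex manifold away from $X\cup Z$ before extending across. The second and more substantial obstacle is the closedness of the stalk module $(\mathcal{O}(K_M)\otimes\mathcal{F})_{z_0}$ under locally uniform convergence, i.e. the statement that a finitely generated $\mathcal{O}_{z_0}$-submodule is closed in the Fréchet topology of local uniform convergence of holomorphic germs. This is a standard consequence of coherence (Oka's theorem) together with the Krull intersection/Artin--Rees machinery, but it is precisely the step that makes $(2)\Rightarrow(1)$ work and where a naive coefficient-wise convergence argument would fail; I would therefore invoke it as a known fact rather than attempt to reprove it.
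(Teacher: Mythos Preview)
The paper does not prove this lemma; it is quoted verbatim from \cite{GMY} and used as a black box. Your cycle $(1)\Rightarrow(3)\Rightarrow(2)\Rightarrow(1)$ is correct, and the two ingredients you single out as delicate are precisely the two auxiliary lemmas the paper imports alongside: your ``closedness of the stalk module under locally uniform limits'' is Lemma~\ref{module}, and your ``maximum-principle/Riemann-extension'' step across $E$ is the content of Lemma~\ref{s_K}. So your argument is essentially a reconstruction of how the proof must go in \cite{GMY}.

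One remark on economy: once you grant the existence of a minimizer (Lemma~\ref{F_t}, also cited from \cite{GMY}), the implication $(2)\Rightarrow(1)$ becomes a one-liner and the minimizing-sequence argument is unnecessary. If $G(t_0)=0$, Lemma~\ref{F_t} produces a holomorphic $F_{t_0}$ on $\{\psi<-t_0\}$ with $(F_{t_0}-f)\in H^0(Z_0,(\mathcal{O}(K_M)\otimes\mathcal{F})|_{Z_0})$ and $\int_{\{\psi<-t_0\}}|F_{t_0}|^2e^{-\varphi}c(-\psi)=0$; since the weight $e^{-\varphi}c(-\psi)$ is strictly positive off the measure-zero set $E$, this forces $F_{t_0}\equiv 0$, hence $(-f,z_0)\in(\mathcal{O}(K_M)\otimes\mathcal{F})_{z_0}$ for every $z_0\in Z_0$. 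Of course the proof of Lemma~\ref{F_t} itself runs the compactness argument you wrote (via Lemma~\ref{s_K}) and invokes Lemma~\ref{module}, so the content is the same; the difference is only whether one packages the limiting step once and for all in the existence-of-minimizer lemma or reruns it here.

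A minor point on your extension step: the phrase ``maximum-principle/Riemann-extension'' is correct but hides the actual mechanism. The forms $\tilde f_j$ are already holomorphic across $E$, so nothing needs to be extended; what you need is that locally uniform convergence on $\{\psi<-t_0\}\setminus E$ propagates to $\{\psi<-t_0\}$. This follows by covering a neighbourhood of any $p\in E$ by a polydisc whose distinguished boundary avoids $E$ (possible since $E$ is a proper analytic set) and applying the maximum principle, or more efficiently by invoking Lemma~\ref{s_K} directly with $g_j\equiv e^{-\varphi}c(-\psi)$ and $C=0$.
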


The following lemma shows the existence of minimal form. 
\begin{lemma}[see \cite{GMY}]\label{F_t}
	Assume that $G(t)<+\infty$ for some $t\in [T,+\infty)$. Then there exists a unique holomorphic $(n,0)$ form $F_t$ on $\{\psi<-t\}$ satisfying
	\begin{equation}\nonumber
		\int_{\{\psi<-t\}}|F_t|^2e^{-\varphi}c(-\psi)=G(t)
	\end{equation}
and $(F_t-f)\in H^0(Z_0,(\mathcal{O}(K_M)\otimes\mathcal{F})|_{Z_0})$.

Furthermore, for any holomorphic $(n,0)$ form $\hat{F}$ on $\{\psi<-t\}$ satisfying
\begin{equation}\nonumber
	\int_{\{\psi<-t\}}|\hat{F}|^2e^{-\varphi}c(-\psi)<+\infty
\end{equation}
and $(\hat{F}-f)\in H^0(Z_0,(\mathcal{O}(K_M)\otimes\mathcal{F})|_{Z_0})$, we have the following equality,
\begin{flalign}\nonumber
\begin{split}
	&\int_{\{\psi<-t\}}|F_t|^2e^{-\varphi}c(-\psi)+\int_{\{\psi<-t\}}|\hat{F}-F_t|^2e^{-\varphi}c(-\psi)\\
	=&\int_{\{\psi<-t\}}|\hat{F}|^2e^{-\varphi}c(-\psi).
\end{split}	
\end{flalign}
\end{lemma}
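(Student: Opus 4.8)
The plan is to recognize this as a minimal-norm (orthogonal projection) problem in a weighted $L^2$ Hilbert space, and to extract both the existence/uniqueness of $F_t$ and the ``Pythagorean'' identity directly from the projection theorem. I equip the space $\mathcal{H}^2(c,t)=\mathcal{H}^2(c,t,\varphi,\psi)$ of holomorphic $(n,0)$ forms on $\{\psi<-t\}$ with finite weighted norm with the inner product
\[
\langle\alpha,\beta\rangle_t:=\int_{\{\psi<-t\}}\sqrt{-1}^{\,n^2}\,\alpha\wedge\bar\beta\,e^{-\varphi}c(-\psi),
\]
so that $\|\alpha\|_t^2=\int_{\{\psi<-t\}}|\alpha|^2e^{-\varphi}c(-\psi)$. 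Let $A:=\{\tilde f\in\mathcal{H}^2(c,t):(\tilde f-f)\in H^0(Z_0,(\mathcal{O}(K_M)\otimes\mathcal{F})|_{Z_0})\}$ be the admissible set. Since $G(t)<+\infty$, $A$ is nonempty, and by definition $G(t)=\inf_{\tilde f\in A}\|\tilde f\|_t^2$. Fixing one $f_0\in A$, I write $A=f_0+V$, where $V:=\{g\in\mathcal{H}^2(c,t):(g,z_0)\in(\mathcal{O}(K_M)\otimes\mathcal{F})_{z_0}\ \text{for all }z_0\in Z_0\}$ is a linear subspace, so $A$ is an affine subspace.

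The first genuine step is to show that $\mathcal{H}^2(c,t)$ is complete. Given a Cauchy sequence $\{\alpha_j\}$, the gain condition $c\in\mathcal{P}_{T,M}$ furnishes a closed set $E\subset Z\cap\{\psi=-\infty\}$ on whose complement the weight $e^{-\varphi}c(-\psi)$ is locally bounded below by positive constants; since $E\subset Z$, this lower bound holds on every compact subset of the manifold $\{\psi<-t\}\setminus(X\cup Z)$. Hence $\{\alpha_j\}$ is $L^2_{\mathrm{loc}}$-Cauchy there, and the submean-value inequality for holomorphic forms upgrades this to local uniform convergence, producing a holomorphic limit $\alpha$ on $\{\psi<-t\}\setminus(X\cup Z)$. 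Condition $(A)(1)$ (local negligibility of $X$) lets $\alpha$ extend across $X$, and the analyticity of $Z$ together with $E\subset Z$ lets it extend across $Z$ by a removable-singularity argument, so that $\alpha\in H^0(\{\psi<-t\},\mathcal{O}(K_M))$. A Fatou-type lower-semicontinuity estimate then gives $\alpha\in\mathcal{H}^2(c,t)$ and $\|\alpha_j-\alpha\|_t\to0$.

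Next I verify that $V$ is closed. Norm convergence forces, as above, local uniform convergence near each $z_0\in Z_0$, and the constraint $(g,z_0)\in(\mathcal{O}(K_M)\otimes\mathcal{F})_{z_0}$ is a submodule condition over the Noetherian local ring $\mathcal{O}_{z_0}$, which is closed under local uniform convergence of holomorphic germs; hence $V$, and therefore $A=f_0+V$, is closed. With $A$ a nonempty closed affine subspace of the Hilbert space $\mathcal{H}^2(c,t)$, the projection theorem yields a unique $F_t\in A$ with $\|F_t\|_t^2=\inf_{\tilde f\in A}\|\tilde f\|_t^2=G(t)$, and $(F_t-f)\in H^0(Z_0,(\mathcal{O}(K_M)\otimes\mathcal{F})|_{Z_0})$ because $F_t\in A$. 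This establishes the existence and uniqueness assertions.

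Finally, the projection theorem also characterizes $F_t$ by orthogonality: $\langle F_t,g\rangle_t=0$ for every $g\in V$. For any admissible $\hat F\in A$ one has $\hat F-F_t\in V$, so $\langle F_t,\hat F-F_t\rangle_t=0$, whence
\[
\|\hat F\|_t^2=\|F_t+(\hat F-F_t)\|_t^2=\|F_t\|_t^2+\|\hat F-F_t\|_t^2,
\]
which is exactly the stated equality. I expect the main obstacle to be the completeness step: guaranteeing that a weighted-$L^2$ Cauchy sequence of holomorphic forms converges to a form that is again holomorphic and in the space, which requires combining the positive lower bound of the weight off $E$ (from the gain class), the submean-value estimate, and the extension of holomorphic forms across the degeneracy loci $X$, $Z$, and $E$. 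The closedness of $V$ via the Noetherian/coherence property of the ideal germs is the secondary technical point.
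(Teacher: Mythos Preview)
Your proposal is correct and follows essentially the approach of the cited reference \cite{GMY}: the paper itself does not prove this lemma but quotes it, and the surrounding tools it records (Lemma~\ref{module} on closedness of submodules under local uniform convergence, and Lemma~\ref{s_K} on extracting locally uniformly convergent subsequences from weighted-$L^2$-bounded families) are exactly the ingredients you invoke for completeness and for closedness of $V$. The only cosmetic discrepancy is that the standard write-up usually argues directly with a minimizing sequence and Lemma~\ref{s_K} rather than first establishing abstract Hilbert-space completeness, but the content is the same; also note that the paper's symbol $\mathcal{H}^2(c,t)$ already denotes the \emph{affine} admissible set (your $A$), not the ambient linear space, so you should introduce a separate name for the latter to avoid clashing with the paper's notation.
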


\begin{lemma}[see \cite{GMY}]\label{l-cont}
	$G(t)$ is decreasing with respect to $t\in [T,+\infty)$. $\lim\limits_{t\rightarrow t_0+0}G(t)=G(t_0)$ for any $t_0\in [T,+\infty)$. And if $G(t)<+\infty$ for some $t>T$, then $\lim\limits_{t\rightarrow +\infty}G(t)=0$. Especially, $G(t)$ is lower semicontinuous on $[T,+\infty)$.
\end{lemma}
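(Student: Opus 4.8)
The plan is to prove the four assertions in the order stated, obtaining lower semicontinuity for free at the end. \emph{Monotonicity} is immediate: for $T\le t<t'$ we have $\{\psi<-t'\}\subset\{\psi<-t\}$, and since $Z_0\subset\{\psi=-\infty\}$ (forced by $G$ being defined with the germ constraint at $Z_0$ for every $t$) lies in both sublevel sets, the admissibility constraint $(\tilde f-f)\in H^0(Z_0,(\mathcal{O}(K_M)\otimes\mathcal{F})|_{Z_0})$ is imposed at the same points; thus restricting any competitor for $G(t)$ to $\{\psi<-t'\}$ yields a competitor for $G(t')$ whose integral does not increase, because the integrand $|\tilde f|^2e^{-\varphi}c(-\psi)$ is nonnegative and the domain is smaller. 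Taking the infimum gives $G(t')\le G(t)$. For the \emph{limit at infinity}, fix any competitor $F$ with $\int_{\{\psi<-t_*\}}|F|^2e^{-\varphi}c(-\psi)<+\infty$ (which exists once $G(t_*)<+\infty$); its restrictions are admissible for every $t\ge t_*$, so $G(t)\le\int_{\{\psi<-t\}}|F|^2e^{-\varphi}c(-\psi)$. Since $\psi$ is plurisubharmonic and $\not\equiv-\infty$, the set $\{\psi=-\infty\}$ is Lebesgue-null, so $\mathbb{I}_{\{\psi<-t\}}\to 0$ almost everywhere as $t\to+\infty$; dominating by the integrable function $\mathbb{I}_{\{\psi<-t_*\}}|F|^2e^{-\varphi}c(-\psi)$ and invoking dominated convergence forces $G(t)\to 0$.

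The crux is \emph{right-continuity}. By monotonicity the limit $\ell:=\lim_{t\to t_0+0}G(t)=\sup_{t>t_0}G(t)$ exists and satisfies $\ell\le G(t_0)$, so only the reverse inequality is at issue; assume $\ell<+\infty$, otherwise there is nothing to prove. Choose $t_j\downarrow t_0$ and the minimal forms $F_{t_j}$ from Lemma \ref{F_t}, so that $\int_{\{\psi<-t_j\}}|F_{t_j}|^2e^{-\varphi}c(-\psi)=G(t_j)\le\ell$. On any compact $K\subset\{\psi<-t_0\}\setminus(X\cup Z)$ we have $K\subset\{\psi<-t_j\}$ for $j$ large, and because $c\in\mathcal{P}_{T,M}$ the weight $e^{-\varphi}c(-\psi)$ has a positive lower bound on $K$; hence $\int_K|F_{t_j}|^2$ is uniformly bounded, giving a normal family. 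Passing to a subsequence, $F_{t_j}$ converges locally uniformly on $\{\psi<-t_0\}\setminus(X\cup Z)$ to a holomorphic $(n,0)$ form, which extends across the negligible set $X$ and the analytic set $Z$ by condition $(A)$ to a holomorphic form $F$ on $\{\psi<-t_0\}$. Writing each integral over $\{\psi<-t_0\}$ with the factor $\mathbb{I}_{\{\psi<-t_j\}}$ and applying Fatou's lemma yields $\int_{\{\psi<-t_0\}}|F|^2e^{-\varphi}c(-\psi)\le\liminf_j G(t_j)=\ell$, while the germ condition at $Z_0$ survives the passage to the limit because membership in the coherent sheaf $\mathcal{O}(K_M)\otimes\mathcal{F}$ is closed under local uniform convergence. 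Thus $F$ is admissible for $G(t_0)$ and $G(t_0)\le\ell$, proving $\lim_{t\to t_0+0}G(t)=G(t_0)$.

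Finally, \emph{lower semicontinuity} is a formal consequence: right-continuity gives $\lim_{t\to t_0+0}G(t)=G(t_0)$, while monotonicity gives $\lim_{t\to t_0-0}G(t)=\inf_{t<t_0}G(t)\ge G(t_0)$, so $\liminf_{t\to t_0}G(t)\ge G(t_0)$. The main obstacle is the right-continuity step, and specifically the compactness argument producing the limit form $F$ on the larger sublevel set from forms defined only on the shrinking sets $\{\psi<-t_j\}$: one must simultaneously control the unweighted $L^2$ norms on compacta (using the positive lower bound of $e^{-\varphi}c(-\psi)$ from the gain class), extend the limit holomorphically across $X\cup Z$, preserve finiteness of the weighted integral through Fatou, and verify that the interpolation constraint at $Z_0$ is not lost in the limit. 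The remaining assertions are comparatively routine.
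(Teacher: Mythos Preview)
Your proof is correct and follows the standard route that the paper (which only cites this lemma from \cite{GMY} without reproving it) would use: monotonicity by restriction, the limit at infinity by dominated convergence, and right-continuity by extracting a locally uniform limit of the minimal forms $F_{t_j}$ from Lemma~\ref{F_t}, with the germ condition preserved via Lemma~\ref{module}. The compactness step you carry out by hand is exactly what Lemma~\ref{s_K} packages (with the analytic set $S$ taken to contain $E$), so your argument and the paper's toolkit line up essentially verbatim.
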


We also recall the following two lemmas.
\begin{lemma}[see \cite{G-R}]\label{module}
	Let $N$ be a submodule of $\mathcal{O}_{\mathbb{C}^n,o}^q$, $q\in\mathbb{Z}_+\cup\{\infty\}$. Let $\{f_j\}\subset\mathcal{O}_{\mathbb{C}^n}(U)^q$ be a sequence of $q-$tuples holomorphic functions in an open neighborhood $U$ of the origin $o$. Assume that $\{f_j\}$ converges uniformly in $U$ towards a $q-$tuples $f\in\mathcal{O}_{\mathbb{C}^n,o}^q$, and assume furthermore that all the germs $(f_j,o)$ belong to $N$. Then $(f,o)\in N$.
\end{lemma}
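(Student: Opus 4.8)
\section*{Proof proposal for Lemma \ref{module}}

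The plan is to reduce the statement to the Noetherian/coherence properties of the local ring $\mathcal{O}_{\mathbb{C}^n,o}$ together with the Artin--Rees lemma, which controls the $\mathfrak{m}$-adic topology on finitely generated modules. Since $\mathcal{O}_{\mathbb{C}^n,o}$ is Noetherian, the submodule $N\subset\mathcal{O}_{\mathbb{C}^n,o}^{q}$ is finitely generated, say by germs $(g_1,o),\dots,(g_p,o)$ chosen from representatives $g_1,\dots,g_p\in\mathcal{O}_{\mathbb{C}^n}(U)^q$ holomorphic on a common neighborhood $U$ of $o$ (shrinking $U$ if necessary). The goal is to show the limit germ $(f,o)$ lies in the $\mathcal{O}_{\mathbb{C}^n,o}$-module generated by these germs.

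First I would pass to the $\mathfrak{m}$-adic completion. The key closedness fact is that $N$ is closed in the $\mathfrak{m}$-adic topology of $\mathcal{O}_{\mathbb{C}^n,o}^{q}$, i.e. $N=\bigcap_{k\geq 1}\bigl(N+\mathfrak{m}^{k}\mathcal{O}_{\mathbb{C}^n,o}^{q}\bigr)$; this is a standard consequence of the Artin--Rees lemma over the Noetherian local ring $\mathcal{O}_{\mathbb{C}^n,o}$ (equivalently, of Krull's intersection theorem applied to the quotient module $\mathcal{O}_{\mathbb{C}^n,o}^{q}/N$). Thus it suffices to prove that for every fixed order $k$, the Taylor expansion of $f$ at $o$ up to order $k-1$ agrees with that of some element of $N$; that is, $(f,o)\in N+\mathfrak{m}^{k}\mathcal{O}_{\mathbb{C}^n,o}^{q}$ for all $k$.

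The second step transfers the hypothesis of uniform convergence into convergence of finitely many Taylor coefficients. Since $f_j\to f$ uniformly on $U$, the Cauchy integral formula for the coefficients of a holomorphic function shows that every Taylor coefficient of $f_j$ at $o$ converges to the corresponding coefficient of $f$; in particular, for each fixed $k$, the truncated jets $j^{k-1}_o(f_j)\to j^{k-1}_o(f)$ in the finite-dimensional space of $(k-1)$-jets. Because each $(f_j,o)\in N$, each jet $j^{k-1}_o(f_j)$ lies in the image $N_k$ of $N$ in the finite-dimensional quotient $\mathcal{O}_{\mathbb{C}^n,o}^{q}/\mathfrak{m}^{k}\mathcal{O}_{\mathbb{C}^n,o}^{q}$. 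A linear subspace of a finite-dimensional vector space is closed, so the limit jet $j^{k-1}_o(f)$ also lies in $N_k$, which is exactly the assertion $(f,o)\in N+\mathfrak{m}^{k}\mathcal{O}_{\mathbb{C}^n,o}^{q}$. Combining this with the closedness from the first step yields $(f,o)\in N$.

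The main obstacle I anticipate is the closedness of $N$ in the $\mathfrak{m}$-adic topology, i.e. justifying $N=\bigcap_k(N+\mathfrak{m}^k\mathcal{O}_{\mathbb{C}^n,o}^q)$; everything else is either standard finite-dimensional linear algebra or the elementary fact that uniform convergence forces coefficientwise convergence. This closedness is genuinely where the Noetherian hypothesis enters and cannot be bypassed, so I would either invoke the Artin--Rees lemma directly or cite Krull's intersection theorem for the finitely generated module $\mathcal{O}_{\mathbb{C}^n,o}^{q}/N$ over the Noetherian local ring $\mathcal{O}_{\mathbb{C}^n,o}$. An alternative route, avoiding completions, would be to argue directly with a fixed finite generating set and solve the membership $f=\sum_i h_i g_i$ approximately modulo $\mathfrak{m}^k$ using the linear-algebra surjectivity of the coefficient map, but this ultimately rests on the same Artin--Rees-type control, so I would prefer the clean $\mathfrak{m}$-adic formulation above.
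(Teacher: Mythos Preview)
The paper does not prove this lemma; it is quoted from Grauert--Remmert, \emph{Coherent Analytic Sheaves}, as a standard result and used as a black box. So there is no ``paper's proof'' to compare against. Your argument is the standard one and is correct for finite $q$: the closedness of $N$ in the $\mathfrak{m}$-adic topology via Artin--Rees/Krull, together with the observation that uniform convergence on $U$ forces convergence of each finite-order jet at $o$ (Cauchy estimates) and that the image of $N$ in each finite-dimensional jet space $\mathcal{O}_{\mathbb{C}^n,o}^{q}/\mathfrak{m}^{k}\mathcal{O}_{\mathbb{C}^n,o}^{q}$ is a (closed) linear subspace, gives exactly $(f,o)\in\bigcap_{k}\bigl(N+\mathfrak{m}^{k}\mathcal{O}_{\mathbb{C}^n,o}^{q}\bigr)=N$.

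The one point to flag is the case $q=\infty$ allowed by the statement: there $\mathcal{O}_{\mathbb{C}^n,o}^{q}$ is not a finitely generated module, so $\mathcal{O}_{\mathbb{C}^n,o}^{q}/N$ need not be finitely generated, and your appeal to Krull's intersection theorem (or Artin--Rees) for that quotient is not justified as written. This is a genuine gap in the proposal if one insists on covering $q=\infty$, but it is irrelevant for the present paper: the only use of Lemma~\ref{module} here (in the proof of Theorem~\ref{partiallylinear}) is with $q=1$, and your argument handles that case completely.
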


\begin{lemma}[see \cite{GY-concavity}]\label{s_K}
	Let $M$ be a complex manifold. Let $S$ be an analytic subset of $M$. Let $\{g_j\}_{j=1,2,\ldots}$ be a sequence of nonnegative Lebesgue measurable functions on $M$, which satisfies that $g_j$ are almost everywhere convergent to $g$ on $M$ when $j\rightarrow +\infty$, where $g$ is a nonnegative Lebesgue measurable function on $M$. Assume that for any compact subset $K$ of $M\setminus S$, there exist $s_K\in (0,+\infty)$ and $C_K\in (0,+\infty)$ such that
	\begin{equation}\nonumber
		\int_Kg_j^{-s_K}\mathrm{d}V_M\leq C_K
	\end{equation}
	for any $j$, where $\mathrm{d}V_M$ is a continuous volume form on $M$.
	
	Let $\{F_j\}_{j=1,2,\ldots}$ be a sequence of holomorphic $(n,0)$ form on $M$. Assume that $\liminf\limits_{j\rightarrow +\infty}\int_M|F_j|^2g_j\leq C$, where $C$ is a positive constant. Then there exists a subsequence $\{F_{j_l}\}_{l=1,2,\ldots}$, which satisfies that $\{F_{j_l}\}$ is uniformly convergent to a holomorphic $(n,0)$ form $F$ on $M$ on any compact subset of $M$ when $l\rightarrow +\infty$, such that
	\begin{equation}\nonumber
		\int_M|F|^2g\leq C.
	\end{equation}
\end{lemma}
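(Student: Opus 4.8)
The plan is to combine a normal families (Montel) argument on $M\setminus S$ with a removable-singularity extension across $S$, and to close with an application of Fatou's lemma. First I would pass to a subsequence along which $\int_M|F_j|^2g_j$ actually converges to $\liminf_{j\to+\infty}\int_M|F_j|^2g_j\le C$; relabelling, the global integrals are then uniformly bounded by some $C'<+\infty$.

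The key local estimate comes from H\"older's inequality. Fix a compact $K\subset M\setminus S$ together with its exponent $s_K$ and constant $C_K$. Working in local coordinates through the coefficient function of the form, and writing $|F_j|^{2p}=(|F_j|^2g_j)^p\,g_j^{-p}$, I would apply H\"older with exponents $1/p$ and $1/(1-p)$ and choose $p=p_K:=s_K/(1+s_K)\in(0,1)$. Since $\int_K|F_j|^2g_j\le\int_M|F_j|^2g_j\le C'$ and $\int_Kg_j^{-s_K}\,\mathrm{d}V_M\le C_K$, this yields $\int_K|F_j|^{2p_K}\,\mathrm{d}V_M\le (C')^{p_K}C_K^{1-p_K}$, a bound independent of $j$. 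Because $\log|F_j|$ is plurisubharmonic, $|F_j|^{2p_K}$ is plurisubharmonic, so the sub-mean-value inequality over small balls upgrades the uniform $L^{2p_K}$ bound into a uniform sup bound of $\{F_j\}$ on any compact subset of $M\setminus S$. By Montel's theorem, a subsequence $\{F_{j_l}\}$ then converges, locally uniformly on $M\setminus S$, to a holomorphic $(n,0)$ form $F$ on $M\setminus S$.

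Next I would extend $F$ across $S$. As $S$ is a proper analytic subset it is nowhere dense and of measure zero, while each $F_{j_l}$ is already holomorphic on all of $M$. To obtain uniform boundedness near a point of $S$, I would restrict the $F_{j_l}$ to generic complex lines $\ell$ in a coordinate polydisc, chosen so that $S\cap\ell$ is finite, and invoke the one-variable maximum modulus principle: on a circle in $\ell$ avoiding these finitely many points the $F_{j_l}$ are uniformly bounded (that circle lying in a compact subset of $M\setminus S$), hence they are uniformly bounded on the enclosed disc. This produces a local uniform bound across $S$, so by Montel the convergence $F_{j_l}\to F$ in fact holds locally uniformly on all of $M$, and the limit $F$ is a holomorphic $(n,0)$ form on $M$.

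Finally, since $F_{j_l}\to F$ locally uniformly on $M$ and $g_{j_l}\to g$ almost everywhere, we get $|F_{j_l}|^2g_{j_l}\to|F|^2g$ almost everywhere on $M$. Fatou's lemma then gives $\int_M|F|^2g\le\liminf_{l\to+\infty}\int_M|F_{j_l}|^2g_{j_l}=\liminf_{j\to+\infty}\int_M|F_j|^2g_j\le C$, which is the desired conclusion. I expect the extension step across $S$ to be the main obstacle: away from $S$ we have genuine control of the weights, but near $S$ the hypotheses provide no bound on $g_j^{-1}$, so the uniform boundedness of the limit there must be extracted purely from the holomorphicity of the $F_j$ on $M$ via the maximum principle rather than from the $L^2$ estimates themselves.
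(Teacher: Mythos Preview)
The paper does not prove this lemma at all; it is merely quoted from \cite{GY-concavity} and used as a black box. So there is no ``paper's proof'' to compare against.

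Your argument is essentially correct and is the standard route: H\"older to get uniform $L^{2p_K}$ bounds of the coefficient functions on compacta of $M\setminus S$, sub-mean-value to upgrade to sup bounds, Montel, extension across $S$, and Fatou. Two small comments. First, when applying H\"older you are implicitly comparing the top-form $|F_j|^2$ with the fixed continuous volume form $dV_M$; on any compact these differ by a bounded positive continuous density, so the constants absorb this. Second, your flagged ``main obstacle'' is fine, but the cleanest way to execute it is not via generic lines but via a single well-chosen direction: locally $S$ is contained in a hypersurface $\{h=0\}$, and after a linear change of coordinates one can arrange (Weierstrass preparation) that for all $z'$ with $|z'|\le\eta$ the slice $S\cap(\mathbb{C}\times\{z'\})$ lies in $\{|z_1|<\delta/2\}$. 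Then the compact set $\{|z_1|=\delta\}\times\{|z'|\le\eta\}$ sits entirely in $M\setminus S$, and the one-variable Cauchy formula in $z_1$ transfers the uniform bound there to a uniform bound on $\{|z_1|\le\delta/2\}\times\{|z'|\le\eta\}$. This avoids having to quantify over ``generic'' lines and makes the uniformity in $l$ transparent.
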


\subsection{Some results on open Riemann surfaces}
	 Let $\Omega$ be an open Riemann surface, which admits a nontrivial Green function $G_{\Omega}$. Let $z_0\in\Omega$. Let $\psi$ be a subharmonic function on $\Omega$ such that $(\psi-pG_{\Omega}(\cdot,z_0))(z_0)>-\infty$, where $p>0$ is a constant, and let $\varphi$ be a Lebesgue measurable function on $\Omega$ such that $\varphi+\psi$ is subharmonic on $\Omega$.
	 
	Let $f_0$ be a holomorphic $(1,0)$ form on a neighborhood of $z_0$. Let $c(t)$ be a function on $[0,+\infty)$, which satisfies that $c(t)e^{-t}$ is decreasing, $\int_0^{+\infty}c(t)e^{-t}dt<+\infty$ and $e^{-\varphi}c(-\psi)$ has a positive lower bound on any compact subset of $\Omega\backslash\{z_0\}$.
	Denote that 
\begin{displaymath}
	\begin{split}
		G(t):=\inf\Bigg\{\int_{\{\psi<-t\}}|f|^2e^{-\varphi}c(-\psi):&f\in H^0(\{\psi<-t\},\mathcal{O}(K_{\Omega}))\\
	&\&\,(f-f_0,z_0)\in(\mathcal{O}(K_{\Omega})\otimes\mathcal{I}(\varphi+\psi))_{z_0}	\Bigg\}
	\end{split}
\end{displaymath}
for any $t\ge0$. 

By Theorem \ref{Concave}, $G(h^{-1}(r))$ is concave with respect to $r\in[0,\int_0^{+\infty}c(t)e^{-t}dt]$, where	
	 ${h}(t)=\int_{t}^{+\infty}c(s)e^{-s}ds$ for any $t\ge0$. The following theorem give a characterization for  $G(h^{-1}(r))$ degenerating to linearity. 	
	
\begin{theorem}[\cite{GY-concavity}]
	\label{thm:e2}
  Assume that $G(0)\in(0,+\infty)$. Then $G(h^{-1}(r))$ is linear with respect to $r$ on $[0,\int_0^{+\infty}c(t)e^{-t}dt]$ if and only if the following statements hold:
	
	(1) $\varphi+\psi=2\log|g|+2G_{\Omega}(z,z_0)+2u$ and $ord_{z_0}(g)=ord_{z_0}(f_0)$, where $g$ is a holomorphic function on $\Omega$ and $u$ is a harmonic function on $\Omega$;
	
	(2) $\psi=2pG_{\Omega}(z,z_0)$ on $\Omega$ for some $p>0$;
	
	(3) $\chi_{-u}=\chi_{z_0}.$
\end{theorem}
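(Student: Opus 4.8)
The plan is to prove both implications of the characterization, writing $G:=G_{\Omega}(\cdot,z_{0})$ throughout and using the canonical data on the universal covering $p:\Delta\to\Omega$: the holomorphic function $f_{z_{0}}$ with $|f_{z_{0}}|=p^{*}e^{G}$ and character $\chi_{z_{0}}$, together with the holomorphic $(1,0)$-form $df_{z_{0}}$ on $\Delta$, which carries the same character $\chi_{z_{0}}$ and satisfies $|df_{z_{0}}|^{2}=e^{2G}|\nabla G|^{2}$ (since $df_{z_{0}}=f_{z_{0}}\cdot 2\partial G$). The geometric engine on both sides is the flux identity $\int_{\{G=-s\}}|\nabla G|\,dl=2\pi$ for every regular value $-s<0$, which disintegrates $L^{2}$ masses of Green-shaped densities along the level sets of $\psi$.

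For the sufficiency direction, assume $(1)$, $(2)$ and $(3)$. First I would produce the extremal form explicitly: set $F:=c_{0}\,g\,e^{u+i\tilde{u}}\,df_{z_{0}}$ on $\Delta$, where $e^{u+i\tilde{u}}$ is the holomorphic function of modulus $p^{*}e^{u}$ with character $\chi_{u}$. The character of $F$ is $\chi_{u}\chi_{z_{0}}$, and condition $(3)$, $\chi_{-u}=\chi_{z_{0}}$, is exactly $\chi_{u}=\chi_{z_{0}}^{-1}$, so $F$ has trivial character and descends to a single-valued holomorphic $(1,0)$-form on $\Omega$; condition $(1)$ with $\mathrm{ord}_{z_{0}}(g)=\mathrm{ord}_{z_{0}}(f_{0})$ lets me choose $c_{0}$ so that $(F-f_{0},z_{0})\in(\mathcal{O}(K_{\Omega})\otimes\mathcal{I}(\varphi+\psi))_{z_{0}}$. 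Using $(1)$ and $(2)$ to compute $|F|^{2}e^{-\varphi}=c_{0}^{2}e^{(2p-2)G}|df_{z_{0}}|^{2}=c_{0}^{2}e^{\psi}|\nabla G|^{2}$ and applying the flux identity level set by level set, I would verify the mass law $\int_{\{-t_{2}\le\psi<-t_{1}\}}|F|^{2}e^{-\varphi}a(-\psi)=C\int_{t_{1}}^{t_{2}}a(t)e^{-t}\,dt$ for every nonnegative $a$. This makes $t\mapsto\int_{\{\psi<-t\}}|F|^{2}e^{-\varphi}c(-\psi)$ an affine function of $h(t)$; since $F$ is a competitor this value dominates $G(t)$, and combining the upper bound with the concavity of $G(h^{-1}(r))$ from Theorem \ref{Concave} (the two agreeing at the endpoints $r=0$ and $r=h(0)$ once $F$ is seen to be minimal, via the orthogonality characterization in Lemma \ref{F_t}) forces equality, hence linearity.

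For the necessity direction, assume $G(h^{-1}(r))$ is linear. By Corollary \ref{linear} there is a unique global holomorphic $(1,0)$-form $F$ on $\Omega$ with $G(t)=\int_{\{\psi<-t\}}|F|^{2}e^{-\varphi}c(-\psi)$ and the rigid mass law $\int_{\{-t_{2}\le\psi<-t_{1}\}}|F|^{2}e^{-\varphi}a(-\psi)=C\int_{t_{1}}^{t_{2}}a(t)e^{-t}\,dt$. The first and crucial step is to show that $\psi$ is harmonic on $\Omega\setminus\{z_{0}\}$: the fixed measure $|F|^{2}e^{-\varphi}$ is forced to carry exactly exponential mass $Ce^{-t}$ on $\{\psi<-t\}$, so any Riesz mass of $\psi$ away from $z_{0}$ would distort the level sets and violate this law; together with the prescribed logarithmic singularity of $\psi$ at $z_{0}$ and the extremal (largest) property of the Green function among negative subharmonic functions with that singularity, this yields $\psi=2pG$ for some $p>0$, which is $(2)$. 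With $\psi=2pG$ in hand, the same rigidity pins the density $|F|^{2}e^{-\varphi}=C'e^{(2p-2)G}|df_{z_{0}}|^{2}$, so that $\varphi+\psi=\log|F/df_{z_{0}}|^{2}+2G+\mathrm{const}$. Subharmonicity of $\varphi+\psi$ then forces $F$ to vanish at the critical points of $G$ (the zeros of $df_{z_{0}}$), cancelling the poles of the ratio, so $F/df_{z_{0}}=g\,e^{u+i\tilde{u}}$ with $g$ holomorphic and $u$ harmonic; this gives $\varphi+\psi=2\log|g|+2G+2u$ with $\mathrm{ord}_{z_{0}}(g)=\mathrm{ord}_{z_{0}}(f_{0})$, i.e.\ $(1)$, while the single-valuedness of $F$ against the character $\chi_{z_{0}}$ of $df_{z_{0}}$ forces $\chi_{u}=\chi_{z_{0}}^{-1}$, i.e.\ $\chi_{-u}=\chi_{z_{0}}$, which is $(3)$.

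The main obstacle is the harmonicity step in the necessity direction, namely upgrading the one-parameter mass law into the pointwise statement that $\psi$ has no Riesz mass on $\Omega\setminus\{z_{0}\}$ and that $|F|^{2}e^{-\varphi}$ is exactly Green-shaped. The mass law by itself controls only the total integral on each level set, so the pointwise rigidity must be extracted from the equality case of the $\bar\partial$-estimate in Lemma \ref{dbarequa} underlying Theorem \ref{Concave}: linearity saturates every inequality in that proof, and tracing the saturation forces $\partial(\text{solution})$ to be proportional to $\partial\psi$ and the residual curvature term to vanish, which is precisely what makes $\psi$ harmonic off $z_{0}$ and locks the weight into the stated form. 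Verifying the minimality (orthogonality) of the explicit $F$ in the sufficiency direction is the secondary technical point, handled by the reproducing property of $df_{z_{0}}$ together with Lemma \ref{F_t}.
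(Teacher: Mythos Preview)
The paper does not contain a proof of Theorem~\ref{thm:e2}; it is quoted verbatim from \cite{GY-concavity} as a known result and used only as a black box (in particular in the proof of Theorem~\ref{thm:char} and in Remark~\ref{r:e2}). There is therefore no ``paper's own proof'' to compare your proposal against.

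That said, a few comments on the mathematics of your sketch. Your sufficiency argument is essentially the right one and matches what is recorded in Remark~\ref{r:e2}: the form $F=b_{0}gp_{*}(f_{u}df_{z_{0}})$ is the unique minimizer, and the coarea/flux computation then gives the mass law directly. However, your appeal to ``concavity plus endpoint matching'' to deduce that $F$ is minimal is circular as written---you invoke minimality of $F$ to get the endpoint value and then use that to conclude minimality. The clean route is to prove orthogonality of $F$ to every difference $\tilde F-F$ with $(\tilde F-F,z_{0})\in(\mathcal O(K_{\Omega})\otimes\mathcal I(\varphi+\psi))_{z_{0}}$ directly (this is a reproducing-kernel computation on the disc via $p$), which immediately gives $G(t)=\int_{\{\psi<-t\}}|F|^{2}e^{-\varphi}c(-\psi)$ for every $t$, and then the mass law makes this affine in $h(t)$ without any concavity input.

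For the necessity direction you correctly isolate the hard step---passing from the scalar mass law to the pointwise statements $\psi=2pG$ and $|F|^{2}e^{-\varphi}=Ce^{\psi}|\nabla G|^{2}$---but your explanation of how to extract it (``tracing the saturation'' of Lemma~\ref{dbarequa}) is not a proof, only a heuristic. The integral identity $\int_{\{-t_{2}\le\psi<-t_{1}\}}|F|^{2}e^{-\varphi}=C\int_{t_{1}}^{t_{2}}e^{-t}dt$ by itself does not forbid $\psi$ from having Riesz mass off $z_{0}$: one can redistribute mass among level sets without changing the pushforward. The actual argument in \cite{GY-concavity} does not proceed by a direct equality-case analysis of the $\bar\partial$-estimate; rather it first uses the freedom in Corollary~\ref{linear} to replace $c$ by other gains and compare minimal $L^2$ integrals for $(\varphi,\psi)$ against those for $(\varphi+\psi-2pG,\,2pG)$, forcing the two to coincide, and then invokes the single-point extremal characterization on the Green sublevel sets. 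Your proposal does not supply this comparison mechanism, and without it the harmonicity step remains a genuine gap.
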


\begin{remark}
	\label{r:e2}
	When the three statements in Theorem \ref{thm:e2} hold,
	$$b_0gp_*(f_udf_{z_0})$$
	is the unique holomorphic $(1,0)$ form $F$ on $\Omega$ such that $(F-f,z_0)\in(\mathcal{O}(K_{\Omega}))_{z_0}\otimes\mathcal{I}(\varphi+\psi)_{z_0}$  and
	$G(t)=\int_{\{\psi<-t\}}|F|^2e^{-\varphi}c(-\psi)$ for any $t\ge0$, where $p:\Delta\rightarrow\Omega$ is the universal covering, $f_u$ and are holomorphic functions on $\Delta$ satisfying $|f_u|=p^*(e^{u})$ and $|f_{z_0}|=p^*\left(e^{G_{\Omega}(\cdot,z_0)}\right)$, and $b_0$ is a constant such that $ord_{z_0}(b_0gp_*(f_udf_{z_0})-f_0)>ord_{z_0}(f_0)$. 
\end{remark}

 Let $w$ be a local coordinate on a neighborhood $V_{z_0}$ of $z_0\in\Omega$ satisfying $w(z_0)=0$. Let us recall some properties about Green functions.

 \begin{lemma}[see \cite{S-O69}, see also \cite{Tsuji}]
 	\label{l:green}
 	$G_{\Omega}(z,z_0)=\sup_{v\in\Delta_0(z_0)}v(z)$, where $\Delta_0(z_0)$ is the set of negative subharmonic functions $v$ on $\Omega$ satisfying that $v-\log|w|$ has a locally finite upper bound near $z_0$.
 \end{lemma}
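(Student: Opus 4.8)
The plan is to prove the two inequalities $G^{*}\le G_{\Omega}(\cdot,z_0)$ and $G^{*}\ge G_{\Omega}(\cdot,z_0)$, where $G^{*}(z):=\sup_{v\in\Delta_0(z_0)}v(z)$. The inequality $G^{*}\ge G_{\Omega}(\cdot,z_0)$ is immediate: $G_{\Omega}(\cdot,z_0)$ is itself a negative subharmonic function on $\Omega$, harmonic away from $z_0$, and $G_{\Omega}(\cdot,z_0)-\log|w|$ is bounded near $z_0$ (its defining logarithmic pole), so $G_{\Omega}(\cdot,z_0)\in\Delta_0(z_0)$ and hence does not exceed the supremum. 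All the substance is therefore in showing that every competitor $v\in\Delta_0(z_0)$ satisfies $v\le G_{\Omega}(\cdot,z_0)$ on $\Omega$.

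For this I would use an exhaustion and the maximum principle. Fix a regular exhaustion $\Omega_1\Subset\Omega_2\Subset\cdots$ with $\bigcup_j\Omega_j=\Omega$, where each $\Omega_j$ is relatively compact with smooth boundary and, for $j$ large enough that $z_0\in\Omega_j$, carries a classical Green function $G_{\Omega_j}(\cdot,z_0)$ vanishing on $\partial\Omega_j$. Two standard facts are needed: first, comparing on $\Omega_j$ the harmonic function $G_{\Omega_j}(\cdot,z_0)-G_{\Omega}(\cdot,z_0)$ (the logarithmic poles cancel), whose boundary values $-G_{\Omega}(\cdot,z_0)|_{\partial\Omega_j}$ are nonnegative, gives $G_{\Omega_j}(\cdot,z_0)\ge G_{\Omega}(\cdot,z_0)$ on $\Omega_j$; second, $G_{\Omega_j}(\cdot,z_0)\downarrow G_{\Omega}(\cdot,z_0)$ pointwise as $j\to\infty$. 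Now fix $v\in\Delta_0(z_0)$. On $\Omega_j\setminus\{z_0\}$ the function $v-G_{\Omega_j}(\cdot,z_0)$ is subharmonic, and it is bounded above near $z_0$ because the two logarithmic singularities cancel; hence it extends to a subharmonic function on all of $\Omega_j$. On $\partial\Omega_j$ it equals $v\le0$, so by the maximum principle $v\le G_{\Omega_j}(\cdot,z_0)$ on $\Omega_j$. Letting $j\to\infty$ yields $v\le G_{\Omega}(\cdot,z_0)$ at every point of $\Omega$, and taking the supremum over $v\in\Delta_0(z_0)$ gives $G^{*}\le G_{\Omega}(\cdot,z_0)$, completing the proof.

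I expect the genuine obstacle to be the justification of $G_{\Omega_j}(\cdot,z_0)\downarrow G_{\Omega}(\cdot,z_0)$: this monotone convergence of the Green functions of the exhausting domains to a nontrivial limit is exactly the point at which the hypothesis that $\Omega$ admits a nontrivial Green function (non-parabolicity) is essential, since on a parabolic surface the limit would degenerate to $-\infty$. The two supporting technical steps---the removable-singularity argument that lets $v-G_{\Omega_j}(\cdot,z_0)$ extend subharmonically across $z_0$ (using that a subharmonic function bounded above near an isolated point extends across it), and the regularity of $\partial\Omega_j$ needed to read off boundary values and apply the maximum principle---are classical and are the content of the cited references \cite{S-O69,Tsuji}. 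A minor point to handle carefully is whether ``negative'' is interpreted as $v<0$ or $v\le0$; either way $v\le0$ on $\partial\Omega_j$ suffices for the maximum-principle step.
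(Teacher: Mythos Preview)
The paper does not give its own proof of this lemma; it is stated with a citation to \cite{S-O69} and \cite{Tsuji} and then used as a black box. Your argument---exhibiting $G_{\Omega}(\cdot,z_0)$ as a competitor for one inequality, and for the other comparing an arbitrary $v\in\Delta_0(z_0)$ with $G_{\Omega_j}(\cdot,z_0)$ on a regular exhaustion via the maximum principle and a removable-singularity step, then passing to the limit---is exactly the classical proof recorded in those references, and it is correct as written.
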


\begin{lemma}[see \cite{GY-concavity}]\label{l:G-compact}
	For any open neighborhood $U$ of $z_0$, there exists $t>0$ such that $\{G_{\Omega}(z,z_0)<-t\}$ is a relatively compact subset of $U$.
\end{lemma}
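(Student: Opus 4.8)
The plan is to reduce the statement to a single estimate on the Green function away from its pole and then to prove that estimate through a regular exhaustion of $\Omega$. First I would localize near $z_0$: choose a coordinate chart $(V_{z_0},w)$ with $w(z_0)=0$ and $V_{z_0}\subset U$, and pick $r>0$ small enough that $\overline{D_r}:=\{z\in V_{z_0}:|w(z)|\le r\}$ is a compact subset of $V_{z_0}$; write $D_r=\{|w|<r\}$ and $C_r=\{|w|=r\}$. Since $C_r$ is a compact subset of $\Omega\setminus\{z_0\}$ on which $G_{\Omega}(\cdot,z_0)$ is harmonic and strictly negative, the number $m_r:=\min_{C_r}G_{\Omega}(\cdot,z_0)$ lies in $(-\infty,0)$. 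It then suffices to establish the key estimate
\[ G_{\Omega}(\cdot,z_0)\ge m_r \quad\text{on }\Omega\setminus D_r, \]
because this forces $\{G_{\Omega}(\cdot,z_0)<m_r\}\subset D_r$, so that with $t:=-m_r>0$ we obtain $\{G_{\Omega}(z,z_0)<-t\}\subset D_r$, whose closure lies in $\overline{D_r}\Subset U$, i.e. it is relatively compact in $U$.

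To prove the key estimate I would take a regular exhaustion $\Omega_1\Subset\Omega_2\Subset\cdots$ with $\bigcup_j\Omega_j=\Omega$, each $\Omega_j$ relatively compact with smooth boundary and containing $\overline{D_r}$ and $z_0$, and let $G_{\Omega_j}(\cdot,z_0)$ be the Green function of $\Omega_j$, which vanishes on $\partial\Omega_j$, is harmonic and negative on $\Omega_j\setminus\{z_0\}$, and has a logarithmic pole at $z_0$. Two facts drive the argument: the sequence $G_{\Omega_j}(\cdot,z_0)$ decreases to $G_{\Omega}(\cdot,z_0)$ as $j\to\infty$ (the classical exhaustion construction of the Green function, consistent with the characterization in Lemma \ref{l:green}); and on the compact region $\overline{\Omega_j}\setminus D_r$, whose boundary is $C_r\cup\partial\Omega_j$, the minimum principle for the harmonic function $G_{\Omega_j}(\cdot,z_0)$ yields $G_{\Omega_j}(\cdot,z_0)\ge m_r^{(j)}:=\min_{C_r}G_{\Omega_j}(\cdot,z_0)$ there, since $G_{\Omega_j}(\cdot,z_0)=0$ on $\partial\Omega_j$ and $m_r^{(j)}<0$.

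Combining these, the monotonicity $G_{\Omega_j}(\cdot,z_0)\ge G_{\Omega}(\cdot,z_0)$ gives $m_r^{(j)}\ge m_r$, hence $G_{\Omega_j}(\cdot,z_0)\ge m_r$ on $\Omega_j\setminus D_r$. Fixing $z\in\Omega\setminus D_r$ and letting $j\to\infty$ (so $z\in\Omega_j$ eventually and $G_{\Omega_j}(z,z_0)\to G_{\Omega}(z,z_0)$), the inequality passes to the limit to give $G_{\Omega}(z,z_0)\ge m_r$, which is exactly the key estimate. I expect the main obstacle to be the justification of the minimum principle on the non-compact surface: applied directly to $G_{\Omega}(\cdot,z_0)$ on $\Omega\setminus\overline{D_r}$ it is obstructed by the uncontrolled behaviour at the ideal boundary (and at possible irregular boundary points), and the role of the exhaustion is precisely to circumvent this by working on compactly bounded domains, where the minimum principle is elementary, and then passing to the decreasing limit. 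The only external input is the convergence $G_{\Omega_j}(\cdot,z_0)\searrow G_{\Omega}(\cdot,z_0)$, which is classical and compatible with Lemma \ref{l:green}.
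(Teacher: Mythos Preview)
The paper does not prove this lemma---it is quoted from \cite{GY-concavity} without argument---so there is no in-paper proof to compare against. Your argument is correct: the exhaustion by relatively compact smooth subdomains lets you apply the minimum principle on $\overline{\Omega_j}\setminus D_r$, the monotonicity $G_{\Omega_j}\ge G_{\Omega}$ gives $m_r^{(j)}\ge m_r$, and the decreasing convergence $G_{\Omega_j}\searrow G_{\Omega}$ passes the bound to the limit. All ingredients are standard.

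One remark: an alternative that is more in the spirit of the paper (compare the proof of Lemma~\ref{l:connected}) avoids the exhaustion altogether by invoking Lemma~\ref{l:green} directly. Define
\[
v(z)=\begin{cases}G_{\Omega}(z,z_0),& z\in D_r,\\ \max\{G_{\Omega}(z,z_0),\,m_r\},& z\in\Omega\setminus D_r.\end{cases}
\]
Since $G_{\Omega}(\cdot,z_0)\ge m_r$ on $C_r$, the two pieces agree on $C_r$ and $v\ge G_{\Omega}(\cdot,z_0)$ globally, so $v$ is subharmonic on $\Omega$; it is negative and equals $G_{\Omega}(\cdot,z_0)$ near $z_0$, hence $v\in\Delta_0(z_0)$ and Lemma~\ref{l:green} forces $v\le G_{\Omega}(\cdot,z_0)$, i.e.\ $G_{\Omega}(\cdot,z_0)\ge m_r$ on $\Omega\setminus D_r$. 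This yields your key estimate in one stroke. Both approaches are valid; yours is the classical potential-theoretic one, while the gluing argument leverages the extremal characterization already recorded in the paper.
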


The following lemma will be used in the proof of Lemma \ref{l:extra}.

\begin{lemma}
	\label{l:connected}
	For any $a<0$, $\{z\in\Omega: G_{\Omega}(z,z_0)<a\}$ is a connected set. 
\end{lemma}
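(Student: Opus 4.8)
The plan is to argue by contradiction, using the extremal characterization of the Green function in Lemma \ref{l:green} together with a pasting argument for subharmonic functions. Write $g:=G_{\Omega}(\cdot,z_0)$ for brevity, recall that $g$ is negative and subharmonic on $\Omega$, harmonic on $\Omega\setminus\{z_0\}$, continuous as a map into $[-\infty,0)$, and satisfies $g(z)\to-\infty$ as $z\to z_0$. Fix $a<0$ and set $\Omega_a:=\{z\in\Omega:g(z)<a\}$, which is open and nonempty (it contains a punctured neighborhood of $z_0$). Let $U$ denote the connected component of $\Omega_a$ containing $z_0$; since $g\to-\infty$ at $z_0$, the set $U$ contains an open neighborhood $N$ of $z_0$. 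Suppose, toward a contradiction, that $\Omega_a$ is disconnected, and let $V$ be a connected component of $\Omega_a$ with $V\neq U$, so $V\cap U=\emptyset$.

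First I would record the elementary topological facts about $V$. Because $N\subset U$ is open and disjoint from $V$, no point of $N$ can be a limit of points of $V$, so $z_0\notin\overline{V}$; in particular $g$ is harmonic on a neighborhood of $\overline{V}$. Next, since $V$ is a connected component of the open set $\Omega_a$, every point of the boundary $\partial V$ taken relative to $\Omega$ lies in $\overline{\Omega_a}\setminus\Omega_a$, hence satisfies $g\geq a$; combined with continuity of $g$ on $\overline{V}$ (valid as $z_0\notin\overline{V}$) this forces $g=a$ on $\partial V\cap\Omega$, while $g<a$ throughout $V$.

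The heart of the argument is the construction of a competitor that contradicts maximality. I would define
\begin{equation}\nonumber
	\tilde{g}:=\begin{cases} a & \text{on } V,\\ g & \text{on } \Omega\setminus V.\end{cases}
\end{equation}
Observe that $\tilde{g}=\max\{g,a\}$ on $V$ (as $g<a$ there) and $\tilde{g}=g$ off $V$, so $\tilde{g}$ arises by pasting the subharmonic function $g$ on $\Omega$ with the constant subharmonic function $a$ on the open set $V$. The pasting lemma for subharmonic functions applies because on the relative boundary $\partial V\cap\Omega$ one has $\limsup a = a = g$, so the boundary matching condition $\limsup_{z\to\partial V\cap\Omega} a \leq g$ holds; this yields that $\tilde{g}$ is subharmonic on all of $\Omega$. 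Moreover $\tilde{g}<0$ everywhere (it equals $a<0$ on $V$ and $g<0$ elsewhere), and near $z_0$ one has $\tilde{g}=g$ since $z_0\notin\overline{V}$, so $\tilde{g}-\log|w|$ retains the locally finite upper bound near $z_0$ that $g$ enjoys. Hence $\tilde{g}\in\Delta_0(z_0)$, and Lemma \ref{l:green} gives $\tilde{g}\leq g$ on $\Omega$. But on the nonempty set $V$ we have $\tilde{g}=a>g$, a contradiction. Therefore no such $V$ exists and $\Omega_a$ is connected.

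I expect the main obstacle to be the careful verification of the two facts feeding the pasting step: that $\tilde g$ is genuinely subharmonic across $\partial V$, and that $g$ is continuous with value exactly $a$ on the relative boundary $\partial V\cap\Omega$ (the potential worry being boundary points of $V$ that also lie on the closure of another component, or boundary portions escaping to the ideal boundary of $\Omega$). Both are resolved by noting that the pasting lemma only constrains behavior on $\partial V\cap\Omega$, where $g$ is harmonic and continuous and equals $a$, so non-relative-compactness of $V$ causes no difficulty. The remaining verifications that $\tilde g$ lies in $\Delta_0(z_0)$ are routine.
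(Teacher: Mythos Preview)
Your proof is correct and follows essentially the same approach as the paper: both argue by contradiction, take a component $V$ not containing $z_0$, replace $G_\Omega(\cdot,z_0)$ by $\max\{G_\Omega(\cdot,z_0),a\}$ on that component to obtain a subharmonic competitor, and invoke the extremal characterization (Lemma~\ref{l:green}) to reach a contradiction. The only cosmetic difference is that the paper secures subharmonicity by choosing an open $U_1$ with $\overline{V}\subset U_1$ and $U_1\cap V_2=\emptyset$ (so that the two local definitions agree on the overlap $U_1\setminus\overline{V}$), whereas you invoke the pasting lemma directly via the boundary condition $g=a$ on $\partial V\cap\Omega$; these are equivalent ways of checking the same fact.
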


\begin{proof}
	We prove Lemma \ref{l:connected} by contradiction: if not, there a connected component $V_1\not=\emptyset$ of $\{z\in\Omega: G_{\Omega}(z,z_0)<a\}$ such that $z_0\not\in V_1$. Denote that $\{z\in\Omega: G_{\Omega}(z,z_0)<a\}\backslash V_1=V_2$. Then there exists an open subset $U_1$ of $\Omega$, which satisfies that 
	$$V_1\subset\overline{V}_1\subset U_1\,\,\,\,\text{and}\,\,\,\,V_2\in\Omega\backslash U_1.$$
	Set \begin{equation}\nonumber
				v(z)=\left\{
		\begin{array}{ll}
			\max\{G_{\Omega}(z,z_0),a\}, &  z\in U_1\\
			G_{\Omega}(z,z_0), & z\in \Omega\backslash U_1
		\end{array}
		\right. 
	\end{equation}	
on $\Omega$. Note that $v=G_{\Omega}(\cdot,z_0)$ is subharmonic on $\Omega\backslash\overline{V}_1$ and $v=\max\{G_{\Omega}(\cdot,z_0),a\}$	is subharmonic on $U_1$, then we know that $v$ is subharmonic on $\Omega$. Note that 
$$0>v\ge G_{\Omega}(\cdot,z_0)$$
 on $\Omega$. By Lemma \ref{l:green}, we have $v=G_{\Omega}(\cdot,z_0)$,  which contradicts to $G_{\Omega}(\cdot,z_0)<a$ on $V_1$.
Thus, Lemma \ref{l:connected} holds.		
\end{proof}

The following lemma will be used in the proof of Theorem \ref{thm:char}.

\begin{lemma}\label{l:extra}
	Let $a_0<0$, and assume that $\{z\in\Omega:G_{\Omega}(z,z_0)=a_0\}\Subset\Omega.$ Then for any open neighborhood $U$ of $\{z\in\Omega: G_{\Omega}(z,z_0)=a_0\}$, there exists $a_1>a_0$ such that $\{z\in\Omega:a_0<G_{\Omega}(\cdot,z_0)<a_1\}\Subset U$.
\end{lemma}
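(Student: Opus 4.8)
The plan is to argue by contradiction, producing a sequence that escapes to the ideal boundary of $\Omega$ along which $G_\Omega(\cdot,z_0)$ tends to $a_0$, and then to contradict the compactness of $\{G_\Omega(\cdot,z_0)=a_0\}$ via the maximality of the Green function (Lemma \ref{l:green}). Write $G:=G_\Omega(\cdot,z_0)$, which is subharmonic on $\Omega$, harmonic on $\Omega\setminus\{z_0\}$, negative, with a $\log$ pole at $z_0$; recall $\Omega$ is noncompact since it carries a nontrivial Green function. Since $L:=\{G=a_0\}\Subset\Omega$, I would shrink the given $U$ so that $\overline U$ is compact, and fix a relatively compact open $U^{*}$ with $L\subseteq U^{*}\Subset\Omega$ (one may take $U^{*}=U$). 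The first reduction is to the single assertion that the band $\{a_0<G\le a_0+\delta\}$ is relatively compact for some $\delta>0$.

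To prove that claim, suppose no such $\delta$ exists. Fixing a compact exhaustion $K_1\subseteq K_2\subseteq\cdots$ of $\Omega$, for each $n$ the set $\{a_0<G\le a_0+1/n\}$ is not relatively compact, so I can pick $q_n$ in it with $q_n\notin K_n$; then $q_n$ leaves every compact subset of $\Omega$ while $G(q_n)\to a_0^{+}$. For $n$ large $q_n\notin\overline{U^{*}}$, so $q_n$ lies in some connected component $\Omega_n$ of $\Omega\setminus\overline{U^{*}}$. As $L\subseteq U^{*}$, the open set $\Omega\setminus\overline{U^{*}}$ is disjoint from $\{G=a_0\}$, so each of its components lies entirely in $\{G<a_0\}$ or entirely in $\{G>a_0\}$ (here the connectedness statements in the spirit of Lemma \ref{l:connected} are what let one place $q_n$); since $G(q_n)>a_0$ we get $\Omega_n\subseteq\{G>a_0\}$, hence $z_0\notin\Omega_n$. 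Moreover $\partial\Omega_n\subseteq\partial U^{*}\cap\{G>a_0\}$, a compact set on which $G\ge a_0+m$ for some $m>0$, and automatically $m<-a_0$, so $a_0+m<0$. The heart of the argument is then to contradict the maximality of $G$: set
$$\tilde v:=\max\{G,\ a_0+m\}\ \text{on}\ \Omega_n,\qquad \tilde v:=G\ \text{on}\ \Omega\setminus\Omega_n.$$
Because $G\ge a_0+m$ near $\partial\Omega_n$, the two definitions agree there, so $\tilde v$ is subharmonic on $\Omega$; it is negative, equals $G$ near $z_0$ (so $\tilde v-\log|w|$ is bounded above near $z_0$), whence $\tilde v\in\Delta_0(z_0)$. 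For $n$ large, $G(q_n)<a_0+m$, so $\tilde v(q_n)=a_0+m>G(q_n)$ while $\tilde v\ge G$ everywhere, contradicting $G=\sup_{v\in\Delta_0(z_0)}v$ from Lemma \ref{l:green}. I expect the main obstacle to be precisely the pasting step, i.e.\ checking that $\tilde v$ is subharmonic across $\partial\Omega_n$; this is exactly where the hypothesis $L\Subset\Omega$ is used, through the uniform bound $G\ge a_0+m$ on the relevant boundary.

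Granting that $\{a_0<G\le a_0+\delta\}\Subset\Omega$, I would finish by a purely topological shrinking argument. The closed set $C_{a_1}:=\{a_0\le G\le a_1\}$ is compact for every $a_1\in(a_0,a_0+\delta]$, being the union of the compact set $L$ with a relatively compact band. As $a_1\downarrow a_0$ these compacta decrease, with $\bigcap_{a_1>a_0}C_{a_1}=\{G=a_0\}=L\subseteq U$; hence the decreasing compact sets $C_{a_1}\setminus U$ have empty intersection, and so $C_{a_1}\setminus U=\emptyset$ for some $a_1\in(a_0,a_0+\delta]$, i.e.\ $\{a_0\le G\le a_1\}\subseteq U$. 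Since $\overline{\{a_0<G<a_1\}}\subseteq\{a_0\le G\le a_1\}\subseteq U$ and the latter is compact, we obtain $\{a_0<G<a_1\}\Subset U$, which is the assertion of the lemma.
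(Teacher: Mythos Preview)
Your proof is correct. The paper takes a somewhat different, more topological route: after shrinking $U$ so that $U\Subset\Omega$ and setting $\tilde U=U\cup\{G_\Omega(\cdot,z_0)<a_0\}$, it chooses $a_1>a_0$ so that the interval $[a_0,a_1]$ misses the compact image $G_\Omega(\partial U,z_0)$, which immediately yields $\{a_0<G_\Omega(\cdot,z_0)<a_1\}\cap U\Subset U$; it then invokes the connectedness of the sublevel set $\{G_\Omega(\cdot,z_0)<a_1\}$ (Lemma \ref{l:connected}) and a clopen argument to get $\{G_\Omega(\cdot,z_0)<a_1\}\subset\tilde U$, whence $\{a_0<G_\Omega(\cdot,z_0)<a_1\}\subset U$. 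Your argument instead first proves relative compactness of a thin band by a direct competitor construction against the maximality in Lemma \ref{l:green}---essentially inlining the pasting trick used to prove Lemma \ref{l:connected}---and then finishes with a clean decreasing-compacta argument. The paper's version is shorter because the potential theory is factored into Lemma \ref{l:connected}; yours is more self-contained and makes the role of the hypothesis $L\Subset\Omega$ very transparent. Two minor remarks: the gluing of $\tilde v$ should be justified by the standard subharmonic pasting lemma (one needs $\limsup_{\Omega_n\ni z\to p}(a_0+m)\le G(p)$ on $\partial\Omega_n$, not merely agreement of the branches on $\partial\Omega_n$), which you rightly flagged as the delicate point and which holds exactly because $G\ge a_0+m$ there; and your parenthetical appeal to Lemma \ref{l:connected} is in fact unnecessary, since the containment $\Omega_n\subset\{G>a_0\}$ follows from continuity of $G$ and $L\subset U^{*}$ alone.
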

\begin{proof}
	As $\{z\in\Omega: G_{\Omega}(z,z_0)<a_0\}$ is a connected set by Lemma \ref{l:connected} and $\{z\in\Omega:G_{\Omega}(z,z_0)=a_0\}\Subset\Omega$, without loss of generality, assume that $\tilde U:=U\cup\{z\in\Omega: G_{\Omega}(z,z_0)<a_0\}$ is connected and $U\Subset\Omega$. 
	
	Since $\{z\in\Omega: G_{\Omega}(z,z_0)=a_0\}\subset U\Subset\Omega$, there exists $a_1>a_0$ satisfying 
	$$[a_0,a_1]\cap\{a<0:\exists z\in\partial U\,s.t.\,G_{\Omega}(z,z_0)=a\}=\emptyset,$$
	which shows that 
	$$\{z\in\Omega:a_0<G_{\Omega}(\cdot,z_0)<a_1\}\cap U\Subset U.$$
	Then we have
	\begin{displaymath}
		\begin{split}
		&\overline{\tilde U\cap\{z\in\Omega:G_{\Omega}(z,z_0)<a_1\}}\\
		=&\{z\in\Omega: G_{\Omega}(z,z_0)\le a_0\}\cup \overline{U\cap\{z\in\Omega:a_0<G_{\Omega}(z,z_0)<a_1\}}\\
		\subset&\tilde U	
		\end{split}
	\end{displaymath}
	As $\tilde U$ and $\{z\in\Omega: G_{\Omega}(z,z_0)<a_1\}$ are  connected open sets, we have $\{z\in\Omega: G_{\Omega}(z,z_0)<a_1\}\subset\tilde U$. Thus, we get $\{z\in\Omega:a_0<G_{\Omega}(\cdot,z_0)<a_1\}=\{z\in\Omega:a_0<G_{\Omega}(\cdot,z_0)<a_1\}\cap U\Subset U$.
\end{proof}

We recall the  coarea formula.
\begin{lemma}[see \cite{federer}]
	\label{l:coarea}Suppose that $\Omega$ is an open set in $\mathbb{R}^n$ and $u\in C^1(\Omega)$. Then for any $g\in L^1(\Omega)$, 
	$$\int_{\Omega}g(x)|\bigtriangledown u(x)|dx=\int_{\mathbb{R}}\left(\int_{u^{-1}(t)}g(x)dH_{n-1}(x)\right)dt,$$
	where $H_{n-1}$ is the $(n-1)$-dimensional Hausdorff measure.
\end{lemma}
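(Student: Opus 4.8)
The plan is to prove the formula first for $g=\mathbb{I}_A$ with $A\subset\Omega$ Borel, and then recover the general case by standard measure-theoretic machinery. Indeed, once we establish
$$\int_A|\nabla u|\,dx=\int_{\mathbb{R}}H_{n-1}\big(A\cap u^{-1}(t)\big)\,dt\qquad(\star)$$
for all Borel $A$, linearity over simple functions together with the monotone convergence theorem yields the identity for every nonnegative $g\in L^1(\Omega)$, and splitting $g=g^{+}-g^{-}$ finishes the proof. To make sense of the right-hand side of $(\star)$ one first checks, via a Vitali-type covering argument or an approximation of $u^{-1}(t)$ by thin tubular neighborhoods, that $t\mapsto H_{n-1}\big(A\cap u^{-1}(t)\big)$ is measurable; both sides of $(\star)$ then define Borel measures in the set variable $A$, so it suffices to verify $(\star)$ for $A$ ranging over a basis of small open sets.

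Next I would isolate the critical set $N:=\{x\in\Omega:\nabla u(x)=0\}$. On $N$ the left-hand integrand $|\nabla u|$ vanishes, so $A\cap N$ contributes nothing to the left side of $(\star)$. For the right side, Sard's theorem gives that the set of critical values $u(N)$ has Lebesgue measure zero in $\mathbb{R}$; and whenever $t\notin u(N)$ the level set $u^{-1}(t)$ meets $N$ in the empty set, so $H_{n-1}\big((A\cap N)\cap u^{-1}(t)\big)=0$ for a.e.\ $t$. Hence the contribution of $A\cap N$ to the right side also vanishes, and it suffices to prove $(\star)$ for Borel sets $A\subset\Omega\setminus N$ consisting of regular points.

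On the regular set the formula is local and follows from straightening the level sets. Fix $x_0$ with $\nabla u(x_0)\neq0$; after permuting coordinates assume $\partial u/\partial x_n\neq0$ on a neighborhood $U$ of $x_0$. The map $\Phi(x)=(x_1,\dots,x_{n-1},u(x))$ is then a $C^1$ diffeomorphism of $U$ onto its image with Jacobian $\partial u/\partial x_n$, so by the implicit function theorem each level set $\{u=t\}\cap U$ is a graph $x_n=\gamma(x',t)$. Differentiating $u(x',\gamma(x',t))=t$ gives $\partial_i\gamma=-\partial_i u/\partial_n u$, whence $\sqrt{1+|\nabla_{x'}\gamma|^2}=|\nabla u|/|\partial u/\partial x_n|$, which is exactly the area factor relating $dH_{n-1}$ on the graph to $dx'$. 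Combining this with the change of variables $dx=|\partial u/\partial x_n|^{-1}\,dx'\,dt$ and Fubini's theorem yields $(\star)$ for every Borel $A\subset U$. Covering $\Omega\setminus N$ by countably many such patches, and using that both sides of $(\star)$ are measures so the patches may be disjointified without overcounting, establishes $(\star)$ in general.

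The main obstacle is not any single step but the careful bookkeeping needed to glue the local identities into a global one: one must verify the measurability of the slice integrals, control the critical set $N$ rigorously through Sard's theorem, and organize the countable family of straightening charts (via a Borel disjointification or a partition of unity) so that the local contributions add up to the global integral without double counting. Once $(\star)$ is secured for all Borel $A$, the passage back to general $g\in L^1(\Omega)$ through monotone approximation and the decomposition $g=g^{+}-g^{-}$ is routine.
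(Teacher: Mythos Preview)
The paper does not prove this lemma: it is simply quoted from Federer with the tag ``see \cite{federer}'' and used as a black box in the proof of Lemma~\ref{l:BZ2}. So there is no in-paper argument to compare against; your proposal is an independent proof sketch of a classical result.

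That said, there is a genuine gap in your sketch. You invoke Sard's theorem to conclude that the set of critical values $u(N)$ has Lebesgue measure zero in $\mathbb{R}$, but the hypothesis here is only $u\in C^1(\Omega)$ with $\Omega\subset\mathbb{R}^n$. Sard's theorem for maps into $\mathbb{R}$ requires $C^n$ regularity; for $n\ge 2$ there are Whitney-type $C^1$ functions whose critical values fill an interval, so your claim fails as stated. The coarea formula is nonetheless true under the $C^1$ hypothesis, but the critical set must be handled differently: one shows directly that $\int_{\mathbb{R}} H_{n-1}\big(N\cap u^{-1}(t)\big)\,dt=0$ by covering $N$ with small cubes $Q$ on which the oscillation of $u$ is at most $\varepsilon\,\mathrm{diam}(Q)$ (possible since $\nabla u=0$ on $N$ and $u\in C^1$), and then estimating the slice integral over each cube by $C\varepsilon(\mathrm{diam}\,Q)^n$. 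Summing and letting $\varepsilon\to 0$ gives the vanishing. The rest of your outline --- reduction to indicator functions, local straightening via the implicit function theorem on the regular set, and patching --- is the standard route and is fine.
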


Assume that $\Omega\subset\mathbb{C}$ is a domain bounded by finite analytic curves. Denote that 
$$s(t):=e^{2t}\lambda(\{G_{\Omega}(\cdot,z_0)<-t\})$$
 for $t\ge0$,
	where $\lambda(A)$ is the Lebesgue measure for any measurable subset $A$ of $\mathbb{C}$.

\begin{lemma}
	[\cite{BZ15}]\label{l:BZ}
	The function $s(t)$ is decreasing on $[0,+\infty)$.
\end{lemma}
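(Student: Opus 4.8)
The plan is to reduce the assertion to a pointwise differential inequality for the area function $A(t):=\lambda(\{G_{\Omega}(\cdot,z_0)<-t\})$. Since $s(t)=e^{2t}A(t)$, we have $s'(t)=e^{2t}\bigl(2A(t)+A'(t)\bigr)$, so it suffices to prove $-A'(t)\ge 2A(t)$ for almost every $t\ge0$ and then integrate using the absolute continuity of $A$. Write $u:=G_{\Omega}(\cdot,z_0)$, which is harmonic on $\Omega\setminus\{z_0\}$, extends continuously to $\overline{\Omega}$ with boundary value $0$ (as $\Omega$ is bounded by finitely many analytic curves, it is regular for the Dirichlet problem), and satisfies that $u-\log|z-z_0|$ is bounded near $z_0$. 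In particular $\{u<-t\}\Subset\Omega$ for $t>0$, and by Sard's theorem almost every value $-t$ is a regular value, so that $\{u=-t\}$ is a finite union of smooth curves forming the boundary of $\{u<-t\}$.

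The three ingredients are as follows. First, applying the coarea formula (Lemma \ref{l:coarea}) to $u$ with weight $g=\mathbb{I}_{\{u<-t\}}/|\nabla u|$ gives
\[
A(t)=\int_{-\infty}^{-t}\left(\int_{\{u=\sigma\}}\frac{dH_1}{|\nabla u|}\right)d\sigma,
\]
so that $-A'(t)=\int_{\{u=-t\}}\frac{dH_1}{|\nabla u|}$ for a.e. $t$. Second, since $\Delta u=2\pi\delta_{z_0}$, the divergence theorem on $\{u<-t\}\setminus \overline{B(z_0,\varepsilon)}$ (letting $\varepsilon\to0$) yields the flux identity $\int_{\{u=-t\}}|\nabla u|\,dH_1=2\pi$, the outer normal derivative of $u$ on the level curve being exactly $|\nabla u|$. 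Third, Cauchy--Schwarz applied to $\int_{\{u=-t\}}1\,dH_1$, splitting $1=|\nabla u|^{-1/2}\cdot|\nabla u|^{1/2}$, gives
\[
\bigl(H_1(\{u=-t\})\bigr)^2\le \left(\int_{\{u=-t\}}\frac{dH_1}{|\nabla u|}\right)\left(\int_{\{u=-t\}}|\nabla u|\,dH_1\right)=2\pi\,(-A'(t)).
\]
Combining this with the planar isoperimetric inequality $\bigl(H_1(\{u=-t\})\bigr)^2\ge 4\pi\,\lambda(\{u<-t\})=4\pi A(t)$ (valid for the smooth boundary of the relatively compact region $\{u<-t\}$, which is moreover connected by Lemma \ref{l:connected}) gives $2\pi(-A'(t))\ge 4\pi A(t)$, i.e. $-A'(t)\ge 2A(t)$, as desired. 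Integrating the equivalent inequality $\tfrac{d}{dt}\bigl(e^{2t}A(t)\bigr)\le0$ then shows that $s(t)$ is decreasing.

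The main obstacle is the measure-theoretic regularity underlying these manipulations. One must justify that $1/|\nabla u|$ is integrable over sublevel sets: near the pole $z_0$ this weight tends to $0$, while near the isolated nondegenerate critical points of the harmonic function $u$ one has $|\nabla u|\sim \mathrm{dist}$, which is locally integrable after slicing, so that the coarea formula applies and $A$ is absolutely continuous with $A'(t)=-\int_{\{u=-t\}}|\nabla u|^{-1}\,dH_1$ a.e. One must also invoke Sard's theorem to restrict to regular values, for which $\{u=-t\}$ is a genuine smooth boundary curve, so that both the flux computation and the isoperimetric inequality are legitimately applicable; the a.e. inequality then upgrades to global monotonicity by absolute continuity. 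The equality case of the isoperimetric step (circles) recovers $s(t)\equiv\mathrm{const}$ for the disc, consistent with the sharpness of the argument.
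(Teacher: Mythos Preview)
Your argument is correct and follows essentially the same route as the paper (which recalls the proof from \cite{BZ15} inside the proof of Lemma~\ref{l:BZ2}): coarea formula to compute $A'(t)$, Cauchy--Schwarz together with the flux identity $\int_{\{u=-t\}}|\nabla u|\,dH_1=2\pi$, and the isoperimetric inequality to conclude $-A'(t)\ge 2A(t)$. Your write-up is slightly more detailed on the measure-theoretic justifications (Sard's theorem, absolute continuity, local integrability of $|\nabla u|^{-1}$) and makes the flux identity explicit, but the core strategy is identical.
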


The following lemma gives a characterization for $s(t)$ degenerating to a constant function.

\begin{lemma}
	\label{l:BZ2}If the function $s(t)$ is a constant function on $[0+\infty)$, then $\Omega$ is a disc.
\end{lemma}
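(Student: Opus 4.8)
The plan is to establish the \emph{equality case} of the monotonicity in Lemma \ref{l:BZ}: the constancy of $s$ forces the isoperimetric inequality underlying that monotonicity to be an equality for (almost) every level, and the equality case of the planar isoperimetric inequality then identifies every sublevel set of $G_{\Omega}(\cdot,z_0)$ as a round disc.

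Write $u=G_{\Omega}(\cdot,z_0)$ and $\phi(t):=\lambda(\{u<-t\})=e^{-2t}s(t)$. If $s\equiv c_0$ is constant then $\phi(t)=c_0e^{-2t}$, so $\phi$ is smooth and $-\phi'(t)=2\phi(t)$ for every $t\ge 0$. Since $\Omega$ is bounded by finitely many analytic curves, $u$ is real-analytic on $\Omega\setminus\{z_0\}$ and tends to $0$ at $\partial\Omega$; hence for $t>0$ the sublevel set $R_t:=\{u<-t\}$ is relatively compact in $\Omega$ (cf.\ Lemma \ref{l:G-compact}) and connected (Lemma \ref{l:connected}), and for all but a discrete set of $t$ the value $-t$ is regular, so $\{u=-t\}$ is a finite union of analytic Jordan curves on which $|\nabla u|\neq 0$. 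For such regular $t$ I would run three standard computations. First, the coarea formula (Lemma \ref{l:coarea}) applied to $u$ gives $-\phi'(t)=\int_{\{u=-t\}}|\nabla u|^{-1}\,dH_1$. Second, since $u$ is harmonic on $\Omega\setminus\{z_0\}$ with a logarithmic pole at $z_0$, Green's theorem yields the flux identity $\int_{\{u=-t\}}|\nabla u|\,dH_1=2\pi$ (the outward normal derivative on $\partial R_t$ being $|\nabla u|$). Third, Cauchy--Schwarz gives
\[
L(t)^2=\Big(\int_{\{u=-t\}}dH_1\Big)^2\le\Big(\int_{\{u=-t\}}\frac{dH_1}{|\nabla u|}\Big)\Big(\int_{\{u=-t\}}|\nabla u|\,dH_1\Big)=2\pi\,(-\phi'(t)),
\]
where $L(t)$ denotes the length of $\{u=-t\}=\partial R_t$.

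Combining these with the planar isoperimetric inequality $L(t)^2\ge 4\pi\,\phi(t)$ (valid for the bounded open set $R_t$, of area $\phi(t)$ and perimeter $L(t)$) yields
\[
-\phi'(t)\ \ge\ \frac{L(t)^2}{2\pi}\ \ge\ 2\phi(t).
\]
This reproves Lemma \ref{l:BZ}; but under our hypothesis the two ends coincide, $-\phi'(t)=2\phi(t)$, so both inequalities are equalities for every regular $t>0$. In particular the isoperimetric equality $L(t)^2=4\pi\phi(t)$ holds, and its equality case (together with the connectedness of $R_t$ from Lemma \ref{l:connected}) forces $R_t$ to be a round disc for every regular $t>0$. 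Finally, as $t\downarrow 0$ the sets $R_t$ increase to $\{u<0\}=\Omega$; choosing regular values $t_n\downarrow 0$ exhibits $\Omega=\bigcup_n R_{t_n}$ as an increasing union of nested round discs $D(c_n,r_n)$. The nesting gives $|c_n-c_{n+1}|\le r_{n+1}-r_n$, so the radii increase to some finite $R$ (as $\Omega$ is bounded) and the centers form a Cauchy sequence; hence the union is a round disc, i.e.\ $\Omega$ is a disc.

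I expect the main obstacle to be the careful handling of regularity: justifying the coarea and flux computations on the level curves, controlling the discrete set of critical values of $u$ so that equality is obtained for \emph{enough} $t$ (a set accumulating at $0$ suffices), and invoking the correct equality case of the isoperimetric inequality for the possibly multiply connected region $R_t$ --- the point being precisely that equality rules out holes, so that a multiply connected $\Omega$ (for which $R_t$ inherits a hole for small $t$) cannot have $s$ constant. The remaining steps (flux $=2\pi$, Cauchy--Schwarz, and "a nested union of discs is a disc") are routine.
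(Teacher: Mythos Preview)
Your proposal is correct and follows essentially the same route as the paper: both arguments reprove Lemma \ref{l:BZ} via coarea, the flux identity $\int_{\{u=-t\}}|\nabla u|\,dH_1=2\pi$, Cauchy--Schwarz, and the isoperimetric inequality, and then observe that constancy of $s$ forces the isoperimetric inequality to be an equality at every regular level, so each sublevel set $\{G_\Omega(\cdot,z_0)<-t\}$ is a round disc. You are somewhat more careful than the paper in spelling out the passage from ``each $R_t$ is a disc'' to ``$\Omega$ is a disc'' (the nested-discs argument) and in flagging the regularity issues; the paper simply asserts the conclusion at that point.
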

\begin{proof}
	Firstly, we recall the proof of Lemma \ref{l:BZ} in \cite{BZ15}.
	
	Denote that $h(t)=\lambda(\{G_{\Omega}(\cdot,z_0)<-t\}).$ $f(t):=\log s(t)=2t+\log h(t)$. Let $-t$ be a regular value of $G_{\Omega}(\cdot,z_0)$, then we have 
	$$f'(t)=2+\frac{h'(t)}{h(t)}.$$
	Lemma \ref{l:coarea} shows that 
	$$h(t)=\int_{-\infty}^{-t}\int_{\{G_{\Omega}(\cdot,z_0)=s\}}\frac{d\sigma}{|\nabla G|}ds,$$
	which implies that
	\begin{equation}\nonumber
		h'(t)=-\int_{\{G_{\Omega}(\cdot,z_0)=-t\}}\frac{d\sigma}{|\nabla G|}.
	\end{equation}
	Using Cauchy-Schwarz inequality, we have
	$$-h'(t)\ge \frac{(\sigma(\{G_{\Omega}(\cdot,z_0)=-t\}))^2}{\int_{\{G_{\Omega}(\cdot,z_0)=-t\}}|\nabla G|d\sigma}=\frac{(\sigma(\{G_{\Omega}(\cdot,z_0)=-t\}))^2}{2\pi}.$$
	The isoperimetric inequality shows that 
	\begin{equation}
		\label{eq:1231a}(\sigma(\{G_{\Omega}(\cdot,z_0)=-t\}))^2\ge 4\pi\lambda(\{G_{\Omega}(\cdot,z_0)<-t\}),
	\end{equation}
	then we have $f'(t)\le 0$.
	
	Now, we prove Lemma \ref{l:BZ2}.
	If $s(t)$ is a constant function, then $f'\equiv0$, which implies that inequality \eqref{eq:1231a} becomes an equality. Following from the characterization of the isoperimetric inequality becoming an equality, we know that $\{G_{\Omega}(\cdot,z_0)<-t\}$ is a disc for any regular value $-t$ of $G_{\Omega}(\cdot,z_0)$. Thus, $\Omega$ is a disc.
\end{proof}

\subsection{Other useful lemmas}In this section, we give some lemmas, which will be used in the proofs of the main theorems.

Let $\theta_0\in(0,\pi]$, and denote that $L_{\theta_0}:=\{z\in\Delta:Arg(z)\in(0,\theta_0)\}$, where $Arg(z)=\theta\in(-\pi,\pi]$ for any $z=re^{i\theta}\in\mathbb{C}$.

\begin{lemma}
	\label{l:zero point}
	Let $v$ be a subharmonic function on the unit disc $\Delta$ satisfying that $v=k\log|z|$ on $L_{\theta_0}$, where $k>0$ is a constant. Then $v-k\log|z|$ is subharmonic on $\Delta$. 
\end{lemma}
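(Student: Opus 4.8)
The plan is to strip off the harmonic singularity $-k\log|z|$ away from the origin and then recognize the origin as a removable singularity for the resulting subharmonic function. Set $w:=v-k\log|z|$, a priori defined on $\Delta\setminus\{0\}$. Since $\log|z|$ is harmonic on $\Delta\setminus\{0\}$ and $v$ is subharmonic there, $w$ is subharmonic on $\Delta\setminus\{0\}$, and by hypothesis $w\equiv 0$ on the open sector $L_{\theta_0}$ (so $w\not\equiv-\infty$). The statement ``$v-k\log|z|$ is subharmonic on $\Delta$'' should be read as: $w$ extends to a subharmonic function across $0$, the value at $0$ being $\limsup_{z\to0}w(z)$. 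By the removable-singularity theorem for subharmonic functions (a subharmonic function on a punctured disc that is bounded above near the puncture extends subharmonically across it, with value $\limsup_{z\to0}w(z)$ at $0$), the whole lemma reduces to the single claim that $w$ is bounded above on some punctured neighbourhood of $0$; equivalently, that the Lelong number (Riesz mass) $\nu$ of $v$ at the origin satisfies $\nu\ge k$.

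To control $\nu$ I would fix a small disc $\Delta_\rho$ and use the Riesz decomposition $v(z)=\int_{\Delta_\rho}\log|z-\zeta|\,d\mu(\zeta)+H(z)$, where $\mu\ge0$ is the Riesz measure of $v$ and $H$ is harmonic on $\Delta_\rho$. Writing $\nu:=\mu(\{0\})$ and splitting off the atom at the origin gives $v(z)=\nu\log|z|+p(z)+H(z)$, where $p(z):=\int_{\zeta\ne0}\log|z-\zeta|\,d\mu(\zeta)$. Two standard facts about $p$ are used: first, since $\log|z-\zeta|\le\log(2\rho)$ for $z,\zeta\in\Delta_\rho$, the potential $p$ is bounded above, $p\le\mu(\Delta_\rho\setminus\{0\})\log(2\rho)$; second, since $p$ carries no mass at $0$, its Lelong number at $0$ vanishes, i.e. the circular means $m_p(r):=\frac{1}{2\pi}\int_0^{2\pi}p(re^{i\theta})\,d\theta$ satisfy $m_p(r)/\log r\to0$ as $r\to0$ (this also follows from the identity $m_p(r)=\int_{\zeta\ne0}\log\max(r,|\zeta|)\,d\mu(\zeta)$). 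Now the sector hypothesis enters: on $L_{\theta_0}$ we have $w=0$, hence $p(z)=(k-\nu)\log|z|-H(z)$ there. Averaging $p$ over the circle $\{|z|=r\}$, splitting the integral into the arc $\theta\in(0,\theta_0)$, where this exact formula applies, and its complement, where only $p\le\log(2\rho)\cdot\mu(\Delta_\rho\setminus\{0\})$ is used, yields $m_p(r)\le\frac{\theta_0}{2\pi}(k-\nu)\log r+O(1)$. Dividing by $\log r<0$ and letting $r\to0$, the left-hand side tends to $0$ while the right-hand side tends to $\frac{\theta_0}{2\pi}(k-\nu)$; since $\theta_0>0$ this forces $\nu\ge k$.

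With $\nu\ge k$ established, the upper estimate $w(z)=(\nu-k)\log|z|+p(z)+H(z)\le(\nu-k)\log|z|+C\le C$ near $0$ (using $\log|z|<0$, the upper bound on $p$, and the local boundedness of $H$) shows $w$ is bounded above, and the removable-singularity theorem then completes the proof. The main obstacle is exactly the inequality $\nu\ge k$: the sector condition only gives ``directional'' information, namely that $v$ decays like $k\log|z|$ along a full sector as $z\to0$, and this must be upgraded to a genuine lower bound on the full Riesz mass at the origin. The averaging argument above is the device that accomplishes this, by playing the exact value of $p$ on the sector against the mere upper boundedness of $p$ on the complementary arc. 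I expect the restriction $\theta_0\le\pi$ to be inessential for this argument — any $\theta_0\in(0,2\pi)$ works — and to reflect only the context in which the lemma is later applied.
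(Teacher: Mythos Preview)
Your proof is correct and follows essentially the same route as the paper's: both reduce the claim to the inequality $\nu\ge k$ on the Lelong number of $v$ at the origin, and both obtain it by averaging over circles, using the exact value on the sector arc $(0,\theta_0)$ against an upper bound on the complementary arc. The only cosmetic differences are that you phrase things via the explicit Riesz decomposition $v=\nu\log|z|+p+H$ and argue directly, whereas the paper works with the single subharmonic function $\tilde v=v-\nu\log|z|$ and argues by contradiction; your $p+H$ is exactly their $\tilde v$, and your statement $m_p(r)/\log r\to 0$ is precisely their $\nu(dd^c\tilde v,o)=0$.
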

\begin{proof}
 It suffices to prove the Lelong number $v(dd^cv,o)\ge k$, and we prove it by contradiction: if not, we have $v(dd^c\tilde v,o)=0$, where $\tilde v:=v-v(dd^cv,o)\log |z|$ is a subharmonic function on $\Delta$. Denote that $C:=\sup_{z\in \Delta_{\frac{1}{2}}}\tilde v<+\infty$.
We have 
 \begin{equation}
 	\nonumber
 	\begin{split}
 		v(dd^c\tilde v,o)&=\lim_{r\rightarrow 0}\frac{\frac{1}{2\pi }\int_{0}^{2\pi}\tilde v(re^{i\theta})d\theta}{\log r}\\
 		&\ge\lim_{r\rightarrow0}\frac{C+\frac{1}{2\pi}\int_{0}^{\theta_0}(k-v(dd^cv,o))\log|r|d\theta}{\log r}\\
 		&=\frac{(k-v(dd^cv,o))\theta_0}{2\pi}>0,
 	\end{split}
 \end{equation}
which contradicts to $v(dd^c\tilde v,o)=0$, hence $v-k\log|z|$ is subharmonic on $\Delta$.
\end{proof}

\begin{lemma}
	\label{l:harmonic subharmonic}
	Let $v$ be a subharmonic function on the unit disc $\Delta$ satisfying that $v=0$ on $L_{\theta_0}$. Then $v(o)=0$.
\end{lemma}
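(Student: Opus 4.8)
The plan is to sandwich $v(o)$ between two inequalities, both coming from subharmonicity, and to exploit the positive angular width $\theta_0$ of the sector to force $v(o)=0$.

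First I would record the lower bound. Since $v$ is subharmonic it is upper semicontinuous, so $v(o)\ge\limsup_{z\to o}v(z)$. As $L_{\theta_0}$ is an open sector with vertex at $o$, it contains points arbitrarily close to $o$ on which $v\equiv 0$; hence $\limsup_{z\to o}v(z)\ge 0$ and therefore $v(o)\ge 0$. In particular $v(o)$ is finite, which is what will let me manipulate it algebraically below.

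The main step is the reverse inequality $v(o)\le 0$. Write $A(r):=\frac{1}{2\pi}\int_0^{2\pi}v(re^{i\theta})\,d\theta$ for the circular average. The sub-mean value inequality for subharmonic functions gives $v(o)\le A(r)$ for every small $r$. On the other hand, fix $\epsilon>0$; by upper semicontinuity at $o$ there is $\delta>0$ such that $v(z)<v(o)+\epsilon$ whenever $|z|<\delta$. For $r<\delta$ I split the average into the arc $\{\theta\in(0,\theta_0)\}$, on which $v$ vanishes, and its complement of angular measure $2\pi-\theta_0$, on which $v<v(o)+\epsilon$ holds pointwise; this yields
$$A(r)\le\frac{2\pi-\theta_0}{2\pi}\bigl(v(o)+\epsilon\bigr).$$
Combining with $v(o)\le A(r)$ and letting $\epsilon\to 0$ gives $v(o)\le\frac{2\pi-\theta_0}{2\pi}\,v(o)$, i.e. $\frac{\theta_0}{2\pi}\,v(o)\le 0$. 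Since $\theta_0>0$ this forces $v(o)\le 0$, and together with the lower bound from the first step we conclude $v(o)=0$.

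The argument is essentially elementary, and I do not expect a serious obstacle. The two points requiring a little care are that the estimate on the complement arc is a genuine pointwise upper bound (replacing $v$ by the constant $v(o)+\epsilon$ on a set of finite measure is legitimate regardless of the sign of $v(o)+\epsilon$), and that $v(o)$ be known to be finite before performing the algebra with the contraction factor --- precisely what the first step guarantees. The crux is simply to notice that vanishing on a sector of positive opening angle makes the factor $\frac{2\pi-\theta_0}{2\pi}$ strictly less than $1$, so the inequality $v(o)\le\frac{2\pi-\theta_0}{2\pi}v(o)$ is incompatible with $v(o)>0$.
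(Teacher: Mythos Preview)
Your proof is correct and follows essentially the same approach as the paper: both combine the sub-mean value inequality, the vanishing of $v$ on the sector of positive angular width, and upper semicontinuity at the origin to obtain an inequality of the form $v(o)\le\frac{2\pi-\theta_0}{2\pi}v(o)$ (equivalently $v(o)\ge\frac{2\pi}{2\pi-\theta_0}v(o)$), which together with $v(o)\ge 0$ forces $v(o)=0$. The only cosmetic difference is that the paper first integrates and then extracts a single point $z_r$ on each circle with $v(z_r)\ge\frac{2\pi}{2\pi-\theta_0}v(o)$ before invoking upper semicontinuity, whereas you invoke upper semicontinuity first to bound $v$ uniformly by $v(o)+\epsilon$ on a small disc and then integrate; the two orderings are equivalent.
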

\begin{proof}
	As $v$ is upper semicontinuous, $v(o)\ge0$. As $v$ is subharmonic, it follows from the mean value inequality that 
	\begin{equation}
		\nonumber
		v(o)\le\frac{1}{2\pi}\int_{0}^{2\pi}v(re^{i\theta})d\theta=\frac{1}{2\pi}\int_{\theta_0}^{2\pi}v(re^{i\theta})d\theta
	\end{equation}
holds for any $r>0$. Then there is a point $z_r\in\{|z|=r\}$ such that $$v(z_r)\ge\frac{2\pi}{2\pi-\theta_0}v(o)$$ for any $r$. As $v$ is upper semicontinuous, we have
$$v(o)\ge\limsup_{r\rightarrow0}v(z_r)\ge\limsup_{r\rightarrow0}\frac{2\pi}{2\pi-\theta_0}v(o)=\frac{2\pi}{2\pi-\theta_0}v(o),$$
which shows that $v(o)=0$.
\end{proof}

Let us give two lemmas on concave functions.

\begin{lemma}\label{l:concave}
    Suppose $x(t): \mathbb{R}^+\to\mathbb{R}^+$ is a strictly decreasing function. If $-\log x(t)$ is convex, and $\log x(t)+t$ is lower bounded on $\mathbb{R}^+$, then $x(-\log r)$ is concave with respect to $r\in (0,1)$.
\end{lemma}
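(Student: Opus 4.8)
The plan is to pass to the variable $t=-\log r\in(0,+\infty)$ and to the function $y(t):=-\log x(t)$, which is convex by hypothesis. In these terms the claim becomes: $\phi(r):=x(-\log r)=e^{-y(-\log r)}$ is concave on $r\in(0,1)$. I would reduce this to a single pointwise inequality. Differentiating twice in $r$, using $\mathrm{d}t/\mathrm{d}r=-1/r$, gives the identity
\[
\phi''(r)=\frac{1}{r^2}\,e^{-y(t)}\big((y'(t))^2-y'(t)-y''(t)\big),
\]
so that $\phi$ is concave precisely when the bracket is nonpositive, i.e. when $y''\ge y'(y'-1)$.

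The core of the argument is then to establish the two bounds $0\le y'\le 1$, which force the bracket to be nonpositive. The lower bound is immediate: since $x$ is strictly decreasing and positive, $y=-\log x$ is non-decreasing, hence $y'\ge0$. The upper bound is exactly where the hypothesis on $\log x(t)+t$ is used. Set $g(t):=y(t)-t=-(\log x(t)+t)$. The assumption that $\log x(t)+t$ is bounded below says precisely that $g$ is bounded above on $(0,+\infty)$, and $g$ is convex, being the sum of the convex function $y$ and the affine function $-t$. A convex function bounded above on the half-line must be non-increasing: otherwise its (non-decreasing) right derivative would be strictly positive at some point, forcing $g\to+\infty$. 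Hence $g'\le0$, i.e. $y'\le1$. With $0\le y'\le1$ in hand, $y'(y'-1)\le0$ while $y''\ge0$ by convexity, so the bracket $(y')^2-y'-y''=y'(y'-1)-y''\le0$ and $\phi''\le0$.

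The one genuinely delicate point, and the step I expect to require the most care, is that $y=-\log x$ is merely convex, so $x$ need not be $C^2$ and the displayed identity is a priori only formal. I would make it rigorous by mollification: on any compact subinterval convolve $y$ with a smooth mollifier to obtain smooth convex approximants $y_\varepsilon$. Since $y_\varepsilon'$ is an average of the monotone distributional derivative of $y$, whose essential range lies in $[0,1]$ by the two bounds above, each $y_\varepsilon$ again satisfies $0\le y_\varepsilon'\le1$. The computation then applies verbatim to $\phi_\varepsilon(r):=e^{-y_\varepsilon(-\log r)}$, yielding $\phi_\varepsilon''\le0$, so each $\phi_\varepsilon$ is concave; as $y_\varepsilon\to y$ locally uniformly, $\phi_\varepsilon\to\phi$ locally uniformly, and a locally uniform limit of concave functions is concave. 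This gives concavity of $\phi$ on every compact subinterval of $(0,1)$, hence on all of $(0,1)$. Everything outside this approximation step is the elementary chain-rule identity together with the two monotonicity bounds, so the proof should be short once the regularity is addressed.
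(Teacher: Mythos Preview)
Your proof is correct and essentially the same as the paper's: both compute the second derivative of $x(-\log r)$ and reduce concavity to the bounds coming from convexity of $-\log x$ together with the monotonicity forced by the lower bound on $\log x(t)+t$; you work in the variable $y=-\log x$ while the paper works directly with $x$, but the inequalities $y''\ge0$, $y'\le1$ correspond exactly to the paper's $x''x-(x')^2\le0$, $x'+x\ge0$, and your bracket $(y')^2-y'-y''\le0$ is precisely the paper's $x''+x'\le0$. Your treatment of the regularity issue by mollification is also what the paper has in mind when it says ``otherwise we use the approximation method.''
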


\begin{proof}
    We may assume $x(t)\in C^2(0,+\infty)$, otherwise we use the approximation method. Then $x'(t)<0$. By that $-\log x(t)$ is convex, direct computation gives
    \begin{equation}\label{x''x-x'2}
        x''(t)x(t)-(x'(t))^2\le 0.
    \end{equation}
    Since $-\log x(t)$ is convex and $\log x(t)+t$ is lower bounded, we have that $\log x(t)+t$ is increasing on $\mathbb{R}^+$, which implies that
    \begin{equation}\label{x'+x}
        x'(t)+x(t)\ge 0.
    \end{equation}
    Combining (\ref{x''x-x'2}) with (\ref{x'+x}), we get $x''(t)+x'(t)\le 0$, yielding that $x(-\log r)$ is concave with respect to $r\in (0,1)$.
\end{proof}

\begin{lemma}
	\label{l:notconvex}
	Let $g(r)$ be a concave function on $[0,1]$, which is strictly increasing on $[0,1]$ and satisfies $g(0)=0$. If there exists $r_0\in(0,1)$ such that  $g(r)=ar+b$ on $[r_0,1]$ and $g(r)\not\equiv ar$ on $[0,1]$, where $a,b\in\mathbb{R}$, then $h(t):=-\log g(e^{-t})$ is not a convex function on $[0,+\infty)$. 
\end{lemma}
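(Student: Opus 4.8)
The plan is to substitute $r=e^{-t}$, so that $t\in[0,+\infty)$ corresponds to $r\in(0,1]$ and the linear piece $[r_0,1]$ corresponds to $t\in[0,t_0]$ with $t_0:=-\log r_0>0$; on this interval $h(t)=-\log g(e^{-t})$ with $g(e^{-t})=ae^{-t}+b$. Two preliminary facts drive the argument. First, since $g$ is strictly increasing on $[r_0,1]$ where it equals $ar+b$, we have $a>0$ (and $g(r)>0$ for $r\in(0,1]$, so $h$ is well defined). Second, I would use that for a concave $g$ with $g(0)=0$ the secant slopes through the origin decrease: comparing the slope $a+b/r_0$ of the chord over $[0,r_0]$ with the slope $a$ of the chord over $[r_0,1]$ gives $a+b/r_0\ge a$, hence $b\ge 0$; the same monotonicity of $g(r)/r$ yields $g(r)\ge ar$ on all of $[0,1]$. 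These reduce the problem to the two cases $b>0$ and $b=0$.

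In the case $b>0$, on $[0,t_0]$ one has $h(t)=-\log(ae^{-t}+b)$, and a direct differentiation gives
\[ h''(t)=\frac{-abe^{-t}}{(ae^{-t}+b)^2}<0. \]
Thus $h$ is strictly concave on the nondegenerate interval $[0,t_0]$, so it cannot be convex on $[0,+\infty)$, and we are done. Here the hypothesis $g\not\equiv ar$ is automatic, since $ar+b\ne ar$.

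The case $b=0$ is the heart of the matter and the main obstacle, because then $h(t)=t-\log a$ is \emph{affine} of slope $1$ on $[0,t_0]$, so non-convexity can only be detected beyond $t_0$ and the hypothesis $g\not\equiv ar$ becomes essential. Since $g=ar$ on $[r_0,1]$ while $g\not\equiv ar$ on $[0,1]$, there is $r_1\in(0,r_0)$ with $g(r_1)\ne ar_1$; combined with $g\ge ar$ this forces $g(r_1)>ar_1$. Setting $t_1:=-\log r_1>t_0$, this reads $h(t_1)=-\log g(r_1)<t_1-\log a$, i.e. the graph of $h$ dips strictly below the line $m(t)=t-\log a$ at $t_1$. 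As $h(0)=-\log a=m(0)$ and $h(t_0)=t_0-\log a=m(t_0)$, the interior point $t_0\in(0,t_1)$ would, if $h$ were convex, satisfy $h(t_0)\le\frac{t_1-t_0}{t_1}h(0)+\frac{t_0}{t_1}h(t_1)$; substituting the values above, the right-hand side is strictly less than $t_0-\log a=h(t_0)$, a contradiction. Hence $h$ is not convex on $[0,+\infty)$.
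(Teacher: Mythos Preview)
Your proof is correct, and for the case $b>0$ it coincides with the paper's argument (compute $h''(t)=-abe^{-t}/(ae^{-t}+b)^2<0$ on $[0,t_0]$). The difference is that the paper simply asserts $a>0$ and $b>0$ from the hypotheses and stops there, whereas you only establish $b\ge 0$ and then treat $b=0$ as a separate, substantive case.

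In fact the case $b=0$ is vacuous, and this is worth seeing. Suppose $g(r)=ar$ on $[r_0,1]$ with $g(0)=0$ and $g$ concave. For any $r\in(0,r_0)$, write $r_0$ as a convex combination of $r$ and $1$; concavity gives $g(r_0)\ge(1-\lambda)g(r)+\lambda g(1)$ with $\lambda=(r_0-r)/(1-r)$, and a short computation yields $g(r)\le ar$. Combined with your inequality $g(r)\ge ar$ (from concavity between $0$ and $r_0$), this forces $g\equiv ar$ on $[0,1]$, contradicting the hypothesis. So $b>0$ is automatic, and your secant argument for the $b=0$ case, while valid, is never actually invoked. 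The paper's shorter route---observe $b>0$, compute $h''<0$---is all that is needed; your extra case buys nothing but also costs nothing in correctness.
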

\begin{proof}
	It is clear that $h(t)=-\log(ae^{-t}+b)$ on $[0,-\log r_0]$. Then we have 
	\begin{equation}
		\label{eq:230101}h''=\frac{-abe^{t}}{(a+be^t)^2}
	\end{equation}
	on $[0,-\log r_0]$.
	As $g(r)$ is strictly increasing and concave on $[0,1]$ and $g(r)\not\equiv ar$ on $[0,1]$, then $a>0$ and $b>0$. Inequality \eqref{eq:230101} shows that $h''<0$ on $[0,-\log r_0]$. Thus, $h(t)$ is not a convex function on $[0,+\infty)$.
\end{proof}

\section{Proofs of Theorem \ref{partiallylinear}, Remark \ref{tildec} and Theorem \ref{s-con}}

In this section, we prove Theorem \ref{partiallylinear}, Remark \ref{tildec} and Theorem \ref{s-con}.

\begin{proof}[Proof of Theorem \ref{partiallylinear}]
	 Firstly, we prove that there exists a unique holomorphic $(n,0)$ form $F$ on $\{\psi<-T_1\}$ satisfying $(F-f)\in H^0(Z_0,(\mathcal{O}(K_M)\otimes\mathcal{F})|_{Z_0})$, and $G(t;c)=\int_{\{\psi<-t\}}|F|^2e^{-\varphi}c(-\psi)$ for any $t\in [T_1,T_2]$. According to the assumptions and Lemma \ref{G(t)=0}, we can assume that $G(T_1)\in(0,+\infty)$.
	 
	 For any $t\in [T_1,T_2]$, by Lemma \ref{F_t}, there exists a unique holomorphic $(n,0)$ form $F_t$ on $\{\psi<-t\}$ such that $(F_t-f)\in H^0(Z_0,(\mathcal{O}(K_M)\otimes\mathcal{F})|_{Z_0})$ and
	 \begin{equation}\nonumber
	 	\int_{\{\psi<-t\}}|F_t|^2e^{-\varphi}c(-\psi)=G(t).
	 \end{equation}
 	 And by Lemma \ref{dbarequa}, for any $t\in [T_1,T_2)$ and $B_j>0$, there exists a holomorphic $(n,0)$ form $\tilde{F}_j$ on $\{\psi<-T_1\}$ such that $(\tilde{F}_j-F_t)\in H^0(Z_0,(\mathcal{O}(K_M)\otimes\mathcal{I}(\varphi+\psi))|_{Z_0})\subset H^0(Z_0,(\mathcal{O}(K_M)\otimes\mathcal{F})|_{Z_0})$ and
	 \begin{flalign}\label{p-pl-1}
	 	\begin{split}	 			
	 		&\int_{\{\psi<-T_1\}}|\tilde{F}_j-(1-b_{t,B_j}(\psi))F_t|^2e^{-\varphi-\psi+v_{t,B_j}(\psi)}c(-v_{t,B_j}(\psi))\\
	 	\leq&\int_{T_1}^{t+B_j}c(s)e^{-s}\mathrm{d}s\int_{\{\psi<-T_1\}}\frac{1}{B_j}\mathbb{I}_{\{-t-B_j<\psi<-t\}}|F_t|^2e^{-\varphi-\psi}.
	 	\end{split}
	 \end{flalign}
	We denote $v_{t,B_j}$ by $v_j$. As $s\leq v_j(s)$ and $c(s)e^{-s}$ is decreasing, we have
	\begin{equation}\nonumber
		e^{-\psi+v_j(\psi)}c(-v_j(\psi))\geq c(-\psi).
	\end{equation}
	Combining with (\ref{p-pl-1}), we obtain
	\begin{flalign}\label{p-pl-2}
		\begin{split}
			&\int_{\{\psi<-T_1\}}|\tilde{F}_j-(1-b_{t,B_j}(\psi))F_t|^2e^{-\varphi}c(-\psi)\\
			\leq&\int_{\{\psi<-T_1\}}|\tilde{F}_j-(1-b_{t,B_j}(\psi))F_t|^2e^{-\varphi-\psi+v_j(\psi)}c(-v_j(\psi))\\
			\leq&\int_{T_1}^{t+B_j}c(s)e^{-s}\mathrm{d}s\int_{\{\psi<-T_1\}}\frac{1}{B_j}\mathbb{I}_{\{-t-B_j<\psi<-t\}}|F_t|^2e^{-\varphi-\psi}\\
			\leq&\frac{e^{t+B_j}\int_{T_1}^{t+B_j}c(s)e^{-s}\mathrm{d}s}{\inf_{s\in(t,t+B_j)}c(s)}\int_{\{\psi<-T_1\}}\frac{1}{B_j}\mathbb{I}_{\{-t-B_j<\psi<-t\}}|F_t|^2e^{-\varphi}c(-\psi)\\
			\leq&\frac{e^{t+B_j}\int_{T_1}^{t+B_j}c(s)e^{-s}\mathrm{d}s}{\inf_{s\in(t,t+B_j)}c(s)}\left(\int_{\{\psi<-t\}}\frac{1}{B_j}|F_t|^2e^{-\varphi}c(-\psi)-\int_{\{\psi<-t-B_j\}}\frac{1}{B_j}|F_t|^2e^{-\varphi}c(-\psi)\right)\\
			\leq&\frac{e^{t+B_j}\int_{T_1}^{t+B_j}c(s)e^{-s}\mathrm{d}s}{\inf_{s\in(t,t+B_j)}c(s)}\cdot \frac{G(t)-G(t+B_j)}{B_j}.
		\end{split}
	\end{flalign}
	We may assume $B_j\in (0,T_2-t)$ and $\lim\limits_{j\rightarrow +\infty}B_j=0$. Then by the partially linear assumption of $G(h^{-1}(r))$, (\ref{p-pl-2}) implies that there is a nonnegative constant $\kappa$ such that
	\begin{flalign}\label{p-pl-3}
		\begin{split}
		&\int_{\{\psi<-T_1\}}|\tilde{F}_j-(1-b_{t,B_j}(\psi))F_t|^2e^{-\varphi}c(-\psi)\\
		\leq&\frac{e^{t+B_j}\int_{T_1}^{t+B_j}c(s)e^{-s}\mathrm{d}s}{\inf_{s\in(t,t+B_j)}c(s)}\cdot \frac{\kappa\int_t^{t+B_j}c(s)e^{-s}\mathrm{d}s}{B_j}.
		\end{split}
	\end{flalign}
	Here $\kappa$ is the slope of the linear function $G(h^{-1}(r))|_{[h(T_2),h(T_1)]}$.
	
	We prove that $\int_{\{\psi<-T_1\}}|\tilde{F}_j|^2e^{-\varphi}c(-\psi)$ is uniformly bounded with respect to $j$. Note that
	\begin{flalign}\nonumber
		&\left(\int_{\{\psi<-T_1\}}|\tilde{F}_j-(1-b_{t,B_j}(\psi))F_t|^2e^{-\varphi}c(-\psi)\right)^{1/2}\\
		\geq&\nonumber \left(\int_{\{\psi<-T_1\}}|\tilde{F}_j|^2e^{-\varphi}c(-\psi)\right)^{1/2}-\left(\int_{\{\psi<-T_1\}}|(1-b_{t,B_j}(\psi))F_t|^2e^{-\varphi}c(-\psi)\right)^{1/2},
	\end{flalign}
	then it follows from (\ref{p-pl-3}) that
	\begin{flalign}\nonumber
		&\left(\int_{\{\psi<-T_1\}}|\tilde{F}_j|^2e^{-\varphi}c(-\psi)\right)^{1/2}\\
		\leq&\left(\frac{e^{t+B_j}\int_{T_1}^{t+B_j}c(s)e^{-s}\mathrm{d}s}{\inf_{s\in(t,t+B_j)}c(s)}\cdot \frac{\kappa\int_t^{t+B_j}c(s)e^{-s}\mathrm{d}s}{B_j}\right)^{1/2}\nonumber \\
		&\nonumber+\left(\int_{\{\psi<-T_1\}}|(1-b_{t,B_j}(\psi))F_t|^2e^{-\varphi}c(-\psi)\right)^{1/2}.
	\end{flalign}
	Since $0\leq b_{t,B_j}(\psi)\leq 1$, $\int_{\{\psi<-T_1\}}|\tilde{F}_j|^2e^{-\varphi}c(-\psi)$ is uniformly bounded for any $j$. Then it follows from Lemma \ref{s_K} that there is a subsequence of $\{\tilde{F}_j\}$, denoted by $\{\tilde{F}_{j_k}\}$, which is uniformly convergent to a holomorphic $(n,0)$ form $\tilde{F}$ on $\{\psi<-T_1\}$ on any compact subset of $\{\psi<-T_1\}$. Then by Fatou's Lemma,
	\begin{equation}\nonumber
		\int_{\{\psi<-T_1\}}|\tilde{F}|^2e^{-\varphi}c(-\psi)\leq\liminf_{k\rightarrow +\infty}\int_{\{\psi<-T_1\}}|\tilde{F}_{j_k}|^2e^{-\varphi}c(-\psi)<+\infty.
	\end{equation} 
	As $(\tilde{F}_j-F_t)\in H^0(Z_0,(\mathcal{O}(K_M)\otimes\mathcal{F})|_{Z_0})$ for any $j$, Lemma \ref{module} implies that $(\tilde{F}-F_t)\in H^0(Z_0,(\mathcal{O}(K_M)\otimes\mathcal{F})|_{Z_0})$. With direct calculations, we have
	\begin{equation}\nonumber
		\lim_{j\rightarrow +\infty}b_{t,B_j}(s)=\left\{
		\begin{array}{ll}
			0, & s \in (-\infty,-t) \\
			1, & s \in [-t,+\infty)
		\end{array}
		\right. ,
	\end{equation}
	and
		\begin{equation}\nonumber
		\lim_{j\rightarrow +\infty}v_{t,B_j}(s)=\left\{
		\begin{array}{ll}
			-t, & s \in (-\infty,-t) \\
			s, & s \in [-t,+\infty)
		\end{array}
		\right. .
	\end{equation}
	Then it follows from (\ref{p-pl-3}) and Fatou's Lemma that
	\begin{flalign}\nonumber
		\begin{split}
			&\int_{\{\psi<-t\}}|\tilde{F}-F_t|^2e^{-\varphi-\psi-t}c(t)+\int_{\{-t\leq\psi<-T_1\}}|\tilde{F}|^2e^{-\varphi}c(-\psi)\\
			=&\int_{\{\psi<-T_1\}}\lim_{k\rightarrow +\infty}|\tilde{F}_{j_k}-(1-b_{t,B_{j_k}}(\psi))F_t|^2e^{-\varphi-\psi+v_{t,B_{j_k}}(\psi)}c(-v_{t,B_{j_k}}(\psi))\\
			\leq&\liminf_{k\rightarrow +\infty}\int_{\{\psi<-T_1\}}|\tilde{F}_{j_k}-(1-b_{t,B_{j_k}}(\psi))F_t|^2e^{-\varphi-\psi+v_{t,B_{j_k}}(\psi)}c(-v_{t,B_{j_k}}(\psi))\\
			\leq&\liminf_{k\rightarrow +\infty}\left(\frac{e^{t+B_{j_k}}\int_{T_1}^{t+B_{j_k}}c(s)e^{-s}\mathrm{d}s}{\inf_{s\in(t,t+B_{j_k})}c(s)}\cdot \frac{\kappa\int_t^{t+B_{j_k}}c(s)e^{-s}\mathrm{d}s}{B_{j_k}}\right)\\
			=&\kappa\int_{T_1}^tc(s)e^{-s}\mathrm{d}s.
		\end{split}
	\end{flalign}
	It follows from the choice of $F_t$, and Lemma \ref{F_t} that
	\begin{flalign}\label{p-pl-5}
		\begin{split}
			&\int_{\{\psi<-T_1\}}|\tilde{F}|^2e^{-\varphi}c(-\psi)\\
			=&\int_{\{\psi<-T_1\}}|\tilde{F}|^2e^{-\varphi}c(-\psi)+\int_{\{-t\leq\psi<-T_1\}}|\tilde{F}|^2e^{-\varphi}c(-\psi)\\
			=&\int_{\{\psi<-t\}}|F_t|^2e^{-\varphi}c(-\psi)+\int_{\{\psi<-t\}}|\tilde{F}-F_t|^2e^{-\varphi}c(-\psi)\\
			&+\int_{\{-t\leq\psi<-T_1\}}|\tilde{F}|^2e^{-\varphi}c(-\psi).
		\end{split}
	\end{flalign}
	As $c(t)e^{-t}$ is decreasing, we have $e^{\psi}c(-\psi)\leq e^{-t}c(t)$ on $\{\psi<-t\}$ and
	\begin{flalign}\label{p-pl-6}
		\begin{split}
			\int_{\{\psi<-t\}}|\tilde{F}-F_t|^2e^{-\varphi}c(-\psi)\leq\int_{\{\psi<-t\}}|\tilde{F}-F_t|^2e^{-\varphi-\psi-t}c(t).
		\end{split}
	\end{flalign}
	Combining (\ref{p-pl-6}) with (\ref{p-pl-5}), we get
		\begin{flalign}\label{p-pl-7}
		\begin{split}
			&\int_{\{\psi<-T_1\}}|\tilde{F}|^2e^{-\varphi}c(-\psi)\\
			=&\int_{\{\psi<-t\}}|F_t|^2e^{-\varphi}c(-\psi)+\int_{\{\psi<-t\}}|\tilde{F}-F_t|^2e^{-\varphi}c(-\psi)\\
			&+\int_{\{-t\leq\psi<-T_1\}}|\tilde{F}|^2e^{-\varphi}c(-\psi)\\
			\leq&\int_{\{\psi<-t\}}|F_t|^2e^{-\varphi}c(-\psi)+\int_{\{\psi<-t\}}|\tilde{F}-F_t|^2e^{-\varphi-\psi-t}c(-t)\\
			&+\int_{\{-t\leq\psi<-T_1\}}|\tilde{F}|^2e^{-\varphi}c(-\psi)\\
			\leq& G(t)+\kappa\int_{T_1}^tc(s)e^{-s}\mathrm{d}s\\
			=&G(T_1).
		\end{split}
	\end{flalign}
	Since $(\tilde{F}-F_t)\in H^0(Z_0,(\mathcal{O}(K_M)\otimes\mathcal{F})|_{Z_0})$, by Lemma \ref{F_t}, $\tilde{F}$ is exactly the unique holomorphic $(n,0)$ form such that $(\tilde{F}-F_t)\in H^0(Z_0,(\mathcal{O}(K_M)\otimes\mathcal{F})|_{Z_0})$ and
	\begin{equation}\nonumber
		\int_{\{\psi<-T_1\}}|\tilde{F}|^2e^{-\varphi}c(-\psi)=G(T_1).
	\end{equation}
	Or we can say $\tilde{F}=F_{T_1}$.
	
	Now we prove that $\tilde{F}=F_t$ on $\{\psi<-t\}$. By the above discussions, the inequality (\ref{p-pl-7}) should be equality. Then the inequality (\ref{p-pl-6}) should also be equality. It means that
		\begin{equation}\label{p-pl-8}
			\int_{\{\psi<-t\}}|\tilde{F}-F_t|^2e^{-\varphi-\psi}(e^{-t}c(t)-e^{\psi}c(-\psi))=0.
	\end{equation}
	Note that $\int_{T_1}^{+\infty}c(t)e^{-t}\mathrm{d}t<+\infty$. Thus we can find some $t'>t$ and $\varepsilon>0$ such that
	\begin{equation}\nonumber
		e^{-t}c(t)-e^{\psi}c(-\psi)>\varepsilon>0, \ \forall z\in \{\psi<-t'\}.
	\end{equation}
	Then (\ref{p-pl-8}) implies that
	\begin{equation}\nonumber
	\varepsilon\int_{\{\psi<-t'\}}|\tilde{F}-F_t|^2e^{-\varphi-\psi}=0.
\end{equation}
	Since $\tilde{F}$ and $F_t$ are holomorphic $(n,0)$ forms on $\{\psi<-t\}$, we have $\tilde{F}=F_t$, i.e. $F_{T_1}=F_t$ on $\{\psi<-t\}$. This is true for any $t\in [T_1, T_2]$.
	
	Finally, according to the Lebesgue dominated convergence theorem and the assumptions, we have
	\begin{equation}\nonumber
		\int_{\{\psi<T_2\}}|F_{T_1}|^2e^{-\varphi}c(-\psi)=\lim_{t\rightarrow T_2-0}G(t)=G(T_2).
	\end{equation}
	From this we get $F_{T_1}=F_{T_2}$ on $\{\psi<-T_2\}$. Then we have proved that $F_{T_1}$ is exactly the unique holomorphic $(n,0)$ form $F$ on $\{\psi<-T_1\}$ satisfying $(F-f)\in H^0(Z_0,(\mathcal{O}(K_M)\otimes\mathcal{F})|_{Z_0})$, and $G(t;c)=\int_{\{\psi<-t\}}|F|^2e^{-\varphi}c(-\psi)$ for any $t\in [T_1,T_2]$.	
	
	Now we prove the rest result of Theorem \ref{partiallylinear}.
	
	As $a(t)$ is a nonnegative measurable function on $[T_1,T_2]$, we can find a sequence of functions $\{\sum_{j=1}^{n_i}a_{ij}\mathbb{I}_{E_ij}\}_{i\in\mathbb{N}_+}$ ($n_i<+\infty$ for any $i\in\mathbb{N}_+$) satisfying that the sequence is increasingly convergent to $a(t)$ for a.e. $t\in [T_1,T_2]$, where $E_{ij}$ is a Lebesgue measurable subset of $[T_1,T_2]$ and $a_{ij}>0$ is a constant for any $i,j$. It follows from Levi's Theorem that we only need to prove the equality (\ref{a(t)}) under the assumption $a(t)=\mathbb{I}_E(t)$, where $E$ is a Lebesgue measurable subset of $[T_1,T_2]$.
	
	Note that
	\begin{equation}\nonumber
		G(t)=\int_{\{\psi<-t\}}|F|^2e^{-\varphi}c(-\psi)=G(T_2)+\frac{G(T_1)-G(T_2)}{\int_{T_1}^{T_2}c(s)e^{-s}\mathrm{d}s}\int_t^{T_2}c(s)e^{-s}\mathrm{d}s
	\end{equation}
	for any $t\in [T_1,T_2]$. Then
	\begin{equation}\label{t1t2}
		\int_{\{-t_2\leq\psi<-t_1\}}|F|^2e^{-\varphi}c(-\psi)=\frac{G(T_1)-G(T_2)}{\int_{T_1}^{T_2}c(s)e^{-s}\mathrm{d}s}\int_{t_1}^{t_2}c(s)e^{-s}\mathrm{d}s
	\end{equation}
	holds for any $T_1\leq t_1<t_2\leq T_2$. Then for any Lebesgue zero measure subset $N$ of $[T_1,T_2]$, we get
	\begin{equation}\label{zerom}
		\int_{\{-\psi(z)\in N\}}|F|^2e^{-\varphi}=0
	\end{equation}
	from Lebesgue dominated convergence theorem. Since $c(t)e^{-t}$ is decreasing on $[T_1,T_2]$, we can find a countable subsets $\{s_j\}_{j\in\mathbb{N}_+}\subset[T_1,T_2]$ such that $c(t)$ is continuous besides $\{s_j\}$. It means that there exists a sequence of open sets $\{U_k\}$ such that $\{s_j\}\subset U_k\subset [T_1,T_2]$ and $\lim\limits_{k\rightarrow +\infty}\mu(U_k)=0$, where $\mu$ is the Lebesgue measure on $\mathbb{R}$. Then for any $[t_1',t_2']\subset [T_1,T_2]$, we have
	\begin{flalign}\nonumber
		\begin{split}
			&\int_{\{-t_2'\leq\psi<-t_1'\}}|F|^2e^{-\varphi}\\
			=&\int_{\{-\psi(z)\in(t_1',t_2']\setminus U_k\}}|F|^2e^{-\varphi}+\int_{\{-\psi(z)\in(t_1',t_2']\cup U_k\}}|F|^2e^{-\varphi}\\
			=&\lim_{n\rightarrow+\infty}\sum_{i=1}^{n-1}\int_{\{-\psi(z)\in I_{i,n}\setminus U_k\}}|F|^2e^{-\varphi}+\int_{\{-\psi(z)\in(t_1',t_2']\cup U_k\}}|F|^2e^{-\varphi},
		\end{split}
	\end{flalign}
	where $I_{i,n}=(t_1'+ia_n,t_1'+(i+1)a_n]$, and $a_n=(t_2'-t_1')/n$. Using equality (\ref{t1t2}), we have
	\begin{flalign}\nonumber
		\begin{split}
			&\lim_{n\rightarrow+\infty}\sum_{i=1}^{n-1}\int_{\{-\psi(z)\in I_{i,n}\setminus U_k\}}|F|^2e^{-\varphi}\\
			\leq&\limsup_{n\rightarrow+\infty}\sum_{i=1}^{n-1}\frac{1}{\inf_{I_{i,n}\setminus U_k}c(t)}\int_{\{-\psi(z)\in I_{i,n}\setminus U_k\}}|F|^2e^{-\varphi}c(-\psi)\\
			\leq&\frac{G(T_1)-G(T_2)}{\int_{T_1}^{T_2}c(s)e^{-s}\mathrm{d}s}\limsup_{n\rightarrow+\infty}\sum_{i=1}^{n-1}\frac{1}{\inf_{I_{i,n}\setminus U_k}c(t)}\int_{I_{i,n}\setminus U_k}c(s)e^{-s}\mathrm{d}s.
		\end{split}
	\end{flalign}
	In addition, according to the choice of $U_k$, we have
	\begin{flalign}\nonumber
		\begin{split}
			&\limsup_{n\rightarrow+\infty}\sum_{i=1}^{n-1}\frac{1}{\inf_{I_{i,n}\setminus U_k}c(t)}\int_{I_{i,n}\setminus U_k}c(s)e^{-s}\mathrm{d}s\\
			\leq&\limsup_{n\rightarrow+\infty}\sum_{i=1}^{n-1}\frac{\sup_{I_{i,n}\setminus U_k}c(t)}{\inf_{I_{i,n}\setminus U_k}c(t)}\int_{I_{i,n}\setminus U_k}c(s)e^{-s}\mathrm{d}s\\
			=&\int_{(t_1',t_2]\setminus U_k}e^{-s}\mathrm{d}s.
		\end{split}
	\end{flalign}
	Now we can obtain that
	\begin{flalign}\nonumber
		\begin{split}
			&\int_{\{-t_2'\leq\psi<-t_1'\}}|F|^2e^{-\varphi}\\
			=&\int_{\{-\psi(z)\in(t_1',t_2']\setminus U_k\}}|F|^2e^{-\varphi}+\int_{\{-\psi(z)\in(t_1',t_2']\cup U_k\}}|F|^2e^{-\varphi}\\
			\leq&\frac{G(T_1)-G(T_2)}{\int_{T_1}^{T_2}c(s)e^{-s}\mathrm{d}s}\int_{(t_1',t_2]\setminus U_k}e^{-s}\mathrm{d}s+\int_{\{-\psi(z)\in(t_1',t_2']\cup U_k\}}|F|^2e^{-\varphi}.
		\end{split}
	\end{flalign}
	Let $k\rightarrow+\infty$, it follows from equality (\ref{zerom}) that
	\begin{equation}\nonumber
		\int_{\{-t_2'\leq\psi<-t_1'\}}|F|^2e^{-\varphi}\leq\frac{G(T_1)-G(T_2)}{\int_{T_1}^{T_2}c(s)e^{-s}\mathrm{d}s}\int_{t_1'}^{t_2'}e^{-s}\mathrm{d}s.
	\end{equation}
	Using the same methods, we can also get that
	\begin{equation}\nonumber
		\int_{\{-t_2'\leq\psi<-t_1'\}}|F|^2e^{-\varphi}\geq\frac{G(T_1)-G(T_2)}{\int_{T_1}^{T_2}c(s)e^{-s}\mathrm{d}s}\int_{t_1'}^{t_2'}e^{-s}\mathrm{d}s.
	\end{equation}
	Then we know that
	\begin{equation}\nonumber
		\int_{\{-t_2'\leq\psi<-t_1'\}}|F|^2e^{-\varphi}=\frac{G(T_1)-G(T_2)}{\int_{T_1}^{T_2}c(s)e^{-s}\mathrm{d}s}\int_{t_1'}^{t_2'}e^{-s}\mathrm{d}s.
	\end{equation}
	Thus for any open subset $U\subset [T_1,T_2]$ and any compact subset $K\subset [T_1,T_2]$, we have
	\begin{equation}\nonumber
		\int_{\{-\psi(z)\in U\}}|F|^2e^{-\varphi}=\frac{G(T_1)-G(T_2)}{\int_{T_1}^{T_2}c(s)e^{-s}\mathrm{d}s}\int_Ue^{-s}\mathrm{d}s,
	\end{equation}
	and
	\begin{equation}\nonumber
		\int_{\{-\psi(z)\in K\}}|F|^2e^{-\varphi}=\frac{G(T_1)-G(T_2)}{\int_{T_1}^{T_2}c(s)e^{-s}\mathrm{d}s}\int_Ke^{-s}\mathrm{d}s.
	\end{equation}
	 Then for the Lebesgue measurable subset $E\subset [T_1,T_2]$, and $t_1,t_2\in [T_1,T_2]$ with $t_1<t_2$, we can find a sequence of compact sets $\{K_j\}$ and a sequence of open sets $\{U_j\}$ such that 
	 \begin{equation}\nonumber
	 	K_1\subset\ldots\subset K_j\subset K_{j+1}\subset\ldots E\cap (t_1,t_2]\subset\ldots\subset U_{j+1}\subset U_j\subset\ldots\subset U_1\subset [T_1,T_2],
	 \end{equation}
	 and $\lim\limits_{j\rightarrow +\infty}\mu(U_j\setminus K_j)=0$. Then we have
	 \begin{flalign}\nonumber
	 	\begin{split}
	 	 &\int_{\{-t_2\leq\psi<-t_1\}}|F|^2e^{-\varphi}\mathbb{I}_E(-\psi)\\
	 	 =&\int_{\{-\psi\in E\cap (t_1,t_2]\}}|F|^2e^{-\varphi}\\
	 	 \leq&\liminf_{j\rightarrow +\infty}\int_{\{-\psi\in U_j\cap (t_1,t_2]\}}|F|^2e^{-\varphi}\\
	 	 \leq&\liminf_{j\rightarrow +\infty}\frac{G(T_1)-G(T_2)}{\int_{T_1}^{T_2}c(s)e^{-s}\mathrm{d}s}\int_{\{-\psi\in U_j\cap (t_1,t_2]\}}e^{-s}\mathrm{d}s\\
	 	 \leq&\frac{G(T_1)-G(T_2)}{\int_{T_1}^{T_2}c(s)e^{-s}\mathrm{d}s}\int_{\{-\psi\in E\cap (t_1,t_2]\}}e^{-s}\mathrm{d}s\\
	 	 =&\frac{G(T_1)-G(T_2)}{\int_{T_1}^{T_2}c(s)e^{-s}\mathrm{d}s}\int_{t_1}^{t_2}e^{-s}\mathbb{I}_E\mathrm{d}s,
	 	 \end{split}
	 \end{flalign}
	 and
	 \begin{flalign}\nonumber
	 	\begin{split}
	 		&\int_{\{-t_2\leq\psi<-t_1\}}|F|^2e^{-\varphi}\mathbb{I}_E(-\psi)\\
	 		=&\int_{\{-\psi\in E\cap (t_1,t_2]\}}|F|^2e^{-\varphi}\\
	 		\geq&\liminf_{j\rightarrow +\infty}\int_{\{-\psi\in K_j\cap (t_1,t_2]\}}|F|^2e^{-\varphi}\\
	 		\geq&\liminf_{j\rightarrow +\infty}\frac{G(T_1)-G(T_2)}{\int_{T_1}^{T_2}c(s)e^{-s}\mathrm{d}s}\int_{\{-\psi\in K_j\cap (t_1,t_2]\}}e^{-s}\mathrm{d}s\\
	 		\geq&\frac{G(T_1)-G(T_2)}{\int_{T_1}^{T_2}c(s)e^{-s}\mathrm{d}s}\int_{\{-\psi\in E\cap (t_1,t_2]\}}e^{-s}\mathrm{d}s\\
	 		=&\frac{G(T_1)-G(T_2)}{\int_{T_1}^{T_2}c(s)e^{-s}\mathrm{d}s}\int_{t_1}^{t_2}e^{-s}\mathbb{I}_E\mathrm{d}s.
	 	\end{split}
	 \end{flalign}
	 It means that
	 \begin{equation}\nonumber
	 	\int_{\{-t_2\leq\psi<-t_1\}}|F|^2e^{-\varphi}\mathbb{I}_E(-\psi)=\frac{G(T_1)-G(T_2)}{\int_{T_1}^{T_2}c(s)e^{-s}\mathrm{d}s}\int_{t_1}^{t_2}e^{-s}\mathbb{I}_E\mathrm{d}s,
	 \end{equation}
 	which implies that equality (\ref{a(t)}) holds.	
\end{proof}

Now we give the proof of Remark \ref{tildec}.
\begin{proof}[Proof of Remark \ref{tildec}]
	Firstly, we have $G(t;\tilde{c})\leq\int_{\{\psi<-t\}}|F|^2e^{-\varphi}\tilde{c}(-\psi)$, then we may assume that $G(t;\tilde{c})<+\infty$.

	Secondly, by Lemma \ref{F_t}, for any $t\in [T_1,T_2]$, there exists a unique holomorphic $(n,0)$ form $F_t$ on $\{\psi<-t\}$ satisfying $(F_t-f)\in H^0(Z_0,(\mathcal{O}(K_M)\otimes\mathcal{F})|_{Z_0})$ , and
	\begin{equation}\nonumber
		\int_{\{\psi<-t\}}|F_t|^2e^{-\varphi}\tilde{c}(-\psi)=G(t;\tilde{c}).
	\end{equation}
	According to Lemma \ref{F_t} and $F_t\in\mathcal{H}(\tilde{c},t)\subset\mathcal{H}(c,t)$, we have
	\begin{flalign}\nonumber
		\begin{split}
			&\int_{\{\psi<-t'\}}|F_t|^2e^{-\varphi}c(-\psi)\\
			=&\int_{\{\psi<-t'\}}|F|^2e^{-\varphi}c(-\psi)+\int_{\{\psi<-t'\}}|F_t-F|^2e^{-\varphi}c(-\psi)
		\end{split}
	\end{flalign}
	for any $t'\in [t,T_2]$. Then for any $t_1,t_2\in [t,T_2]$ with $t_1<t_2$,
	\begin{flalign}\label{|F_t-F|}
		\begin{split}
			&\int_{\{-t_2\leq\psi<-t_1\}}|F_t|^2e^{-\varphi}c(-\psi)\\
			=&\int_{\{-t_2\leq\psi<-t_1\}}|F|^2e^{-\varphi}c(-\psi)+\int_{\{-t_2\leq\psi<-t_1\}}|F_t-F|^2e^{-\varphi}c(-\psi).
		\end{split}
		\end{flalign} 
	Thus for any Lebesgue zero measure subset $N$ of $(t,T_2]$, it follows from equality (\ref{|F_t-F|}), equality (\ref{zerom}) and Lebesgue dominated convergence theorem that
	\begin{equation}\label{zerom2}
		\int_{\{-\psi(z)\in N\}}|F_t|^2e^{-\varphi}=\int_{\{-\psi(z)\in N\}}|F_t-F|^2e^{-\varphi}.
	\end{equation}
	Since $c(t)e^{-t}$ is decreasing on $[T_1,T_2]$, we can find a countable subsets $\{s_j\}_{j\in\mathbb{N}_+}\subset(t,T_2]$ such that $c(t)$ is continuous besides $\{s_j\}$. It means that there exists a sequence of open sets $\{U_k\}$ such that $\{s_j\}\subset U_k\subset [T_1,T_2]$ and $\lim\limits_{k\rightarrow +\infty}\mu(U_k)=0$, where $\mu$ is the Lebesgue measure on $\mathbb{R}$. Then for any $(t_1',t_2']\subset (t,T_2]$, we have
	\begin{flalign}\nonumber
		\begin{split}
			&\int_{\{-t_2'\leq\psi<-t_1'\}}|F_t|^2e^{-\varphi}\\
			=&\int_{\{-\psi(z)\in(t_1',t_2']\setminus U_k\}}|F_t|^2e^{-\varphi}+\int_{\{-\psi(z)\in(t_1',t_2']\cup U_k\}}|F_t|^2e^{-\varphi}\\
			=&\lim_{n\rightarrow+\infty}\sum_{i=1}^{n-1}\int_{\{-\psi(z)\in I_{i,n}\setminus U_k\}}|F_t|^2e^{-\varphi}+\int_{\{-\psi(z)\in(t_1',t_2']\cup U_k\}}|F_t|^2e^{-\varphi},
		\end{split}
	\end{flalign}
	where $I_{i,n}=(t_1'+ia_n,t_1'+(i+1)a_n]$, and $a_n=(t_2'-t_1')/n$. Using equality (\ref{|F_t-F|}), we have
	\begin{flalign}\nonumber
		\begin{split}
			&\lim_{n\rightarrow+\infty}\sum_{i=1}^{n-1}\int_{\{-\psi(z)\in I_{i,n}\setminus U_k\}}|F_t|^2e^{-\varphi}\\
			\leq&\limsup_{n\rightarrow+\infty}\sum_{i=1}^{n-1}\frac{1}{\inf_{I_{i,n}\setminus U_k}c(t)}\int_{\{-\psi(z)\in I_{i,n}\setminus U_k\}}|F_t|^2e^{-\varphi}c(-\psi)\\
			=&\limsup_{n\rightarrow+\infty}\sum_{i=1}^{n-1}\frac{1}{\inf_{I_{i,n}\setminus U_k}c(t)}(\int_{\{-\psi(z)\in I_{i,n}\setminus U_k\}}|F|^2e^{-\varphi}c(-\psi)\\
			&+\int_{\{-\psi(z)\in I_{i,n}\setminus U_k\}}|F_t-F|^2e^{-\varphi}c(-\psi)).
		\end{split}
	\end{flalign}
	In addition, according to the choice of $U_k$, we have
	\begin{flalign}\nonumber
		\begin{split}
			&\limsup_{n\rightarrow+\infty}\sum_{i=1}^{n-1}\frac{1}{\inf_{I_{i,n}\setminus U_k}c(t)}\int_{\{-\psi(z)\in I_{i,n}\setminus U_k\}}|F_t|^2e^{-\varphi}\\
			\leq&\limsup_{n\rightarrow+\infty}\sum_{i=1}^{n-1}\frac{1}{\inf_{I_{i,n}\setminus U_k}c(t)}(\int_{\{-\psi(z)\in I_{i,n}\setminus U_k\}}|F|^2e^{-\varphi}c(-\psi)\\
			+&\int_{\{-\psi(z)\in I_{i,n}\setminus U_k\}}|F_t-F|^2e^{-\varphi}c(-\psi))\\
			\leq&\limsup_{n\rightarrow+\infty}\sum_{i=1}^{n-1}\frac{\sup_{I_{i,n}\setminus U_k}c(t)}{\inf_{I_{i,n}\setminus U_k}c(t)}(\int_{\{-\psi(z)\in I_{i,n}\setminus U_k\}}|F|^2e^{-\varphi}\\
			+&\int_{\{-\psi(z)\in I_{i,n}\setminus U_k\}}|F_t-F|^2e^{-\varphi})\\
			&=\int_{\{-\psi(z)\in (t_1',t_2']\setminus U_k\}}|F|^2e^{-\varphi}+\int_{\{-\psi(z)\in (t_1',t_2']\setminus U_k\}}|F_t-F|^2e^{-\varphi}.
		\end{split}
	\end{flalign}
	Now we can obtain that
	\begin{flalign}\nonumber
		\begin{split}
			&\int_{\{-t_2'\leq\psi<-t_1'\}}|F_t|^2e^{-\varphi}\\
			=&\int_{\{-\psi(z)\in(t_1',t_2']\setminus U_k\}}|F_t|^2e^{-\varphi}+\int_{\{-\psi(z)\in(t_1',t_2']\cup U_k\}}|F_t|^2e^{-\varphi}\\
			\leq&\int_{\{-\psi(z)\in (t_1',t_2']\setminus U_k\}}|F|^2e^{-\varphi}+\int_{\{-\psi(z)\in (t_1',t_2']\setminus U_k\}}|F_t-F|^2e^{-\varphi}\\
			&+\int_{\{-\psi(z)\in(t_1',t_2']\cup U_k\}}|F_t|^2e^{-\varphi}.
		\end{split}
	\end{flalign}
	Let $k\rightarrow+\infty$, then it follows from equality (\ref{zerom2}) that
	\begin{flalign}\nonumber
		\begin{split}
		&\int_{\{-t_2'\leq\psi<-t_1'\}}|F|^2e^{-\varphi}\\
		\leq&\int_{\{-\psi(z)\in (t_1',t_2']\}}|F|^2e^{-\varphi}+\int_{\{-\psi(z)\in (t_1',t_2']\setminus N\}}|F_t-F|^2e^{-\varphi}\\
		&+\int_{\{-\psi(z)\in(t_1',t_2']\cup N\}}|F_t|^2e^{-\varphi},
		\end{split}
	\end{flalign}
	where $N=\bigcap_{k=1}^{+\infty}U_k$ and $\mu(N)=0$. Using the same methods, we can also get that
	\begin{flalign}\nonumber
	\begin{split}
		&\int_{\{-t_2'\leq\psi<-t_1'\}}|F|^2e^{-\varphi}\\
		\geq&\int_{\{-\psi(z)\in (t_1',t_2']\}}|F|^2e^{-\varphi}+\int_{\{-\psi(z)\in (t_1',t_2']\setminus N\}}|F_t-F|^2e^{-\varphi}\\
		&+\int_{\{-\psi(z)\in(t_1',t_2']\cup N\}}|F_t|^2e^{-\varphi}.
	\end{split}
\end{flalign}
	Then we know that
	\begin{flalign}\nonumber
	\begin{split}
		&\int_{\{-t_2'\leq\psi<-t_1'\}}|F|^2e^{-\varphi}\\
		=&\int_{\{-\psi(z)\in (t_1',t_2']\}}|F|^2e^{-\varphi}+\int_{\{-\psi(z)\in (t_1',t_2']\setminus N\}}|F_t-F|^2e^{-\varphi}\\
		&+\int_{\{-\psi(z)\in(t_1',t_2']\cup N\}}|F_t|^2e^{-\varphi}.
	\end{split}
\end{flalign}
	Thus for any open subset $U\subset (t,T_2]$ and any compact subset $K\subset (t,T_2]$, we have
	\begin{flalign}\nonumber
	\begin{split}
		&\int_{\{-\psi(z)\in U\}}|F|^2e^{-\varphi}\\
		=&\int_{\{-\psi(z)\in U\}}|F|^2e^{-\varphi}+\int_{\{-\psi(z)\in U\setminus N\}}|F_t-F|^2e^{-\varphi}\\
		&+\int_{\{-\psi(z)\in U\cup N\}}|F_t|^2e^{-\varphi},
	\end{split}
\end{flalign}
	and
	\begin{flalign}\nonumber
	\begin{split}
		&\int_{\{-\psi(z)\in K\}}|F|^2e^{-\varphi}\\
		=&\int_{\{-\psi(z)\in K\}}|F|^2e^{-\varphi}+\int_{\{-\psi(z)\in K\setminus N\}}|F_t-F|^2e^{-\varphi}\\
		&+\int_{\{-\psi(z)\in K\cup N\}}|F_t|^2e^{-\varphi}.
	\end{split}
\end{flalign}
	Then for the Lebesgue measurable subset $E\subset [T_1,T_2]$, we can find a sequence of compact sets $\{K_j\}$ such that
	\begin{equation}\nonumber
		K_1\subset\ldots\subset K_j\subset K_{j+1}\subset\ldots E\subset [T_1,T_2],
	\end{equation}
	and $\lim\limits_{j\rightarrow +\infty}\mu(E\setminus K_j)=0$. Thus we have
	\begin{flalign}\nonumber
		\begin{split}
		&\int_{\{-T_2\leq\psi<-t\}}|F_t|^2e^{-\varphi}\mathbb{I}_E(-\psi)\\
		\geq&\lim_{j\rightarrow +\infty}\int_{\{-T_2\leq\psi<-t\}}|F_t|^2e^{-\varphi}\mathbb{I}_{K_j}(-\psi)\\
		\geq&\lim_{j\rightarrow +\infty}\int_{\{-T_2\leq\psi<-t\}}|F|^2e^{-\varphi}\mathbb{I}_{K_j}(-\psi)\\
		=&\int_{\{-T_2\leq\psi<-t\}}|F|^2e^{-\varphi}\mathbb{I}_E(-\psi).
	\end{split}
	\end{flalign}
	For the measurable function $\tilde{c}(s)$, we can find an increasing sequence of simple functions $\{\sum_{j=1}^{n_i}a_{ij}\mathbb{I}_{E_{ij}}\}_{i=1}^{+\infty}$ on $(t,T_2]$,  such that $\lim\limits_{i\rightarrow+\infty}\sum_{j=1}^{n_i}a_{ij}\mathbb{I}_{E_{ij}}(s)=\tilde{c}(s)$ for a.e. $s\in (t,T_2]$. Then we can get that
	\begin{equation}\label{F_tgeqF}
		\int_{\{-T_2\leq\psi<-t\}}|F_t|^2e^{-\varphi}\tilde{c}(-\psi)\geq \int_{\{-T_2\leq\psi<-t\}}|F|^2e^{-\varphi}\tilde{c}(-\psi).
	\end{equation}
	Combining inequality (\ref{F_tgeqF}) with the assumption that
	\begin{equation}\nonumber
		\int_{\{\psi<-T_2\}}|F|^2e^{-\varphi}\tilde{c}(-\psi)=G(t;\tilde{c}),
	\end{equation}
	which means that
	\begin{equation}\nonumber
		\int_{\{\psi<-T_2\}}|F_t|^2e^{-\varphi}\tilde{c}(-\psi)\geq \int_{\{\psi<-T_2\}}|F|^2e^{-\varphi}\tilde{c}(-\psi),
	\end{equation}
	we get that
		\begin{equation}\nonumber
		G(t;\tilde{c})=\int_{\{\psi<-t\}}|F_t|^2e^{-\varphi}\tilde{c}(-\psi)\geq \int_{\{\psi<-t\}}|F|^2e^{-\varphi}\tilde{c}(-\psi).
	\end{equation}
	Then it follows from $(F-f)\in H^0(Z_0,(\mathcal{O}(K_M)\otimes\mathcal{F})|_{Z_0})$ that $F_t=F|_{\{\psi<-t\}}$ and $G(t;\tilde{c})=\int_{\{\psi<-t\}}|F|^2e^{-\varphi}\tilde{c}(-\psi)$. The other results are clear after these.
\end{proof}

We give the proof of Theorem \ref{s-con}.
\begin{proof}[Proof of Theorem \ref{s-con}]
	According to Remark \ref{tildec} and the statement (4) in Theorem \ref{s-con}, we get that $G(h^{-1}(r);\tilde{\varphi},\tilde{\psi},c)$ is linear respect to $r\in [\int_{T_2}^{+\infty}c(s)e^{-s}\mathrm{d}s$, $\int_{T_1}^{+\infty}c(s)e^{-s}\mathrm{d}s]$, and
	\begin{equation}\label{s-con2}
		G(t;,\tilde{\varphi},\tilde{\psi},c)=\int_{\{\tilde{\psi}<-t\}}|\tilde{F}|^2e^{-\tilde{\varphi}}c(-\tilde{\psi}), \ \forall t\in [T_1,T_2].
	\end{equation}
	By the statement (3) in Theorem \ref{s-con}, there is $(\tilde{F}-f)\in H^0(\tilde{Z}_0,(\mathcal{O}(K_M)\otimes\mathcal{I}(\tilde{\varphi}+\tilde{\psi}))|_{\tilde{Z}_0})$ $\subset H^0(Z_0,(\mathcal{O}(K_M)\otimes\mathcal{I}(\varphi+\psi))|_{Z_0})$ $\subset H^0(Z_0,(\mathcal{O}(K_M)\otimes\mathcal{F})|_{Z_0})$. Then for any $t\in [T_1,T_2]$, and any holomorphic $(n,0)$ form $F_t$ on $\{\psi<-t\}$ satisfying $(F_t-f)\in H^0(Z_0,(\mathcal{O}(K_M)\otimes\mathcal{F})|_{Z_0})$, on the one hand, equality (\ref{s-con1}) and Lemma \ref{F_t} shows that
	\begin{equation}\label{s-con3}
		\int_{\{\psi<-T_2\}}|F_t|^2e^{-\varphi}c(-\psi)\geq G(T_2;\varphi,\psi,c)=\int_{\{\psi<-T_2\}}|\tilde{F}|^2e^{-\varphi}c(-\psi).
	\end{equation}
	On the other hand, according to the statement (3), we have $(F_t-f)\in H^0(Z_0,(\mathcal{O}(K_M)\otimes\mathcal{F}))|_{Z_0})$ $\subset H^0(\tilde{Z}_0,(\mathcal{O}(K_M)\otimes\tilde{\mathcal{F}})|_{\tilde{Z}_0})$. Then it follows from the statement (1), (2), equality (\ref{s-con2}) and Lemma \ref{F_t} that
	\begin{flalign}\label{s-con4}
		\begin{split}
			&\int_{\{-T_2\leq\psi<-t\}}|F_t|^2e^{-\varphi}c(-\psi)\\
			=&\int_{\{-T_2\leq\tilde{\psi}<-t\}}|F_t|^2e^{-\tilde{\varphi}}c(-\tilde{\psi})\\
			=&\int_{\{\tilde{\psi}<-t\}}|F_t|^2e^{-\tilde{\varphi}}c(-\tilde{\psi})-\int_{\{\tilde{\psi}<-T_2\}}|F_t|^2e^{-\tilde{\varphi}}c(-\tilde{\psi})\\
			=&\int_{\{\tilde{\psi}<-t\}}|\tilde{F}|^2e^{-\tilde{\varphi}}c(-\tilde{\psi})+\int_{\{\tilde{\psi}<-t\}}|F_t-\tilde{F}|^2e^{-\tilde{\varphi}}c(-\tilde{\psi})\\
			&-\int_{\{\tilde{\psi}<-T_2\}}|\tilde{F}|^2e^{-\tilde{\varphi}}c(-\tilde{\psi})-\int_{\{\tilde{\psi}<-T_2\}}|F_t-\tilde{F}|^2e^{-\tilde{\varphi}}c(-\tilde{\psi})\\
			=&\int_{\{-T_2\leq\tilde{\psi}<-t\}}|\tilde{F}|^2e^{-\tilde{\varphi}}c(-\tilde{\psi})+\int_{\{-T_2\leq\tilde{\psi}<-t\}}|F_t-\tilde{F}|^2e^{-\tilde{\varphi}}c(-\tilde{\psi})\\
			\geq&\int_{\{-T_2\leq\tilde{\psi}<-t\}}|\tilde{F}|^2e^{-\tilde{\varphi}}c(-\tilde{\psi})\\
			=&\int_{\{-T_2\leq\psi<-t\}}|\tilde{F}|^2e^{-\varphi}c(-\psi).
		\end{split}
	\end{flalign}
	Combining inequality (\ref{s-con3}) with equality (\ref{s-con4}), we get that
	\begin{equation}\nonumber
		\int_{\{\psi<-t\}}|F_t|^2e^{-\varphi}c(-\psi)\geq\int_{\{\psi<-t\}}|\tilde{F}|^2e^{-\varphi}c(-\psi).
	\end{equation}
	Since $(\tilde{F}-f)\in H^0(Z_0,(\mathcal{O}(K_M)\otimes\mathcal{F})|_{Z_0})$, according to the arbitrariness of $F_t$, we know that
	\begin{equation}\nonumber
		G(t;\varphi,\psi,c)=\int_{\{\psi<-t\}}|\tilde{F}|^2e^{-\varphi}c(-\psi), \ t\in [T_1,T_2].
	\end{equation}
	Now for any $t\in [T_1,T_2]$, Remark \ref{tildec} implies that
	\begin{flalign}\nonumber
		\begin{split}
			G(t;\varphi,\psi,c)&=\int_{\{\psi<-t\}}|\tilde{F}|^2e^{-\varphi}c(-\psi)\\
			&=\int_{\{\psi<-T_2\}}|\tilde{F}|^2e^{-\varphi}c(-\psi)+\int_{\{-T_2\leq\psi<-t\}}|\tilde{F}|^2e^{-\varphi}c(-\psi)\\
			&=G(T_2;\varphi,\psi,c)+\int_{\{-T_2\leq\tilde{\psi}<-t\}}|\tilde{F}|^2e^{-\tilde{\varphi}}c(-\tilde{\psi})\\
			&=G(T_2;\varphi,\psi,c)+\frac{G(T_1;\tilde{\varphi},\tilde{\psi},\tilde{c})-G(T_2;\tilde{\varphi},\tilde{\psi},\tilde{c})}{\int_{T_1}^{T_2}\tilde{c}(s)e^{-s}\mathrm{d}s}\int_t^{T_2}c(s)e^{-s}\mathrm{d}s.
		\end{split}
	\end{flalign}
	It means that $G(h^{-1}(r);\varphi,\psi,c)$ is linear with respect to $r\in [\int_{T_2}^{+\infty}c(s)e^{-s}\mathrm{d}s$, $\int_{T_1}^{+\infty}c(s)e^{-s}\mathrm{d}s]$.
\end{proof}

\section{Proofs of Theorem \ref{sp1} and Theorem \ref{sp2}}

In this section, we prove Theorem \ref{sp1} and Theorem \ref{sp2}.

\begin{proof}[Proof of Theorem \ref{sp1}]
Assume that $G(h^{-1}(r);\varphi)$ is linear with respect to $r\in [\int_{T_2}^{+\infty}c(s)e^{-s}\mathrm{d}s$, $\int_{T_1}^{+\infty}c(s)e^{-s}\mathrm{d}s]$. Then it follows from Theorem \ref{partiallylinear} that there exists a unique holomorphic $(n,0)$ form $F$ on $\{\psi<-T_1\}$ satisfying $(F-f)\in H^0(Z_0,(\mathcal{O}(K_M)\otimes\mathcal{F})|_{Z_0})$, and 
\begin{equation}\nonumber
G(t;\varphi)=\int_{\{\psi<-t\}}|F|^2e^{-\varphi}c(-\psi)
\end{equation}
for any $t\in [T_1,T_2]$. 

As $\tilde{\varphi}+\psi$ is plurisubharmonic and $\tilde{\varphi}-\varphi$ is bounded on $\{\psi<-T_1\}$, it follows from Theorem \ref{Concave} that $G(h^{-1}(r);\tilde{\varphi})$ is concave with respect to $r\in [0,\int_{T_1}^{+\infty}c(s)e^{-s}\mathrm{d}s]$. Since $\tilde{\varphi}+\psi\geq \varphi+\psi$ and $\tilde{\varphi}+\psi\not\equiv\varphi+\psi$ on the interior  of $\{-T_2\leq\psi<-T_1\}$, and both of them are plurisubharmonic on $M$, then there exists a subset $U$ of $\{-T_2\leq\psi<-T_1\}$ such that $\mu(U)>0$ and $e^{-\tilde{\varphi}}<e^{-\varphi}$ on $U$, where $\mu$ is the Lebesgue measure on $M$. Since $\tilde{\varphi}=\varphi$ on $\{\psi<-T_2\}$, we have
\begin{flalign}\nonumber
	\begin{split}
	&\frac{G(T_1;\tilde{\varphi})-G(T_2;\tilde{\varphi})}{\int_{T_1}^{T_2}c(s)e^{-s}\mathrm{d}s}\\
	=&\frac{G(T_1;\tilde{\varphi})-G(T_2;\varphi)}{\int_{T_1}^{T_2}c(s)e^{-s}\mathrm{d}s}\\
	\leq&\frac{\int_{\{\psi<-T_1\}}|F|^2e^{-\tilde{\varphi}}c(-\psi)-\int_{\{\psi<-T_2\}}|F|^2e^{-\varphi}c(-\psi)}{\int_{T_1}^{T_2}c(s)e^{-s}\mathrm{d}s}\\
	<&\frac{G(T_1;\varphi)-G(T_2;\varphi)}{\int_{T_1}^{T_2}c(s)e^{-s}\mathrm{d}s}
	\end{split}
\end{flalign}
for any $t\in [T_1,T_2)$. By Lemma \ref{F_t}, for any $t\in [T_1,T_2)$, there exists a holomorphic $(n,0)$ form $F_t$ such that $(F_t-f)\in H^0(Z_0,(\mathcal{O}(K_M)\otimes\mathcal{F})|_{Z_0})$, and 
\begin{equation}\nonumber
G(t;\tilde{\varphi})=\int_{\{\psi<-t\}}|F_t|^2e^{-\tilde{\varphi}}c(-\psi)<+\infty.
\end{equation}
Since $\tilde{\varphi}-\varphi$ is bounded on $\{\psi<-T_1\}$, we have
\begin{equation}\nonumber
	\int_{\{\psi<-t\}}|F_t|^2e^{-\varphi}c(-\psi)<+\infty.
\end{equation}
Then according to Lemma \ref{F_t}, for any $t_1,t_2\in [T_1,T_2]$, $t_1<t_2$, we have
\begin{flalign}\nonumber
	\begin{split}
		&G(t_1;\tilde{\varphi})-G(t_2;\tilde{\varphi})\\
		\geq&\int_{\{-t_2\leq\psi<-t_1\}}|F_{t_1}|^2e^{-\tilde{\varphi}}c(-\psi)\\
		\geq&(\inf_{\{-t_2\leq\psi<-t_1\}}e^{\varphi-\tilde{\varphi}})\int_{\{-t_2\leq\psi<-t_1\}}|F_{t_1}|^2e^{-\varphi}c(-\psi)\\
		=&(\inf_{\{-t_2\leq\psi<-t_1\}}e^{\varphi-\tilde{\varphi}})\times\\
		&\left(\int_{\{-t_2\leq\psi<-t_1\}}|F|^2e^{-\varphi}c(-\psi)+\int_{\{-t_2\leq\psi<-t_1\}}|F_{t_1}-F|^2e^{-\varphi}c(-\psi)\right)\\
		\geq&(\inf_{\{-t_2\leq\psi<-t_1\}}e^{\varphi-\tilde{\varphi}})\int_{\{-t_2\leq\psi<-t_1\}}|F|^2e^{-\varphi}c(-\psi).
	\end{split}
\end{flalign}
Then $\lim\limits_{t\rightarrow T_1+0}\sup\limits_{\{-t\leq \psi<-T_1\}}(\tilde{\varphi}-\varphi)=0$ implies that
\begin{flalign}\nonumber
	\begin{split}
		&\lim_{t\rightarrow T_1+0}\frac{G(T_1;\tilde{\varphi})-G(t;\tilde{\varphi})}{\int_{T_1}^tc(s)e^{-s}\mathrm{d}s}\\
		\geq&\lim_{t\rightarrow T_1+0}(\inf_{\{-t\leq\psi<-T_1\}}e^{\varphi-\tilde{\varphi}})\frac{\int_{\{-t\leq\psi<-T_1\}}|F|^2e^{-\varphi}c(-\psi)}{\int_{T_1}^tc(s)e^{-s}\mathrm{d}s}\\
		=&\frac{G(T_1;\varphi)-G(T_2;\varphi)}{\int_{T_1}^{T_2}c(s)e^{-s}\mathrm{d}s}\\
		>&\frac{G(T_1;\tilde{\varphi})-G(T_2;\tilde{\varphi})}{\int_{T_1}^{T_2}c(s)e^{-s}\mathrm{d}s},
	\end{split}
\end{flalign}
which contradicts to the concavity of $G(h^{-1}(r);\tilde{\varphi})$. It means that the assumption can not hold, i.e. $G(h^{-1}(r);\varphi)$ is  not linear with respect to $r$ on $ [\int_{T_2}^{+\infty}c(s)e^{-s}\mathrm{d}s$, $\int_{T_1}^{+\infty}c(s)e^{-s}\mathrm{d}s]$.

Especially, if $\varphi+\psi$ is strictly plurisubharmonic at $z_1\in \text{int}(\{-T_2\leq\psi<-T_1\})$, we can construct some $\tilde{\varphi}$ satisfying the five statements in Theorem \ref{sp1}. By the assumption, there is a small open neighborhood $(U, w)$ of $z_1$, such that $U\Subset \{-T_2\leq\psi<-T_1\}$ and $\sqrt{-1}\partial\bar{\partial}(\varphi+\psi)>\varepsilon \omega$ on $U$, where $w=(w_1,\ldots,w_n)$ is the local coordinate on $U$, $\omega=\sqrt{-1}\sum_{j=1}^n\mathrm{d}w_j\wedge \mathrm{d}\bar{w}_j$ on $U$. Let $\rho$ be a smooth nonnegative function on $M$ such that $\rho\not\equiv 0$ and $\text{Supp} \rho\Subset U$. Then we can choose a positive number $\delta$ such that
\begin{equation}\nonumber
	\sqrt{-1}\partial\bar{\partial}(\varphi+\psi+\delta\rho)\geq 0
\end{equation}
on $U$. Let $\tilde{\varphi}=\varphi+\delta\rho$, then it can be checked that $\tilde{\varphi}$ satisfies the five statements in Theorem \ref{sp1}. It implies that $G(h^{-1}(r))$ is not linear with respect to $r$ on $ [\int_{T_2}^{+\infty}c(s)e^{-s}\mathrm{d}s$, $\int_{T_1}^{+\infty}c(s)e^{-s}\mathrm{d}s]$.
\end{proof}

Now, we give the proof of Theorem \ref{sp2}.

\begin{proof}[Proof of Theorem \ref{sp2}]
	Let $\tilde{\varphi}=\varphi+\psi-\tilde{\psi}$, then $\tilde{\varphi}+\tilde{\psi}=\varphi+\psi$ is a plurisubharmonic function on $M$. Assume that $G(h^{-1}(r))$ is linear with respect to $r\in [\int_{T_2}^{+\infty}c(s)e^{-s}\mathrm{d}s,\int_{T_1}^{+\infty}c(s)e^{-s}\mathrm{d}s]$.
	
	 We can denote
	\begin{equation}\nonumber
				\tilde{c}(t)=\left\{
		\begin{array}{ll}
			c(T_1), & t\in[T_1,T_2], \\
			c(t), & t\in [T,+\infty]\setminus [T_1,T_2].
		\end{array}
		\right.
	\end{equation}	
	Then it is clear that $G(h^{-1}(r);\tilde{c})$ is also linear with respect to $r$ on $ [\int_{T_2}^{+\infty}c(s)e^{-s}\mathrm{d}s$, $\int_{T_1}^{+\infty}c(s)e^{-s}\mathrm{d}s]$ by Remark \ref{tildec}. Thus we may also assume that $c(t)e^{-t}$ is strictly decreasing on $[T_1,T_2]$ (note that $c(T_1)\leq c(T_2)e^{T_1-T_2}<c(T_2)$), and $c(t)$ is increasing on $[T_1,T_2]$.
	
	It follows from Theorem \ref{partiallylinear} that there exists a unique holomorphic $(n,0)$ form $F$ on $\{\psi<-T_1\}$ satisfying $(F-f)\in H^0(Z_0,(\mathcal{O}(K_M)\otimes\mathcal{F})|_{Z_0})$, and 
	\begin{equation}\nonumber
		G(t;\varphi,\psi)=\int_{\{\psi<-t\}}|F|^2e^{-\varphi}c(-\psi)
	\end{equation}
	for any $t\in [T_1,T_2]$. 
	
	According to the assumptions, we have $Z_0\subset\{\psi=-\infty\}=\{\tilde{\psi}=-\infty\}$. As $c(t)e^{-t}$ is decreasing and $\tilde{\psi}\geq\psi$, we have $e^{-\varphi}c(-\psi)=e^{-\varphi-\psi}e^{\psi}c(-\psi)\leq e^{-\tilde{\varphi}-\tilde{\psi}}e^{\tilde{\psi}}c(-\tilde{\psi})=e^{-\tilde{\varphi}}c(-\tilde{\psi})$. Then it follows from Theorem \ref{Concave} that $G(h^{-1}(r);\tilde{\varphi},\tilde{\psi})$ is concave with respect to $r$. We prove the following inequality:
	\begin{equation}\label{sp2-1}
		\lim_{t\rightarrow T_1+0}\frac{G(t;\tilde{\varphi},\tilde{\psi})-G(T_2;\tilde{\varphi},\tilde{\psi})}{\int_t^{T_2}c(s)e^{-s}\mathrm{d}s}>\frac{G(T_1;\varphi,\psi)-G(T_2;\varphi,\psi)}{\int_{T_1}^{T_2}c(s)e^{-s}\mathrm{d}s}.
	\end{equation}
	We just need to prove it for the case $G(T_1;\tilde{\varphi},\tilde{\psi})<+\infty$. By Lemma \ref{F_t}, there exists a holomorphic $(n,0)$ form $F_{T_1}$ such that $(F_{T_1}-f)\in H^0(Z_0,(\mathcal{O}(K_M)\otimes\mathcal{F})|_{Z_0})$, and 
	\begin{equation}\nonumber
		G(T_1;\tilde{\varphi},\tilde{\psi})=\int_{\{\psi<-T_1\}}|F_{T_1}|^2e^{-\tilde{\varphi}}c(-\tilde{\psi})\in (0,+\infty).
	\end{equation}	
	Since $\tilde{\psi}\geq\psi$ and $\tilde{\psi}\not\equiv\psi$ on the interior  of $\{-T_2\leq\psi<-T_1\}$, and both of them are plurisubharmonic on $M$, then there exists a subset $U$ of $\{-T_2\leq\psi<-T_1\}$ such that $\mu(U)>0$ and $e^{-\tilde{\psi}}<e^{-\psi}$ on $U$, where $\mu$ is the Lebesgue measure on $M$. As $F_{T_1}\not\equiv 0$,  $\tilde{\psi}=\psi$ on $\{\psi<-T_2\}$, and $c(t)e^{-t}$ is strictly decreasing on $[T_1,T_2]$, we have
	\begin{flalign}\nonumber
		\begin{split}
			&\frac{G(T_1;\tilde{\varphi},\tilde{\psi})-G(T_2;\tilde{\varphi},\tilde{\psi})}{\int_{T_1}^{T_2}c(s)e^{-s}\mathrm{d}s}\\
			=&\frac{\int_{\{\psi<-T_1\}}|F_{T_1}e^{-\tilde{\varphi}}c(-\tilde{\psi})-G(T_2;\varphi,\psi)}{\int_{T_1}^{T_2}c(s)e^{-s}\mathrm{d}s}\\
			>&\frac{\int_{\{\psi<-T_1\}}|F_{T_1}e^{-\varphi}c(-\psi)-G(T_2;\varphi,\psi)}{\int_{T_1}^{T_2}c(s)e^{-s}\mathrm{d}s}\\
			\geq&\frac{G(T_1;\varphi,\psi)-G(T_2;\varphi,\psi)}{\int_{T_1}^{T_2}c(s)e^{-s}\mathrm{d}s}.
		\end{split}
	\end{flalign}
	Thus we get inequality (\ref{sp2-1}).
	
	As $c(t)$ is increasing on $[T_1,T_2]$ and $\lim\limits_{t\rightarrow T_2-0}\sup\limits_{\{-T_2\leq \psi<-t\}}(\tilde{\psi}-\psi)=0$, we obtain that
	\begin{flalign}\label{sp2-2}
		\begin{split}
			&\lim_{t\rightarrow T_2-0}\frac{G(t;\tilde{\varphi},\tilde{\psi})-G(T_2;\tilde{\varphi},\tilde{\psi})}{\int_t^{T_2}c(s)e^{-s}\mathrm{d}s}\\
			=&\lim_{t\rightarrow T_2-0}\frac{G(t;\tilde{\varphi},\tilde{\psi})-G(T_2;\varphi,\psi)}{\int_t^{T_2}c(s)e^{-s}\mathrm{d}s}\\
			\leq&\lim_{t\rightarrow T_2-0}\frac{\int_{\{-T_2\leq\psi<-t\}}|F|^2e^{-\tilde{\varphi}}c(-\psi)}{\int_t^{T_2}c(s)e^{-s}\mathrm{d}s}\\
			\leq&\lim_{t\rightarrow T_2-0}\frac{\int_{\{-T_2\leq\psi<-t\}}|F|^2e^{-\varphi-\psi}e^{\tilde{\psi}}c(-\psi)}{\int_t^{T_2}c(s)e^{-s}\mathrm{d}s}\\
			\leq&\lim_{t\rightarrow T_2-0}(\sup_{\{-T_2\leq\psi<-t\}}e^{\tilde{\psi}-\psi})\frac{\int_{\{-T_2\leq\psi<-t\}}|F|^2e^{-\varphi}c(-\psi)}{\int_t^{T_2}c(s)e^{-s}\mathrm{d}s}\\
			=&\frac{\int_{\{-T_2\leq\psi<-T_1\}}|F|^2e^{-\varphi}c(-\psi)}{\int_{T_1}^{T_2}c(s)e^{-s}\mathrm{d}s}\\
			=&\frac{G(T_1;\varphi,\psi)-G(T_2;\varphi,\psi)}{\int_{T_1}^{T_2}c(s)e^{-s}\mathrm{d}s}.
		\end{split}
	\end{flalign}
	Combining (\ref{sp2-2}) with (\ref{sp2-1}), we have
	\begin{equation}\nonumber
		\lim_{t\rightarrow T_2-0}\frac{G(t;\tilde{\varphi},\tilde{\psi})-G(T_2;\tilde{\varphi},\tilde{\psi})}{\int_t^{T_2}c(s)e^{-s}\mathrm{d}s}<\lim_{t\rightarrow T_1+0}\frac{G(t;\tilde{\varphi},\tilde{\psi})-G(T_2;\tilde{\varphi},\tilde{\psi})}{\int_t^{T_2}c(s)e^{-s}\mathrm{d}s},
	\end{equation}
	which contradicts to the concavity of $G(h^{-1}(r);\tilde{\varphi},\tilde{\psi})$. It means that the assumptions can not hold, i.e. $G(h^{-1}(r);\varphi,\psi)$ is  not linear with respect to $r$ on $ [\int_{T_2}^{+\infty}c(s)e^{-s}\mathrm{d}s$, $\int_{T_1}^{+\infty}c(s)e^{-s}\mathrm{d}s]$.
	
	Especially, if $\psi$ is strictly plurisubharmonic at $z_1\in \text{int}(\{-T_2\leq\psi<-T_1\})$, we can construct some $\tilde{\psi}$ satisfying the four statements in Theorem \ref{sp2}. By the assumption, there is a small open neighborhood $(U, w)$ of $z_1$, such that $U\Subset \{-T_2\leq\psi<-T_1\}$ and $\sqrt{-1}\partial\bar{\partial}(\varphi+\psi)>\varepsilon \omega$ on $U$, where $w=(w_1,\ldots,w_n)$ is the local coordinate on $U$, $\omega=\sqrt{-1}\sum_{j=1}^n\mathrm{d}w_j\wedge \mathrm{d}\bar{w}_j$ on $U$. Let $\rho$ be a smooth nonnegative function on $M$ such that $\rho\not\equiv 0$ and $\text{Supp} \rho\Subset U$. Then we can choose a positive number $\delta$ such that
	\begin{equation}\nonumber
		\sqrt{-1}\partial\bar{\partial}(\varphi+\psi+\delta\rho)\geq 0
	\end{equation}
	 and $\psi+\delta\rho<-T_1$ on $U$. Let $\tilde{\psi}=\psi+\delta\rho$, then it can be checked that $\tilde{\psi}$ satisfies the four statements in Theorem \ref{sp2}. It implies that $G(h^{-1}(r))$ is not linear with respect to $r$ on $ [\int_{T_2}^{+\infty}c(s)e^{-s}\mathrm{d}s$, $\int_{T_1}^{+\infty}c(s)e^{-s}\mathrm{d}s]$.
\end{proof}

\section{Proof of Theorem \ref{thm:char}}\label{sec:proof of theorem char}

In this section, we prove Theorem \ref{thm:char}.

\begin{proof}
	[Proof of Theorem \ref{thm:char}]
	We proof Theorem \ref{thm:char} in two steps: Firstly, we prove the  sufficiency part of characterization; Secondly, we prove the necessity part of characterization.

	\emph{Step 1.}  Assume that the two statements in Theorem \ref{thm:char} hold. 
	
 Note that $\psi=G_{\Omega}(\cdot,z_0)+a$ and $\varphi=2\log|g|+2G_{\Omega}(\cdot,z_0)+2u$ on $\{\psi<2a\}=\{G_{\Omega}(\cdot,z_0)<a\}$. It follows from $\chi_{z_0}=\chi_{-u}$ that there is a holomorphic function $g_0$ on $\Omega$ such that $|g_0|=e^{u+G_{\Omega}(\cdot,z_0)}$, thus $$\chi_{a,z_0}=\chi_{a,-u},$$ where $\chi_{a,z_0}$ and $\chi_{a,-u}$ are the characters on $\{G_{\Omega}(\cdot,z_0)<a\}$ associated to $G_{\Omega}(\cdot,z_0)$ and $-u$ respectively. Denote that $\psi_1:=\psi-2a=G_{\Omega}(\cdot,z_0)-a$ (the Green function on $\{G_{\Omega}(\cdot,z_0)<a\}$) and $c_1(t):=c(t-2a)$. By Theorem \ref{thm:e2}, $G(h_1^{-1}(r);\psi_1,c_1)$ is linear on $[0,\int_0^{+\infty}c_1(t)e^{-t}dt]=[0,e^{-2a}\int_{-2a}^{+\infty}c(t)e^{-t}dt]$, where $h_1(t)=\int_t^{+\infty}c(se^{-s}ds)$. Note that 
 $$G(h_1^{-1}(r);\psi_1,c_1)=G(h^{-1}(re^{2a});\psi,c),$$
then we get that $G(h^{-1}(r);\psi,c)$ is linear on $[0,\int_{-2a}^{+\infty}c(t)e^{-t}dt]$.
	
Denote that $\psi_2:=2G_{\Omega}(\cdot,z_0)$ and $\varphi_2:=2\log|g|+2u+a$ on $\Omega$. By Theorem \ref{thm:e2}, $G(h^{-1}(r);\psi_2,\varphi_2)$ is linear on $[0,\int_0^{+\infty}c(t)e^{-t}dt]$. Denote that
$$F:=b_0gp_*(f_{u}df_{z_0}),$$
where $b_0$ is a constant such that $ord_{z_0}(F-f_0)>k$.
 Following from Remark \ref{r:e2}, we have
 \begin{equation}
 	\label{eq:230109a}
 	\begin{split}
 		G(t-2a;\psi,\varphi,c)&=G(t;\psi_1,\varphi,c_1)\\
 		&=\int_{\{\psi_1<-t\}}|F|^2e^{-\varphi}c_1(-\psi_1)\\
 		&=\int_{\{\psi<-t+2a\}}|F|^2e^{-\varphi}c(-\psi)
 	\end{split}
 \end{equation}
and 
\begin{equation}
\label{eq:230109b}
G(t;\psi_2,\varphi_2,c)=\int_{\{\psi_2<-t\}}|F|^2e^{-\varphi_2}c(-\psi_2)
\end{equation}
for any $t\ge0$. 

Let $t_1\in[0,-2a]$, and let $\tilde F$ be any holomorphic $(1,0)$ form on $\{\psi<-t_1\}$ satisfying $(\tilde F-f_0,z_0)\in(\mathcal{I}(\varphi+\psi)\otimes\mathcal{O}(K_{\Omega}))_{z_0}$ and $\int_{\{\psi<-t_1\}}|\tilde F|^2e^{-\varphi}c(-\psi)<+\infty$. As $c(t)e^{-t}$ is decreasing and $\psi_2\le\psi$, then $e^{-\varphi_2}c(-\psi_2)\le e^{-\varphi}c(-\psi)$. By Lemma \ref{F_t} and equality \eqref{eq:230109b}, we have 
\begin{displaymath}
	\begin{split}
		&\int_{\{\psi<-t_1\}}|\tilde F|^2e^{-\varphi_2}c(-\psi_2)\\
		=&\int_{\{\psi_2<-t_1\}}|\tilde F|^2e^{-\varphi_2}c(-\psi_2)\\
		=&\int_{\{\psi_2<-t_1\}}| F|^2e^{-\varphi_2}c(-\psi_2)+\int_{\{\psi_2<-t_1\}}|F-\tilde F|^2e^{-\varphi_2}c(-\psi_2)
	\end{split}
\end{displaymath}
and 
\begin{displaymath}
	\begin{split}
		&\int_{\{\psi<2a\}}|\tilde F|^2e^{-\varphi_2}c(-\psi_2)\\
		=&\int_{\{\psi_2<2a\}}|\tilde F|^2e^{-\varphi_2}c(-\psi_2)\\
		=&\int_{\{\psi_2<2a\}}| F|^2e^{-\varphi_2}c(-\psi_2)+\int_{\{\psi_2<2a\}}|F-\tilde F|^2e^{-\varphi_2}c(-\psi_2),
	\end{split}
\end{displaymath}
which shows that
\begin{equation}
	\nonumber
	\begin{split}
			\int_{\{2a\le\psi<-t_1\}}|\tilde F|e^{-\varphi}c(-\psi)&=\int_{\{2a\le\psi<-t_1\}}|\tilde F|e^{-\varphi}c(-\psi)\\
			&\ge\int_{\{2a\le\psi<-t_1\}}|F|^2e^{-\varphi}c(-\psi).
	\end{split}
\end{equation}
Equality \eqref{eq:230109a} implies that 
$$	\int_{\{\psi<2a\}}|\tilde F|e^{-\varphi}c(-\psi)\ge\int_{\{\psi<2a\}}|F|^2e^{-\varphi}c(-\psi).$$ 
Thus, we have 
\begin{equation}
	\nonumber
	\begin{split}
		G(t_1;\psi,\varphi,c)&=\int_{\{\psi<-t_1\}}|F|^2e^{-\varphi}c(-\psi)\\
		&=\int_{\{\psi<2a\}}|F|^2e^{-\varphi}c(-\psi)+\int_{\{2a\le\psi<-t_1\}}|F|^2e^{-\varphi}c(-\psi)\\
		&=\int_{\{\psi<2a\}}|F|^2e^{-\varphi}c(-\psi)+\int_{\{2a\le\psi_2<-t_1\}}|F|^2e^{-\varphi_2}c(-\psi_2)\\
		&=\int_{\{\psi<2a\}}|F|^2e^{-\varphi}c(-\psi)+G(t_1;\psi_2,\varphi_2,c)-G(-2a;\psi_2,\varphi_2,c)
	\end{split}
\end{equation}
for any $t_1\in[0,-2a]$. As $G(h^{-1}(r);\psi_2,\varphi_2,c)$ is linear on $[0,\int_0^{+\infty}c(t)e^{-t}dt]$, then $G(h^{-1}(r);\psi,\varphi,c)$ is linear on $[\int_{-2a}^{+\infty}c(t)e^{-t}dt,\int_0^{+\infty}c(t)e^{-t}dt]$.

\

\emph{Step 2.} Assume that $G(h^{-1}(r))$ is linear with respect to $r$ on $[0,\int_{-2a}^{+\infty}c(t)e^{-t}dt]$ and $[\int_{-2a}^{+\infty}c(t)e^{-t}dt,\int_0^{+\infty}c(t)e^{-t}dt]$.

By Theorem \ref{partiallylinear}, there exists a holomorphic $(1,0)$ form $F_1$ on $\Omega$ such that $(F_1-f_0,z_0)\in(\mathcal{I}(\varphi+\psi)\otimes\mathcal{O}(K_{\Omega}))_{z_0}$ and 
\begin{equation}
	\label{eq:0111a}
	G(t)=\int_{\{\psi<-t\}}|F_1|^2e^{-\varphi}c(-\psi)
\end{equation}
for any $t\ge0$.
Denote that 
$$h:=\frac{F_1}{p_*(df_{z_0})}$$
is a multi-valued meromorphic function on $\Omega$, and $|h|$ is single-valued. As $G(h^{-1}(r))$ is linear on $[0,\int_{-2a}^{+\infty}c(t)e^{-t}dt]$, it follows from Remark \ref{r:e2} that 
$$2\log|h|=\varphi_0+b_1$$
on $\{G_{\Omega}(\cdot,z_0)<a\}$, where $b_1$ is a constant. As $\varphi_0$ is subharmonic on $\Omega$, $h$ has no pole in $\{G_{\Omega}(\cdot,z_0)\le a\}$. Since $\{G_{\Omega}(\cdot,z_0)=a\}$ is a compact set, by using Lemma \ref{l:extra}, we know that there exists $a_1\in(a,0)$ such that $h$ has no zero point in $\{a<G_{\Omega}(\cdot,z_0)<a_1\}$. It follows from the Weierstrass Theorem on open Riemann surfaces (see \cite{OF81}), that there is a holomorphic function $g_1$ on $\Omega$ such that
 $$u_1:=\log|h|-\log|g_1|-\frac{b_1}{2}$$
 is harmonic on $\{G_{\Omega}(\cdot,z_0)<a_1\}$. Thus, $g_1$ has no zero point in $\{a<G_{\Omega}(\cdot,z_0)<a_1\}$, and 
 $$u_1=\frac{\varphi_0}{2}-\log|g_1|$$
 on $\{G_{\Omega}(\cdot,z_0)<a\}$. 
  Note that $2\log|h|=\varphi_0+b_1$ on $\{G_{\Omega}(\cdot,z_0)<a\}$ and $\{G_{\Omega}(\cdot,z_0)=a\}$ is a closed real analytic curve, it follows from Lemma \ref{l:zero point} that the Lelong number $v(dd^c\varphi_0,z)\ge2ord_{z}(h)=2ord_z(g_1)$ for any $z\in\{G_{\Omega}(\cdot,z_0)=a\}$, hence 
$$v_1:=\frac{\varphi_0}{2}-\log|g_1|$$ 
is subharmonic on $\{G_{\Omega}(\cdot,z_0)<a_1\}$. Note that
$v_1=u_1$
on $\{G_{\Omega}(\cdot,z_0)<a\}$, then 
$$v_1=u_1$$
on $\{G_{\Omega}(\cdot,z_0)\le a\}$ by Lemma \ref{l:harmonic subharmonic}.

As $G(h^{-1}(r))$ is linear on $[0,\int_{-2a}^{+\infty}c(t)e^{-t}dt]$, it follows from Theorem \ref{thm:e2} that 
$$ord_{z_0}(g_1)=ord_{z_0}(f_0)=ord_{z_0}(F_1).$$   Denote that $\psi_3:=2G_{\Omega}(\cdot,z_0)$ and $\varphi_3:=2\log|g_1|+2u_1+a$ on $\{G_{\Omega}(\cdot,z_0)<a_1\}$. Since $\frac{h}{g_1}dp_*(f_{z_0})$ is a single-value holomorphic $(1,0)$ form and $u_1=\log|\frac{h}{g_1}|-\frac{b_1}{2}$ on $\{G_{\Omega}(\cdot,z_0)<a_1\}$, we know that 
$$\chi_{a_1,-u_1}=\chi_{a_1,z_0}.$$ 
Following from Theorem \ref{thm:e2} and Remark \ref{r:e2}, $G(h^{-1}(r);\psi_3,\varphi_3)$ is linear  with respect to $r$ on $[0,\int_{-2a_1}^{+\infty}c(t)e^{-t}dt]$ and 
\begin{equation}
	\label{eq:0110a}
	G(t;\psi_3,\varphi_3)=\int_{\{\psi_3<-t\}}|F_1|^2e^{-\varphi_3}c(-\psi_3)
\end{equation}
for any $t\ge-2a_1$. Denote that $\tilde\varphi_3:=\varphi_0+a$ on $\Omega$, hence $\tilde\varphi_3=\varphi_3$ on $\{G_{\Omega}(\cdot,z_0)<a\}$ and $\tilde\varphi_3=2\log|g_1|+2v_1+a$ on $\{G_{\Omega}(\cdot,z_0)<a_1\}$. Theorem \ref{Concave} shows that $G(h^{-1}(r);\psi_3,\tilde\varphi_3)$ is concave on $[0,\int_0^{+\infty}c(t)e^{-t}dt]$. 

Let $t_1\in[0,-2a]$, and let $\tilde F$ be any holomorphic $(1,0)$ form on $\{\psi_3<-t_1\}=\{\psi<-t_1\}$ satisfying $ord_{z_0}(\tilde F-f_0)>ord_{z_0}(g_1)$ and 
$$\int_{\{\psi<-t_1\}}|\tilde F|^2e^{-\tilde\varphi_3}c(-\psi_3)<+\infty.$$
As $ord_{z_0}(\tilde F-F_1)>ord_{z_0}(g_1)$ and $c(t)e^{-t}$ is decreasing, it follows from Lemma \ref{l:G-compact} that there exists $t_2>t_1$ such that 
$$\int_{\{\psi_3<-t_2\}}|F_1-\tilde F|^2e^{-\varphi}c(-\psi)\le c(t_2)e^{-t_2}\int_{\{\psi_3<-t_2\}}|F_1-\tilde F|^2e^{-\varphi-\psi}<+\infty.$$
Note that $e^{-\varphi}c(-\psi)\le C_1e^{-\varphi_0+a}c(-2G_{\Omega}(\cdot,z_0))=C_1e^{-\tilde\varphi_3}c(-\psi_3)$ on $\{-t_2\le\psi_3<-t_1\}=\{-t_2\le 2G_{\Omega}(\cdot,z_0)<-t_1\}$, where $C_1>0$ is a constant. Then we have
\begin{equation}
	\nonumber
	\begin{split}
		&\int_{\{\psi<-t_1\}}|\tilde F|^2e^{-\varphi}c(-\psi)\\
		\le&\int_{\{\psi_3<-t_2\}}|\tilde F|^2e^{-\varphi}c(-\psi)+\int_{\{-t_2\le\psi_3<-t_1\}}|\tilde F|^2e^{-\varphi}c(-\psi)\\
		\le&2\int_{\{\psi_3<-t_2\}}|F_1-\tilde F|^2e^{-\varphi}c(-\psi)+\int_{\{\psi_3<-t_2\}}|F_1|^2e^{-\varphi}c(-\psi)\\
		&+C_1\int_{\{\psi_3<-t_2\}}|\tilde F|^2e^{-\tilde\varphi_3}c(-\psi_3)\\
		<&+\infty.
	\end{split}
\end{equation}
By Lemma \ref{F_t} and equality \eqref{eq:0111a}, we have 
\begin{displaymath}
	\begin{split}
		&\int_{\{\psi<-t_1\}}|\tilde F|^2e^{-\varphi}c(-\psi)\\
		=&\int_{\{\psi<-t_1\}}| F_1|^2e^{-\varphi}c(-\psi)+\int_{\{\psi<-t_1\}}|F_1-\tilde F|^2e^{-\varphi}c(-\psi)
	\end{split}
\end{displaymath}
and 
\begin{displaymath}
	\begin{split}
		&\int_{\{\psi<2a\}}|\tilde F|^2e^{-\varphi}c(-\psi)\\
		=&\int_{\{\psi<2a\}}| F_1|^2e^{-\varphi}c(-\psi)+\int_{\{\psi<2a\}}|F_1-\tilde F|^2e^{-\varphi}c(-\psi),
	\end{split}
\end{displaymath}
which shows that
\begin{equation}
	\nonumber
		\int_{\{2a\le\psi<-t_1\}}|\tilde F|e^{-\varphi}c(-\psi)\ge\int_{\{2a\le\psi<-t_1\}}|F_1|^2e^{-\varphi}c(-\psi).
\end{equation}
Combining equality \eqref{eq:0110a}, we have
\begin{equation}
	\nonumber
	\begin{split}
		&\int_{\{\psi_3<-t_1\}}|\tilde F|^2e^{-\tilde\varphi_3}c(-\psi_3)\\
		=&\int_{\{\psi_3<2a\}}|\tilde F|^2e^{-\varphi_3}c(-\psi_3)+\int_{\{2a\le\psi<-t_1\}}|\tilde F|^2e^{-\varphi}c(-\psi)\\
		\ge&\int_{\{\psi_3<2a\}}| F_1|^2e^{-\varphi_3}c(-\psi_3)+\int_{\{2a\le\psi<-t_1\}}| F_1|^2e^{-\varphi}c(-\psi)\\
		=&\int_{\{\psi_3<-t_1\}}|F_1|^2e^{-\tilde\varphi_3}c(-\psi_3)
	\end{split}
\end{equation}
for any $t_1\in[0,-2a]$.
Note that $G(t;\psi_3,\varphi_3)=	G(t;\psi_3,\tilde\varphi_3)$ for any $t\ge -2a$, then
\begin{equation}
	\label{eq:0111b}G(t;\psi_3,\tilde\varphi_3)=\int_{\{\psi_3<-t_1\}}|F_1|^2e^{-\tilde\varphi_3}c(-\psi_3)
\end{equation}
for any $t\ge0$. As $G(h^{-1}(r);\psi_3,\varphi_3)$ is linear on $[0,\int_{-2a}^{+\infty}c(t)e^{-t}dt]$ and $G(h^{-1}(r);\psi,\varphi)$ is linear on $[\int_{-2a}^{+\infty}c(t)e^{-t}dt,\int_0^{+\infty}c(t)e^{-t}dt]$, we get that $G(h^{-1}(r);\psi_3,\tilde\varphi_3)$ is linear on $[0,\int_{-2a}^{+\infty}c(t)e^{-t}dt]$ and $[\int_{-2a}^{+\infty}c(t)e^{-t}dt,\int_0^{+\infty}c(t)e^{-t}dt]$.

As  $G(h^{-1}(r);\psi_3,\varphi_3)$ is linear on $[0,\int_{-2a_1}^{+\infty}c(t)e^{-t}dt]$, we have
$$\lim_{s\rightarrow a+0}\frac{G(-2s;\psi_3,\varphi_3)-G(-2a;\psi_3,\varphi_3)}{\int_{-2s}^{+\infty}c(t)e^{-t}dt-\int_{-2a}^{+\infty}c(t)e^{-t}dt}=\lim_{s_1\rightarrow a-0}\frac{G(-2s_1;\psi_3,\varphi_3)-G(-2a;\psi_3,\varphi_3)}{\int_{-2s_1}^{+\infty}c(t)e^{-t}dt-\int_{-2a}^{+\infty}c(t)e^{-t}dt},$$
which shows that
\begin{equation}
	\label{eq:230111c}
	\begin{split}
			&\lim_{s\rightarrow a+0}\frac{\int_{\{2a\le\psi_3<2s\}}|p_*(df_{z_0})|^2e^{b_1-a}c(-\psi_3)}{\int_{-2s}^{-2a}c(t)e^{-t}dt}\\
			=&\lim_{s\rightarrow a+0}\frac{\int_{\{2a\le\psi_3<2s\}}|F_1|^2e^{-2u_1-2\log|g_1|-a}c(-\psi_3)}{\int_{-2s}^{-2a}c(t)e^{-t}dt}\\
			=&\lim_{s\rightarrow a+0}\frac{G(-2s;\psi_3,\varphi_3)-G(-2a;\psi_3,\varphi_3)}{\int_{-2s}^{+\infty}c(t)e^{-t}dt-\int_{-2a}^{+\infty}c(t)e^{-t}dt}\\
			=&\lim_{s_1\rightarrow a-0}\frac{G(-2s_1;\psi_3,\varphi_3)-G(-2a;\psi_3,\varphi_3)}{\int_{-2s_1}^{+\infty}c(t)e^{-t}dt-\int_{-2a}^{+\infty}c(t)e^{-t}dt}\\
			=&\lim_{s_1\rightarrow a-0}\frac{\int_{\{2s_1\le\psi_3<2a\}}|F_1|^2e^{-2u_1-2\log|g_1|-a}c(-\psi_3)}{\int_{-2a}^{-2s_1}c(t)e^{-t}dt}\\
			=&\lim_{s\rightarrow a-0}\frac{\int_{\{2s_1\le\psi_3<2a\}}|p_*(df_{z_0})|^2e^{b_1-a}c(-\psi_3)}{\int_{-2a}^{-2s_1}c(t)e^{-t}dt}.
	\end{split}
\end{equation}
For any $\epsilon>0$, as $\{G_{\Omega}(\cdot,z_0)=a\}$ is a compact set and $v_1=u_1$ on $\{G_{\Omega}(\cdot,z_0)\le a\}$, it follows from the upper Continuity of $v_1-u_1$ and Lemma \ref{l:extra} that there exists $a_2\in(a,a_1)$ such that 
\begin{equation}\label{eq:230111a}
	v_1-u_1<\epsilon
\end{equation}
on $\{a\le G_{\Omega}(\cdot,z_0)\le a_2\}$.
By equality \eqref{eq:0111b}, \eqref{eq:230111c} and inequality \eqref{eq:230111a}, 
\begin{equation}
	\nonumber
	\begin{split}
			&\lim_{s\rightarrow a+0}\frac{G(-2s;\psi_3,\tilde\varphi_3)-G(-2a;\psi_3,\tilde\varphi_3)}{\int_{-2s}^{+\infty}c(t)e^{-t}dt-\int_{-2a}^{+\infty}c(t)e^{-t}dt}\\
			=&\lim_{s\rightarrow a+0}\frac{\int_{\{2a\le\psi_3<2s\}}|F_1|^2e^{-2v_1-2\log|g_1|-a}c(-\psi_3)}{\int_{-2s}^{-2a}c(t)e^{-t}dt}\\
			=&\lim_{s\rightarrow a+0}\frac{\int_{\{2a\le\psi_3<2s\}}|p_*(df_{z_0})|^2e^{2u_1-2v_1+b_1-a}c(-\psi_3)}{\int_{-2s}^{-2a}c(t)e^{-t}dt}\\
			\ge&e^{-2\epsilon}\lim_{s\rightarrow a+0}\frac{\int_{\{2a\le\psi_3<2s\}}|p_*(df_{z_0})|^2e^{b_1-a}c(-\psi_3)}{\int_{-2s}^{-2a}c(t)e^{-t}dt}\\
			=&e^{-2\epsilon}\lim_{s_1\rightarrow a-0}\frac{G(-2s_1;\psi_3,\varphi_3)-G(-2a;\psi_3,\varphi_3)}{\int_{-2s_1}^{+\infty}c(t)e^{-t}dt-\int_{-2a}^{+\infty}c(t)e^{-t}dt},
	\end{split}
\end{equation}
which implies 
\begin{equation}
\nonumber\lim_{s\rightarrow a+0}\frac{G(-2s;\psi_3,\tilde\varphi_3)-G(-2a;\psi_3,\tilde\varphi_3)}{\int_{-2s}^{+\infty}c(t)e^{-t}dt-\int_{-2a}^{+\infty}c(t)e^{-t}dt}\ge \lim_{s_1\rightarrow a-0}\frac{G(-2s_1;\psi_3,\varphi_3)-G(-2a;\psi_3,\varphi_3)}{\int_{-2s_1}^{+\infty}c(t)e^{-t}dt-\int_{-2a}^{+\infty}c(t)e^{-t}dt}.
\end{equation}
As $G(h^{-1}(r);\psi_3,\tilde\varphi_3)$ is concave on $[0,\int_0^{+\infty}c(t)e^{-t}dt]$ and $G(t;\psi_3,\tilde\varphi_3)=G(t;\psi_3,\varphi_3)$ for any $t\ge-2a$, then we have
\begin{equation}
	\label{eq:01111a}
	\lim_{s\rightarrow a+0}\frac{G(-2s;\psi_3,\tilde\varphi_3)-G(-2a;\psi_3,\tilde\varphi_3)}{\int_{-2s}^{+\infty}c(t)e^{-t}dt-\int_{-2a}^{+\infty}c(t)e^{-t}dt}= \lim_{s_1\rightarrow a-0}\frac{G(-2s_1;\psi_3,\tilde\varphi_3)-G(-2a;\psi_3,\tilde\varphi_3)}{\int_{-2s_1}^{+\infty}c(t)e^{-t}dt-\int_{-2a}^{+\infty}c(t)e^{-t}dt}.
\end{equation}
As $G(h^{-1}(r);\psi_3,\tilde\varphi_3)$ is linear on $[0,\int_{-2a}^{+\infty}c(t)e^{-t}dt]$ and $[\int_{-2a}^{+\infty}c(t)e^{-t}dt,\int_0^{+\infty}c(t)e^{-t}dt]$, Equality \eqref{eq:01111a} deduces that $G(h^{-1}(r);\psi_3,\tilde\varphi_3)$ is linear on $[0,\int_{0}^{+\infty}c(t)e^{-t}dt]$. Then Theorem \ref{thm:e2} shows that the two statements in Theorem \ref{thm:char} hold.
\end{proof}

\section{Proofs of Theorem \ref{counterexample2}, Theorem \ref{counterexample} and Example \ref{e:counterexample3}}

\label{sec:p4}

In this section, we prove Theorem \ref{counterexample2}, Theorem \ref{counterexample} and Example \ref{e:counterexample3}.

\begin{proof}[Proof of Theorem \ref{counterexample2}]
	By Theorem \ref{thm:char} and Remark \ref{r:not linear}, $G(-\log r)$ is linear on $[0,e^{2a}]$ and $[e^{2a},+\infty)$, but  $G(-\log r)$ is not  linear on $[0,1]$.
	Following from Lemma \ref{l:notconvex}, we get that $-\log G(t)$	is not convex on $[0,+\infty)$.
\end{proof}

Now we prove Theorem \ref{counterexample}.
\begin{proof}[Proof of Theorem \ref{counterexample}]
	We prove Theorem \ref{counterexample} by contradiction: if not, then $-\log G_k(t)$ is convex on $[0,+\infty)$ for any $k\ge0$. Denote that 
	$$\tilde G_k(t)=G_k(2(k+1)t).$$
	By definition, 
	\begin{equation}
		\label{eq:1231c}\tilde G_k(t)\le \lambda(\{G_{\Omega}(\cdot,z_0)<-t\})
	\end{equation}
	holds for any $t\ge0$ and any $k\ge0$. Note that there exists $F_{k,t}\in\mathcal{O}(\{G_{\Omega}(\cdot,z_0)<-t\})$ such that $(F_{k,t}-1,z_0)\in\mathcal{I}(2(k+1)G_{\Omega}(\cdot,z_0))_{z_0}$ and 
	\begin{equation}
		\label{eq:1231b}\tilde G_k(t)=\int_{\{G_{\Omega}(\cdot,z_0)<-t\}}|F_{k,t}|^2.
	\end{equation}
	As $\tilde G_k(t)$ is increasing with respect to $k$ and $\lim_{t\rightarrow+\infty}\tilde G_k(t)\le \lambda(\{G_{\Omega}(\cdot,z_0)<-t\})$, we know that there exists a subsequence of $\{F_{k,t}\}_{k\ge0}$ denoted by $\{F_{k_l,t}\}_{l\ge0}$, which uniformly converges to a holomorphic function $F$ on $\{G_{\Omega}(\cdot,z_0)<-t\}$ on any compact subset of $\{G_{\Omega}(\cdot,z_0)<-t\}$. Since $(F_{k_l,t}-1,z_0)\in\mathcal{I}(2(k_l+1)G_{\Omega}(\cdot,z_0))_{z_0}$ and $\{G_{\Omega}(\cdot,z_0)<-t\}$ is connected, we know that $F\equiv1$. By Fatou's Lemma, inequality \eqref{eq:1231c} and equality \eqref{eq:1231b}, we have
	\begin{displaymath}
		\begin{split}
			\lambda(\{G_{\Omega}(\cdot,z_0)<-t\})&=\int_{\{G_{\Omega}(\cdot,z_0)<-t\}}1\\
			&=\int_{\{G_{\Omega}(\cdot,z_0)<-t\}}\lim_{l\rightarrow+\infty}|F_{k_l,t}|^2\\
			&\le\liminf_{l\rightarrow+\infty}\int_{\{G_{\Omega}(\cdot,z_0)<-t\}}|F_{k_l,t}|^2\\
			&=\liminf_{l\rightarrow+\infty}\tilde G_{k_l}(t)\\
			&\le \lambda(\{G_{\Omega}(\cdot,z_0)<-t\}),
		\end{split}
	\end{displaymath}
	which shows $\lambda(\{G_{\Omega}(\cdot,z_0)<-t\})=\lim_{t\rightarrow+\infty}\tilde G_k(t)$. As $-\log G_k(t)$ is convex on $[0,+\infty)$, we get that $-\log \lambda(\{G_{\Omega}(\cdot,z_0)<-t\})$ is convex on $[0,+\infty)$.
	
	By the concavity of $G_0(-\log r)$, 
	\begin{displaymath}
		\begin{split}
			G_0(2t)\ge e^{-2t}G_0(0),
		\end{split}
	\end{displaymath}  
	which implies that 
	\begin{equation}
		\label{eq:1231d}
		\begin{split}
			-2t-\log\lambda(\{G_{\Omega}(\cdot,z_0)<-t\})\le -2t-\log G_0(2t)\le-\log G_0(0)<+\infty.
		\end{split}
	\end{equation}
	As $-2t-\log\lambda(\{G_{\Omega}(\cdot,z_0)<-t\})$ is a convex function on $[0,+\infty)$,  inequality \eqref{eq:1231d} shows that $-2t-\log\lambda(\{G_{\Omega}(\cdot,z_0)<-t\})$ is decreasing on $[0,+\infty)$. Hence, $s(t):=e^{2t}\lambda(\{G_{\Omega}(\cdot,z_0)<-t\})$ is increasing on $[0,+\infty)$. Combining Lemma \ref{l:BZ}, we get that $s(t)$ is a constant function. Lemma \ref{l:BZ2} shows that $\Omega$ is a disc, which contradicts to the assumption in Theorem \ref{counterexample}. Thus, there exists $k\ge0$ such that $-\log G_k(t)$ is not convex on $[0,+\infty)$.
\end{proof}

Finally, we prove Example \ref{e:counterexample3}.
\begin{proof}[Proof of Example \ref{e:counterexample3}]
	Let $F$ be any holomorphic function on $\{\psi<-t\}$ satisfying $F^{(j)}(o)=j!a_j$, then $F=\sum_{0\le j\le k}a_jz^j+\sum_{j>k}b_jz^j$ on $\{\psi<-t\}$ (Taylor expansion). Thus, 
	\begin{equation}
		\nonumber
		\begin{split}
			\int_{\{\psi<-t\}}|F|^2&=\int_{\{2(k+1)\log|z|<-t\}}\bigg|\sum_{0\le j\le k}a_jz^j+\sum_{j>k}b_jz^j\bigg|^2\\
			&\ge \int_{\{2(k+1)\log|z|<-t\}}\bigg|\sum_{0\le j\le k}a_jz^j\bigg|^2\\
			&=\sum_{0\le j\le k}\frac{a_j\pi}{j+1}e^{-\frac{j+1}{k+1}t},
		\end{split}
	\end{equation}
	which shows that $G(t)=\sum_{0\le j\le k}\frac{a_j\pi}{j+1}e^{-\frac{j+1}{k+1}t}$ for any $t\ge0$.
	
	Denote that $h(t):=-\log G(t)$ on $[0,+\infty)$. Then we have
	\begin{displaymath}
		h''(t)=\frac{\left(\sum_{0\le j\le k}c_jd_je^{-d_jt}\right)^2-\left(\sum_{0\le j\le k}c_jd_j^2e^{-d_jt}\right)\left(\sum_{0\le j\le k}c_je^{-d_jt}\right)}{\left(\sum_{0\le j\le k}c_je^{-d_jt}\right)^2},
	\end{displaymath}
	where $c_j=\frac{a_j\pi}{j+1}$ and $d_j=\frac{j+1}{k+1}$.
	By Cauchy-Schwarz inequality, $h''\le0$. 
	Note that exist $j_1$ and $j_2$ satisfying $j_1\not=j_2$, $a_{j_1}\not=0$ and $a_{j_2}\not=0$. Since $\frac{c_jd_j^2e^{-d_jt}}{c_je^{-d_jt}}=d_j^2$ and $d_{j_1}\not=d_{j_2}$, then $h''(t)<0$ for any $t\ge0$. Thus, $-\log G(t)$ is concave on $[0,+\infty)$. 
\end{proof}


\vspace{.1in} {\em Acknowledgements}. The authors would like to thank Dr. Zhitong Mi for checking the manuscript. The second author was supported by National Key R\&D Program of China 2021YFA1003100 and NSFC-11825101. The third named author was supported by China Postdoctoral Science Foundation BX20230402 and 2023M743719.

\end{document}